\documentclass[preprint]{elsarticle}
\usepackage{amsmath, amssymb, amsthm, dsfont, bm, mathtools, cancel}

\usepackage{algorithm}
\usepackage{algpseudocode}

\usepackage{graphicx, epsfig, epstopdf}
\usepackage{caption}
\usepackage{subcaption}
\usepackage{lscape}
\usepackage{placeins}

\usepackage{booktabs}
\usepackage{multirow}
\usepackage[table]{xcolor}

\usepackage{endnotes}
\usepackage{setspace}
\usepackage{enumerate}
\usepackage[shortlabels]{enumitem}
\usepackage{ulem}
\usepackage{color}
\usepackage{dirtytalk}
\usepackage{xspace}
\usepackage{xcolor}

\biboptions{sort&compress} 

\usepackage[T1]{fontenc}

\usepackage[colorlinks]{hyperref}
\hypersetup{colorlinks=true,
  linkcolor=red,
  citecolor=blue,
  filecolor=blue,
  urlcolor=blue}
\usepackage{bookmark}

\usepackage[utf8]{inputenc}
\usepackage[toc,page,title,titletoc]{appendix}

\allowdisplaybreaks

\theoremstyle{definition}
\newtheorem{defin}{Definition}
\newtheorem{theorem}{Theorem}
\newtheorem{proposition}{Proposition}
\newtheorem{lemma}[proposition]{Lemma}
\newtheorem{corollary}[proposition]{Corollary}

\newtheorem{remark}{Remark}

\begin{document}
\begin{frontmatter}
\title{Wasserstein convergence rates for empirical measures of point processes}

\author[colorado]{Dongzhou~Huang}
\ead{dongzhou.huang@colostate.edu}

\author[colorado]{Tianyi~Jiang}
\ead{tianyi.jiang@colostate.edu}

\author[colorado]{Haonan~Wang}
\ead{haonan.wang@colostate.edu}

\affiliation[colorado]{organization={Department of Statistics, Colorado State University},
            addressline={851 Oval Dr.}, 
            city={Fort Collins},
            state={CO},
            postcode={80523}, 
            country={USA}}

\journal{Stochastic Processes and their Applications}

\begin{abstract}
In this paper, we establish sharp upper and lower bounds on the convergence rate of the empirical measures of point processes under the Wasserstein distance. To this end, we first introduce a new metric on the space of counting measures and, based on this metric, define a Wasserstein distance between point processes. We then employ it to study the convergence rate of the empirical measures of point processes, which serves as a natural tool for identifying the distribution of the underlying point process.
Furthermore, we derive concentration results. These theoretical results provide constructive tools for hypothesis testing and statistical inference for point processes. The applicability of our results is demonstrated through several practical examples.

\begin{keyword}
Campbell measure, Counting measure, Empirical measure, Wasserstein distance, Poisson process

\MSC[2020] Primary: 60F25 \sep 60G55 \sep Secondary: 62E17 \sep 62M09
\end{keyword}
\end{abstract}

\end{frontmatter}

\section{Introduction}
Point processes constitute a fundamental component of modern probabilistic theory and have been widely applied across diverse scientific fields. However, certain fundamental questions remain inadequately resolved. A central example is the problem of determining the distribution of a point process from observed realizations. The existing literature largely addresses this problem by attempting to recover the distribution through estimation of the rate function, employing both parametric and nonparametric approaches. For example, see \cite{CRONIE2018,Diggle1985, guan2015,guan2010,Heikkinen1998, vanLieshout2024, vanLieshout2012}. Such approaches, however, are inherently restrictive: they are applicable only to specific classes of point processes with deterministic rate functions. In contrast, for processes such as Cox processes and Hawkes processes, whose rate functions are themselves random or history-dependent, these methods may not be applicable.

In this paper, we propose an alternative approach to the problem of determining the distributions of point processes. Empirical measures have proven to be powerful tools for distributional identification, as exemplified by their use in the Kolmogorov–Smirnov and Cram\'{e}r–von Mises tests. Motivated by these successes, we investigate the empirical measures of point processes. The primary aim of this work is to establish analytical results concerning these empirical measures and, in turn, to provide theoretical guidance for determining the distributions of point processes.

Before proceeding to empirical measures, we first establish some preliminary concepts. In the study of stochastic processes, temporal point processes on $[0,1]$ are typically treated in two ways. The first and more common approach regards a point process as a collection of time-dependent random variables. The second approach views it as a random element taking values in the space of counting measures on $[0,1]$. Although the latter framework involves a more complex value space, it offers substantial analytical power and admits generalization beyond temporal point processes to abstract point processes. In light of these advantages, we adopt the second perspective.

When working with the value space, i.e., the space of counting measures, a challenging task is to define a rigorous and meaningful metric on this space. Several approaches have been proposed in the literature. For counting measures with the same number of points, often referred to as the cardinality, there is little disagreement: the standard practice is to assign equal weights to all points and to employ the Wasserstein distance for discrete measures. The main difficulty arises when comparing counting measures with different numbers of points, in which case the definition of a suitable distance is far from straightforward. Early works (e.g., \cite{Chen2004}, \cite{schuhmacher2005}) introduced a bounded metric that takes values in $[0,1]$, assigning the maximum distance $1$ whenever the cardinalities differ. This construction has one main drawback: it disregards discrepancies between measures beyond their cardinalities. To address this, Schuhmacher and Xia  \cite{Schuhmacher2007} proposed a refinement for the case of counting measures with $m$ and $n$ points ($m<n$). Specifically, $m$ points are selected from the $n$-point measure for a Wasserstein-type comparison, while the remaining $n-m$ points contribute an additional value proportional to $n-m$. The total distance is then given by the sum of these two components. However, this metric still fails to account for the precise locations of the extra points.

In this paper, we present a new metric for counting measures that incorporates all information regarding both the number of points and their locations. This metric is motivated by \cite{xiao2017wasserstein}, where a preliminary version of such a metric was introduced and shown to be useful in the context of generative point process models. However, the formulation in \cite{xiao2017wasserstein} fails to satisfy the positivity axiom, and hence, it is not a valid metric. As a remedy, we present an improved formulation that satisfies all metric axioms; this constitutes the first contribution of our work. More specifically, we augment the underlying metric space of the counting measures with an auxiliary point $s_{\alpha}$, where the distance between $s_{\alpha}$ and any other point in the space is at least $\alpha$. For two counting measures with $m$ and $n$ points (with $m<n$), we extend the $m$-point measure by adding $n-m$ copies of $s_{\alpha}$, thereby forming a new $n$-point measure. The distance between the original two measures is then defined as the Wasserstein distance between these two $n$-point measures. The details will be given in Section~\ref{sec: ANewMetric}.

With this new metric, point processes can be regarded as random elements taking values in a metric space, i.e., the space of counting measures endowed with our proposed metric. This allows us to define the Wasserstein distance between point processes in the standard way as described in Section~\ref{sec:WDforPointProcess}. We focus on the Wasserstein distance for two principal reasons: it captures the geometry of the underlying space and enables impactful practical applications. 
In this paper, we employ a Wasserstein-type distance in two distinct contexts, and it is useful to clarify the distinction. First, in Section~\ref{sec: ANewMetric}, we use the standard Wasserstein distance to define the distance between two counting measures with the same cardinality. Based on this, we construct a metric on the entire space of counting measures. Second, in Section~\ref{sec:WDforPointProcess}, we treat point processes as probability measures on this metric space of counting measures and, accordingly, define a second Wasserstein distance---now between probability measures on the counting measure space---thereby inducing a distance between point processes themselves. This latter distance constitutes the foundation of our analysis of point processes.

We now turn to the study of empirical measures of point processes. Let $\eta$ denote a point process with realizations $\eta_1, \ldots, \eta_n$. Its associated empirical measure is defined as the random measure that assigns mass $1/n$ to each realization. Given these realizations, one can compute the Wasserstein distance between the empirical measure and the distribution of the underlying process $\eta$. Moreover, we establish both upper and lower bounds for the expected Wasserstein distance between the empirical measure and the distribution of the target point process $\eta$. These bounds play an important role for inference problems involving point processes; particularly, they provide a rigorous criterion for quantifying the discrepancy between the empirical measure and $\eta$, thereby laying foundation for identifying the distribution that governs the realizations of the point process. This constitutes the principal contribution of the present work.

A substantial body of literature has investigated the expected Wasserstein distance, with relevant work including, but not limited to, \cite{fournier2015rate,dereich2013constructive,dudley1969speed,boissard2014mean,weed2019sharp,singh2018minimax,LeiJing2020Caco}. Focusing on measures over Euclidean spaces, \cite{fournier2015rate,dereich2013constructive} established upper bounds on the convergence rate of empirical measures to the underlying population measure under the Wasserstein distance of order $p$. 
For more general settings, Dudley \cite{dudley1969speed} was the first to derive an upper bound for the Wasserstein distance of order $1$ between probability measures defined on bounded Polish metric spaces. This analysis was later extended by Boissard and Le Gouic \cite{boissard2014mean} to arbitrary $p \geq 1$ on Polish metric spaces with finite Minkowski dimension, under additional moment assumptions. Weed and Bach \cite{weed2019sharp} further refined these results for compact metric spaces. However, these findings are not directly applicable to our setting, since the space of counting measures that we consider is both unbounded and infinite-dimensional.
The works most closely related to our study are \cite{LeiJing2020Caco,singh2018minimax}, which addressed the expected Wasserstein distance for measures over unbounded spaces. Lei \cite{LeiJing2020Caco} investigated an unbounded, infinite-dimensional Banach space and established sharp upper bounds under suitable moment conditions. Nevertheless, this result does not transfer directly to our setting, as the space of counting measures is not a Banach space. Singh and P\'{o}czos \cite{singh2018minimax} established minimax rates of empirical measures under the Wasserstein distance by partitioning potentially unbounded spaces into sequences of ``thick spherical shells.'' In our case, however, such shells lead to a complicated subspace, making the analysis intractable; thus, their results cannot be directly applied here.

Another line of research relevant to our work concerns Poisson process approximation and its subsequent extensions. This literature investigates the discrepancy between a given point process and a target Poisson process via Stein’s method; in the extended setting, the focus is on discrepancies between two general point processes. Relevant references include, but are not limited to, \cite{decreusefond2016functional,Barbour1988Stein,barbour1992Poisson,Brown1995Metrics,Chen2004,Schuhmacher2005Upper,Schuhmacher2009Stein}. This body of research differs from ours not only in its primary objects of study---since we analyze the empirical distribution of realizations of a point process rather than the behavior of a single given point process---but also in the analytical tools employed.

We now turn to the bounds for the aforementioned expected Wasserstein distance.
The derivation of the upper bound relies on two key ingredients: a truncation argument and a telescoping technique. The truncation argument reduces the value space, namely, the space of counting measures, by restricting attention to a subspace in which the number of points does not exceed a fixed constant. This restricted space is compact, which makes our analysis more tractable than in the original unbounded setting. The telescoping technique, inspired by \cite{LeiJing2020Caco}, decomposes complex terms into simpler components, and further refines the analysis and enabling sharper estimates. By combining these two tools, we derive a sharp upper bound. The full details are provided in Section \ref{sec:upperbound}. Furthermore, we demonstrate the applicability of our results to Poisson point processes and linear Hawkes processes.

It is worth noting that the proof of the lower bound differs in nature from that of the upper bound, and the lower bound itself possesses several advantages. In particular, it holds not only in expectation but also almost surely. Moreover, its analysis avoids the truncation and telescoping arguments required for establishing the upper bound. This simplicity, however, comes at a cost: we require a lower bound on the Minkowski-type dimension of the principal component of the support of the distribution of the underlying point process. To this end, we introduce the Janossy measure of the point process and impose a condition on this measure. Although this condition is somewhat restrictive, to the best of our knowledge, it represents the most suitable assumption currently available. In future work, we aim to relax this condition by developing alternative analytical approaches. We will present the details in Section \ref{sec:lowerbound}.

Finally, we establish concentration results for the Wasserstein distance between the empirical measure and the distribution of the underlying point process. These results provide guidance for the hypothesis tests involving point processes, particularly, the construction of corresponding rejection regions. In addition, we illustrate the applicability of our findings in several practical contexts, including the approximation of the Campbell measure and the analysis of generative point process models.

The rest of the paper is organized as follows. Section~\ref{sec: ANewMetric} introduces the new metric in detail and establishes its fundamental properties of the induced topology. Section~\ref{sec:WDforPointProcess} provides the formal definition of the Wasserstein distance for point processes under this metric. In Section \ref{sec:upperbound}, we develop upper bounds for the expected Wasserstein distance under two types of constraints: one concerning the Minkowski dimension of the underlying space, and the other involving a tail estimate for the point process. Section~\ref{sec:lowerbound} presents the corresponding lower bound, including conditions imposed on the Janossy measure. Finally, Sections~\ref{sec:concentration} and \ref{sec:applications} are devoted to concentration results and to applications in practical contexts, respectively.

\section{A new metric for finite counting measures}
\label{sec: ANewMetric}

Consider a compact metric space $(S,\rho)$. Let $\mathfrak{N}(S)$ (abbreviated as $\mathfrak{N}$ when the context is clear) denote the space of all finite counting measures on $(S, \rho)$. The vague topology $\mathcal{T}$ on $\mathfrak{N}(S)$ is defined as the smallest topology that renders the mappings $\mu \mapsto \int f \, d\mu$ continuous for all continuous, real-valued functions $f$ on $S$ with compact support. Since $S$ is compact, the vague topology coincides with the weak topology on $\mathfrak{N}(S)$, implying that vague convergence and weak convergence are equivalent under this setting. Furthermore, the $\sigma$-algebra $\mathcal{N}_{\mathcal{T}}$ on $\mathfrak{N}(S)$ is defined as the $\sigma$-algebra generated by the vague topology $\mathcal{T}$. Notably, this $\sigma$-algebra coincides with the smallest $\sigma$-algebra that makes the point count mappings $\mu \mapsto \mu(B)$ measurable or all Borel subsets $B \subseteq S$ (see, for example, \cite{kallenberg1986random}) in our setting.


We now introduce a new metric on $\mathfrak{N}(S)$, which is motivated by \cite{xiao2017wasserstein}. 
Consider two counting measures $\mu_{1} = \sum_{i=1}^{n}\delta_{x_{i}}$ and $\mu_{2} = \sum_{i=1}^{m}\delta_{y_{i}}$ in $\mathfrak{N}(S)$, where $\delta_{x}$ denotes the Dirac measure concentrated at $x$. When $n = m$, a natural choice of distance between $\mu_{1}$ and $\mu_{2}$ is given by their \textit{first-moment Wasserstein distance}: 
$$
D_{w}(\mu_{1},\mu_{2}) = \min_{\pi \in \Pi_{n}} \sum_{i=1}^{n}\rho(x_i,y_{\pi(i)}),
$$ 
where $|\mu_1| = |\mu_2| = n$ is the cardinality, and $\Pi_{n}$ is the collection of all permutations of ${1,\dots,n}$. 
It can be seen that the distance between $\mu_{1}$ and $\mu_{2}$ is determined by the pairwise distances between the two corresponding point sets in $S$, namely $\{x_{1}, \ldots, x_{n}\}$ and $\{y_{1}, \ldots, y_{n}\}$, through the quantities $\rho(x_{i}, y_{\pi(i)})$.

Generally, when $n \neq m$, $\mu_1$ and $\mu_2$ have different cardinality, defining a distance is nontrivial.  Here, without loss of generality, assume that $n\leq m$. Early studies such as \cite{Chen2004} and \cite{schuhmacher2005} addressed this issue by letting the distance take a pre-determined maximum value such as 1. In a subsequent work \cite{Schuhmacher2007}, the authors adopted the distance
$$
D_S(\mu_{1},\mu_{2}) = \frac{1}{m} \left[ \min_{\pi \in \Pi_{m}} \sum_{i=1}^{n}\rho(x_i,y_{\pi(i)}) + (m-n)\right],
$$
where $\rho$ is bounded up by 1 or scaled accordingly. As discussed in the introduction, this metric does not take the actual locations of the extra $(m-n)$ points into consideration. Most recently, \citep{xiao2017wasserstein} proposed a modified metric of the form 
$$
D_{0}(\mu_{1},\mu_{2})  \coloneqq \min_{\pi \in \Pi_{m}} \left[\sum_{i=1}^{n}\rho(x_i,y_{\pi(i)}) + \sum_{i=n+1}^{m}\rho(s,y_{\pi(i)}) \right],
$$
where $s$ is a limiting point on the boundary of $S$. However, $D_0$ is not a metric as it does not satisfy the positivity condition. For instance, let us consider the one-dimensional Euclidean space. Suppose $S=[0,T]$ and $\rho$ is the Euclidean distance, and take $s = T$. In this case, it is straightforward to verify that $D_0(\delta_{0}+\delta_{T},\delta_{0})  = 0$. 
To address this issue, we augment the space $S$ with a new point $s_{\alpha} \notin S$ satisfying $\inf_{s \in S}\rho(s,s_{\alpha}) = \alpha$. Note that the choice of $\alpha$ is fairly flexible, and it depends on $\rho$. This point $s_\alpha$ can be regarded as a limiting point at distance $\alpha$ from $S$. 

Now, we introduce a new metric between $\mu_1$ and $\mu_2$, for $n \leq m$,
\begin{equation}\label{eqn:metricD1}
D_{1}(\mu_{1},\mu_{2}) \coloneqq \min_{\pi \in \Pi_{m}}\left[\sum_{i=1}^{n}\rho(x_i,y_{\pi(i)}) + \sum_{i=n+1}^{m}\rho(s_{\alpha},y_{\pi(i)})  \right].
\end{equation}
Note that $D_1$ can be viewed as the Wasserstein distance $D_w$ between two counting measures $\mu_1^*$ and $\mu_2$, where $\mu_1^* = \sum_{i=1}^{m}\delta_{x_{i}}$ with $x_{n+1}=x_{n+2}=\cdots = x_m = s_{\alpha}$. That is, $\mu_1^*$ can be expressed as $ \mu^{*}_1 = \mu_1 + \sum_{i=n+1}^{m}\delta_{x_{i}}$. 
It is readily seen that  
\begin{equation}
\label{eqn:equaltyD1}
D_{1}(\mu_{1},\mu_{2}) = D_{w}(\mu^{*}_1,\mu_{2}). 
\end{equation}
One should note that $\mu^{*}_{1} \notin \mathfrak{N}(S)$, and for our mathematical convenience, we extend the definition of $D_{w}$ to $\mathfrak{N}(S \cup \{s_{\alpha}\})$.

In the following theorem, we establish that $D_1$ is indeed a metric  and present its upper bound.

\begin{theorem}
\label{thm:d1metric} 
$D_1$ is a proper metric and satisfies
$$
 D_{1} ( \mu_1, \mu_2) \leq \left( |\mu_1| \vee |\mu_2|  \right) \left( \mathrm{diam}(S) + \alpha \right). 
 $$

\end{theorem}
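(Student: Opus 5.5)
The plan is to exploit the representation \eqref{eqn:equaltyD1}: $D_1(\mu_1,\mu_2)=D_w(\mu_1^*,\mu_2)$, where $\mu_1^*$ pads the smaller measure with copies of $s_\alpha$. Throughout I view $\rho$ as a metric on the augmented space $S\cup\{s_\alpha\}$ with $\rho(s,s_\alpha)\ge\alpha$ for all $s\in S$. I also use that $D_w$, the optimal matching cost between equal-cardinality counting measures, is a metric on $\mathfrak{N}_k(S\cup\{s_\alpha\})$ for each $k$; this is the standard assignment-problem argument, where the triangle inequality follows by composing permutations. The key reformulation is that $D_1(\mu_1,\mu_2)=D_w(\mu_1+k_1\delta_{s_\alpha},\,\mu_2+k_2\delta_{s_\alpha})$ for \emph{any} common padding level $K\ge|\mu_1|\vee|\mu_2|$, with $k_i=K-|\mu_i|$.

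\textbf{Padding invariance.} I show that adding $\delta_{s_\alpha}$ to both measures does not change $D_w$. The inequality ``$\le$'' is immediate: match the two new copies of $s_\alpha$ to each other at cost $0$. For ``$\ge$'', take an optimal matching of the padded measures. If the two added copies are not matched to each other, say $s_\alpha\mapsto y$ and $x\mapsto s_\alpha$, then rematching $x\mapsto y$ and $s_\alpha\mapsto s_\alpha$ does not increase the cost, since $\rho(x,y)\le\rho(x,s_\alpha)+\rho(s_\alpha,y)$. Iterating this gives the reformulation. I expect this step to be the main (though modest) obstacle, because it is what makes the triangle inequality work for measures of three different cardinalities.

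\textbf{Metric axioms.} Symmetry is clear. For the triangle inequality, given $\mu_1,\mu_2,\mu_3$, pad all three to a common $K=\max_i|\mu_i|$. By the reformulation, each $D_1$ equals a $D_w$ between padded measures in $\mathfrak{N}_K(S\cup\{s_\alpha\})$, and the triangle inequality for $D_w$ transfers. For positivity, suppose $D_1(\mu_1,\mu_2)=0$ with $n\le m$. If $n<m$, some point $y\in S$ is matched to $s_\alpha$ at cost $\rho(s_\alpha,y)\ge\alpha>0$, which is a contradiction. So $n=m$, and then every matched pair coincides, hence $\mu_1=\mu_2$. This is exactly where the choice $s_\alpha\notin S$ with $\alpha>0$ fixes the defect of $D_0$.

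\textbf{Bound.} With $n\le m$, use the identity permutation in \eqref{eqn:metricD1}. Each of the $n$ terms satisfies $\rho(x_i,y_i)\le\mathrm{diam}(S)$. Each of the $m-n$ terms satisfies $\rho(s_\alpha,y_i)\le\rho(s_\alpha,s)+\rho(s,y_i)$ for any $s\in S$; taking the infimum over $s$ gives at most $\alpha+\mathrm{diam}(S)$. Summing the $m$ terms yields $D_1(\mu_1,\mu_2)\le m(\mathrm{diam}(S)+\alpha)$, which is the stated bound since $m=|\mu_1|\vee|\mu_2|$.
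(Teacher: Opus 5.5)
Your proof is correct and follows essentially the same route as the paper. The shared steps are: the swap argument showing that padding both measures with extra copies of $s_\alpha$ leaves $D_w$ unchanged, reduction of the triangle inequality to that of $D_w$ on $S\cup\{s_\alpha\}$, positivity from the cost $\ge\alpha$ of matching $s_\alpha$ to a point of $S$, and the identity permutation for the bound. Two of your choices are slightly cleaner than the paper's: padding all three measures to $K=\max_i|\mu_i|$ handles every ordering of the cardinalities at once (the paper only treats $n\le m\le l$ explicitly), and taking the infimum over $s\in S$ in the bound avoids invoking compactness to get a point $s^*$ with $\rho(s^*,s_\alpha)=\alpha$.
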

The proof is provided in Appendix \ref{appendix1}.

Note that such a distance can be simplified when $S = [0,T]$ and $\rho$ is the Euclidean distance. Here let $s_{\alpha} = T +\alpha $. Without loss of generality, we further assume that $x_1 \le x_2 \le \dots \le x_n$ and $y_1 \le y_2 \le \dots \le y_m$. 

For any counting measure $\mu$ with $|\mu|=m$, define $F_{\mu}(t) = \mu([0,t])/m$ for $t \in \mathbb{R}^{+}$. For $0 < u < 1$, let $F^{-1}_{\mu}(u) = \inf\{t \in \mathbb{R}^{+} : F_{\mu}(t) \ge u\}$ denote its inverse function. Note that $F_{\mu}$ and $F^{-1}_{\mu}$ coincide with the cumulative distribution function and the corresponding quantile function of the normalized measure $\mu/m$, respectively; hence, we refer to them by these names. It follows from the definitions that $F^{-1}_{\mu_1^{*}}(i/m) = x_i$ and $F^{-1}_{\mu_2}(i/m) = y_i$ for each $i = 1, \dots, m$. Under this setup, the distance $D_{1}(\mu_{1},\mu_{2})$ can be expressed as
\[
D_{1}(\mu_{1},\mu_{2}) = \min_{\pi \in \Pi_{m}}\sum_{i=1}^{m}\rho\!\big(F_{\mu^{*}_1}^{-1}(i/m), F_{\mu_2}^{-1}(\pi(i)/m)\big).
\]
Since $\rho$ is the Euclidean distance, the expression above can be further simplified
\[
D_{1}(\mu_{1},\mu_{2}) = \sum_{i=1}^{m}\big|x_i - y_i\big| = \sum_{i=1}^{m}\big|F_{\mu^{*}_1}^{-1}(i/m) - F_{\mu_2}^{-1}(i/m)\big| = m\int_{0}^{1}\!\big|F_{\mu^{*}_1}^{-1}(u) - F_{\mu_2}^{-1}(u)\big|\,du,
\]
where the last equality follows from a straightforward calculation since $F^{-1}_{\mu}$ is a step function. This integral form resembles the standard quantile-function expression of the Wasserstein distance (see, for example, \citep{Panaretos2019}). Further, by the well-known correspondence between cumulative distribution and quantile functions on $\mathbb{R}$, we have
\[
D_{1}(\mu_{1},\mu_{2}) = m\int_{-\infty}^{\infty}\!\big|F_{\mu^{*}_1}(x) - F_{\mu_2}(x)\big|\,dx = m\int_{0}^{\infty}\!\big|F_{\mu^{*}_1}(x) - F_{\mu_2}(x)\big|\,dx,
\]

For illustration purpose, we consider two counting measures $\mu_1 = \sum_{i=1}^{4}\delta_{x_i}$ and $\mu_2 = \sum_{i=1}^{5}\delta_{y_i}$ where $\{x_1, \ldots, x_4\}$ and $\{y_1, \ldots, y_5\}$ lie within the interval $[0, T]$ and $\rho$ is the Euclidean distance. The precise locations of the points are $\{x_1, \ldots, x_4\} = \{0.5, 1.0, 3.8, 7.2\}$, $\{y_1, \ldots, y_5\} = \{0.7, 2.0, 2.0, 5.0, 6.0\}$, and $T = 8.0$. Let $\alpha = 1$ and $s_{\alpha} = T + \alpha = 9.0$. Following the earlier construction, we have the augmented measure $\mu_1^{*} = \sum_{i=1}^{5}\delta_{x_i}$ where $x_5 = s_{\alpha}$. By definition, we have $D_1(\mu_1,\mu_2) = 8.2$. 

The left panel of Figure~\ref{fig:metrc_illustration} illustrates the functions $F_{\mu^{*}_1}$ and $F_{\mu_2}$, and the right panel illustrates the inverse functions $F_{\mu^{*}_1}^{-1}$ and $F_{\mu_2}^{-1}$. The shaded area in the left panel of Figure~\ref{fig:metrc_illustration} represents $\int_{0}^{\infty}\!\big|F_{\mu^{*}_1}(x) - F_{\mu_2}(x)\big|\,dx$, while the shaded area in the right panel represents $\int_{0}^{1}\!\big|F_{\mu^{*}_1}^{-1}(u) - F_{\mu_2}^{-1}(u)\big|\,du$. The areas of both regions are equal to~1.64 as expected, since the shaded area multiplied by $m=5$ equals $D_1(\mu_1,\mu_2)$ as demonstrated.
\begin{figure}[h!]
    \centering
    \includegraphics[width=0.7\linewidth]{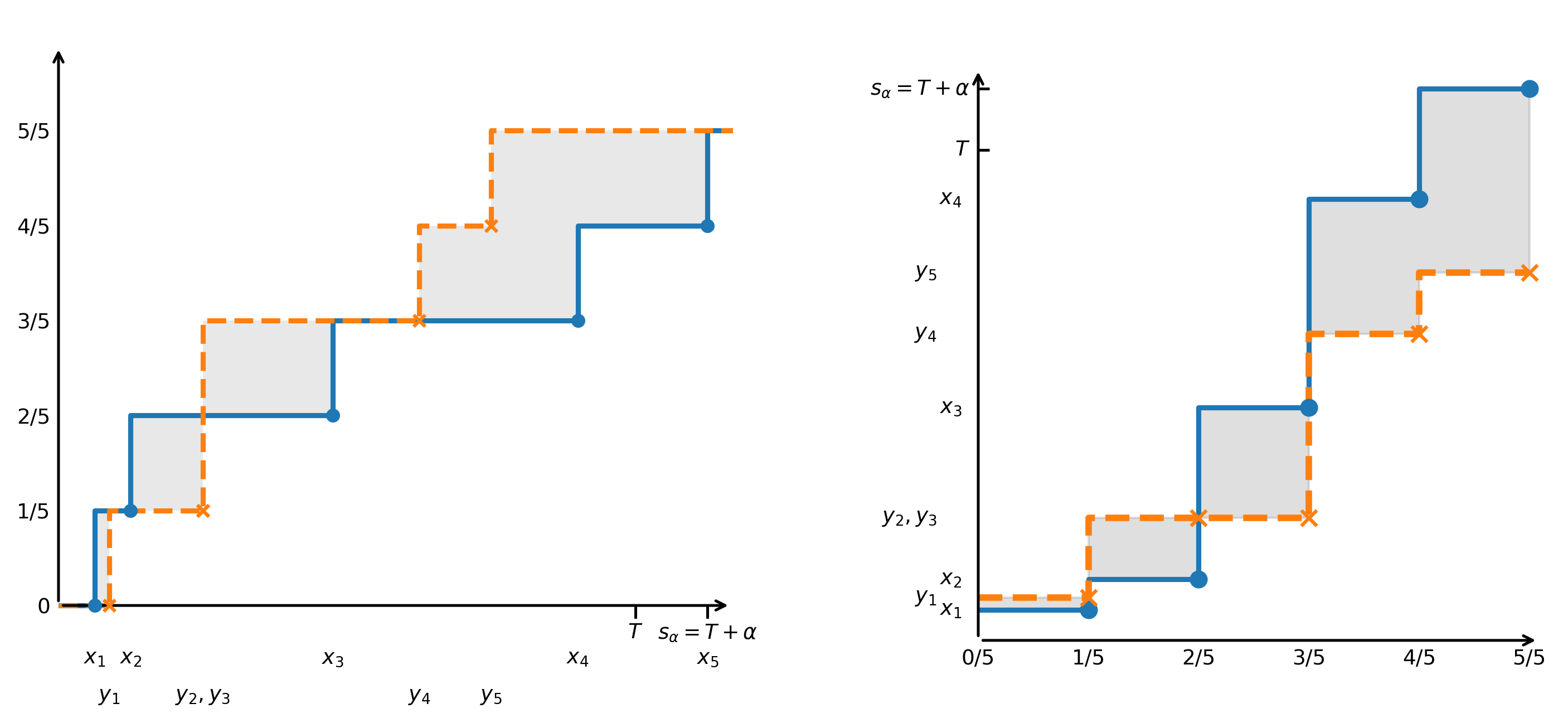}
    \caption{Left Panel: The dashed and solid lines are the cumulative function for $\mu_1^*$ and $\mu_2$ respectively. The shaded area equals $\int_{0}^{\infty}\!\big|F_{\mu^{*}_1}(x) - F_{\mu_2}(x)\big|\,dx$. Right Panel: The dashed and solid lines are the inverse function for $\mu_1^*$ and $\mu_2$ respectively. The shaded area equals $\int_{0}^{1}\!\big|F_{\mu^{*}_1}^{-1}(u) - F_{\mu_2}^{-1}(u)\big|\,du$.}
    \label{fig:metrc_illustration}
\end{figure}


Next, we proceed to establish key properties of the metric $D_1$, demonstrating that the metric space $(\mathfrak{N}(S),D_1)$ is $\sigma$-compact, complete, and separable. These are not merely convenient features but technically significant results that will be instrumental in the proofs to follow. For instance, the $\sigma$-compactness established in the following proposition plays an essential role in the telescoping argument used later in the analysis.

\begin{proposition}\label{topology1}
The following properties hold:
\begin{enumerate}[(i)]
\item The metric $D_1$ induces the vague topology $\mathcal{T}$ on $\mathfrak{N}(S)$. Furthermore, the $\sigma$-algebra $\mathcal{N}_{\mathcal{T}}$ coincides with the Borel $\sigma$-algebra generated by $D_1$.
\item For each $m \in \mathbb{N}^{+}$, the subspace $ (\mathfrak{N}(S) \cap \{\mu \in \mathfrak{N}(S): |\mu| \le m\}, D_1)$ is compact.
\item The metric space $(\mathfrak{N}(S), D_1)$ is complete and separable.
\end{enumerate}
\end{proposition}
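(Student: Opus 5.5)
The plan is to exploit the fact that cardinality is locally constant under $D_1$. Suppose $|\mu_1|=n<m=|\mu_2|$. Then any matching in \eqref{eqn:metricD1} pairs at least one point $y_j$ with $s_\alpha$, so $D_1(\mu_1,\mu_2)\ge \alpha$. Hence every $D_1$-ball of radius $<\alpha$ contains only measures of one cardinality. On each stratum $\mathfrak{N}_n=\{|\mu|=n\}$, $D_1$ coincides with $D_w$, which is the quotient metric of $(S^n,\rho_1)$ (with $\rho_1(x,y)=\sum_i\rho(x_i,y_i)$) under the action of the symmetric group $\Pi_n$. Concretely, $D_w(\sum\delta_{x_i},\sum\delta_{y_i})=\min_{\pi}\rho_1(x,y\circ\pi)$, and the quotient map $q_n:S^n\to\mathfrak{N}_n$ is $1$-Lipschitz. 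Thus $(\mathfrak{N},D_1)$ is a topological disjoint union $\bigsqcup_{n\ge0}\mathfrak{N}_n$, with each $\mathfrak{N}_n$ clopen and $\mathfrak{N}_0=\{0\}$ an isolated point.

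For (ii), the set $\{|\mu|\le m\}$ is the finite union $\bigcup_{n\le m} q_n(S^n)$. Each $q_n(S^n)$ is compact, being the continuous image of the compact space $S^n$. A finite union of compact sets is compact.

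For (iii), separability follows by taking a countable dense $C\subset S$, which exists since $S$ is compact metric. The measures $\sum_{i\le n}\delta_{c_i}$ with $c_i\in C$ form a countable set that is dense in each $\mathfrak{N}_n$, because $q_n$ is Lipschitz and $C^n$ is dense in $S^n$. For completeness, take a Cauchy sequence $(\mu_k)$. It is eventually within $\alpha/2$ of itself, so its cardinality is eventually some constant $n$. The tail then lies in the compact set $\mathfrak{N}_n$, so a subsequence converges, and hence the whole Cauchy sequence converges.

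For (i), I would show that on $\mathfrak{N}$, vague (equivalently weak, since $S$ is compact) convergence $\mu_k\to\mu$ is equivalent to $D_1(\mu_k,\mu)\to0$.

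\emph{($\Leftarrow$).} If $D_1(\mu_k,\mu)\to 0$, then eventually $|\mu_k|=|\mu|=n$. Choose optimal matchings so that $\mu_k=\sum\delta_{x^k_i}$ and $\mu=\sum\delta_{x_i}$ with $\sum_i\rho(x^k_i,x_i)\to0$. For continuous $f$, uniform continuity on compact $S$ then gives $\int f\,d\mu_k\to\int f\,d\mu$.

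\emph{($\Rightarrow$).} Testing $f\equiv1$ gives $|\mu_k|\to|\mu|=n$, so the cardinalities are eventually equal to $n$. Write $\mu_k=q_n(x^k)$ with $x^k\in S^n$. By compactness of $S^n$, every subsequence has a further subsequence with $x^k\to y$, so $\mu_k\to q_n(y)$ in $D_1$, and hence vaguely by the ($\Leftarrow$) direction. Uniqueness of weak limits forces $q_n(y)=\mu$. Every subsequence therefore has a further subsequence converging to $\mu$ in $D_1$, so $D_1(\mu_k,\mu)\to0$.

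The main obstacle is that $\mathcal{T}$ is a priori defined as a topology, not by sequences, so sequential equivalence must be upgraded to equality of topologies. I would handle this by noting that for finite counting measures on a compact metric $S$, the weak topology is metrizable, for example by the bounded-Lipschitz metric on finite measures. Both topologies are then first countable, and having the same convergent sequences implies they coincide. With $\mathcal{T}$ equal to the $D_1$-topology, the generated $\sigma$-algebra $\mathcal{N}_{\mathcal{T}}$ equals the Borel $\sigma$-algebra of $D_1$ by definition.
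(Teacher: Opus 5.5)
Your proof is correct, and it reaches (i) and the separability half of (iii) by a different route from the paper.

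For (ii) and the completeness half of (iii) you follow essentially the paper's line. Cardinality is locally constant because $D_1\ge\alpha$ across strata, and compactness of $S^n$ is pushed through the $1$-Lipschitz symmetrization map. The paper phrases (ii) as a sequential-compactness argument and dismisses completeness as ``similar to (ii)''; your appeal to compactness of $\mathfrak{N}_n$ makes that step precise.

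The real difference is in (i) and in separability.
\begin{itemize}
\item The paper routes (i) through Schuhmacher's relative Wasserstein distance $\tilde{D}_w$. It notes that $D_1(\mu_n,\mu)\to 0$ iff $\tilde{D}_w(\mu_n,\mu)\to 0$, since small $D_1$ again forces equal cardinalities. It then imports the theorem that $\tilde{D}_w$-convergence is equivalent to vague convergence.
\item For separability, the paper imports Polishness of the vague topology (Xia) and transfers it via (i).
\item You prove the equivalence in (i) from scratch: uniform continuity gives one direction, and testing $f\equiv 1$ plus a subsequence/compactness argument with uniqueness of weak limits gives the other.
\item You also build a countable dense set directly from a countable dense subset of $S$.
\end{itemize}

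Your version is self-contained and uses compactness of $S$ exactly where it is available. It also explicitly handles the passage from ``same convergent sequences'' to ``same topology''. The paper's sequence-level argument leaves that step implicit, relying on the metrizability built into the cited results.

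The one external fact you still lean on is metrizability of the weak topology on finite measures over compact $S$. You can check this directly with the metric $\sum_j 2^{-j}\min\bigl(1,|\int f_j\,d\mu-\int f_j\,d\nu|\bigr)$, where $\{f_j\}\subset C(S)$ is countable and dense and contains $f\equiv 1$.

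In exchange, the paper's route is shorter and ties $D_1$ to an established metric in the point-process literature.
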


The proof is provided in Appendix \ref{appendix1}.

\section{Wasserstein distance for point processes under the new metric} 
\label{sec:WDforPointProcess}

In Section~\ref{sec: ANewMetric}, we propose a new distance $D_1$ between counting measures and study its topological properties. Here, our next objective is to use it as a building block to quantify the discrepancy between point processes. Given a probability space $(\Omega, \mathcal{F}, P)$, a point process can be viewed as a random counting measure from $\Omega$ to $\mathfrak{N}$. This allows us to define a Wasserstein-type distance between two point processes. 

Let $\mathcal{P}(\mathfrak{N})$ denote the space of Borel probability measures on $(\mathfrak{N}, \mathcal{N}_{\mathcal{T}})$. Since a point process is a random element taking values in $\mathfrak{N}$, it can naturally be identified with an element of $\mathcal{P}(\mathfrak{N})$. For $Q_1, Q_2 \in \mathcal{P}(\mathfrak{N})$ and $p \in [1,\infty)$, the Wasserstein distance of order $p$ between $Q_1$ and $Q_2$, equipped with $D_1$, is given by
\begin{align*}
W_p(Q_1, Q_2) &=  \left(\inf_{\gamma \in \Upsilon(Q_1,Q_2)} \int_{\mathfrak{N} \times \mathfrak{N} } D_1^p (\mu,\nu) d\gamma(\mu,\nu)  \right)^{\frac{1}{p}}  \\
&= \left( \inf_{\gamma \in \Upsilon(Q_1,Q_2)} E_{\mu,\nu \sim \gamma} \left[ D_1^p (\mu,\nu) \right] \right)^{ \frac{1}{p} },
\end{align*}
where $\Upsilon (Q_1, Q_2)$ denotes the collections of measures on $ \mathfrak{N} \times \mathfrak{N} $ whose marginal measures are $Q_1$ and $Q_2$, respectively. It is a standard result that $W_p$, as defined above, is a valid metric (see, for instance, \cite[Chapter 7]{villani2021topics}). Moreover, if $\eta_1, \eta_2: \Omega \rightarrow \mathfrak{N}$ are two point processes, and let $\mathcal{L}(\eta_1),\mathcal{L}(\eta_2)$ denote the distributions (laws) of $\eta_1,\eta_2$, then we have
\begin{equation*}
W_p(\mathcal{L}(\eta_1),\mathcal{L}(\eta_2)) = \left( \inf_{\gamma \in \Upsilon(\mathcal{L}(\eta_1),\mathcal{L}(\eta_2))} \int_{\mathfrak{N} \times \mathfrak{N} } D_1^p (\mu,\nu) d\gamma(\mu,\nu) \right)^{\frac{1}{p}}.
\end{equation*}

The following proposition characterizes the relationship between convergence with respect to the distance $W_p$ and weak convergence on $\mathcal{P}(\mathfrak{N})$.

\begin{proposition}\label{topology2} 
The convergence $W_p^p(\mathcal{L}(\eta_n),\mathcal{L}(\eta)) \rightarrow 0$ implies $\eta_n$ converges weakly (or, in distribution) to $\eta$, i.e., $E[f(\eta_n)] \rightarrow E[f(\eta)]$ for every continuous and bounded $f: (\mathfrak{N},\mathcal{T}) \rightarrow \mathbb{R}$. Moreover, assume that $\sup_{n}\sum_{i=0}^{\infty} i^{p_{1}} P(|\eta_{n}| = i) < \infty$ for some $p_1 > p$, then the converse holds. 
\end{proposition}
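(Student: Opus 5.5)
We reduce the statement to the standard theory of Wasserstein distances on Polish spaces, with $(\mathfrak{N}, D_1)$ as the base space. By Proposition~\ref{topology1}(iii), $(\mathfrak{N},D_1)$ is complete and separable. By Proposition~\ref{topology1}(i), $D_1$ induces the vague topology $\mathcal{T}$ and its Borel $\sigma$-algebra is $\mathcal{N}_{\mathcal{T}}$. So continuous bounded functions on $(\mathfrak{N},\mathcal{T})$ are exactly the continuous bounded functions on $(\mathfrak{N},D_1)$, and weak convergence in $\mathcal{P}(\mathfrak{N})$ is ordinary weak convergence of probability measures on a Polish metric space. We will use the classical characterization (e.g.\ \cite[Theorem 7.12]{villani2021topics}): $W_p(Q_n,Q)\to 0$ if and only if $Q_n\to Q$ weakly and, for some (equivalently any) fixed $\mu_0$, $\int D_1^p(\mu_0,\mu)\,dQ_n(\mu)\to\int D_1^p(\mu_0,\mu)\,dQ(\mu)$, or equivalently the $p$-th moments are uniformly integrable.

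\textbf{Forward direction.} This is the ``only if'' part of the characterization. For a self-contained argument, we would first treat bounded Lipschitz $f$. Take an optimal coupling $\gamma_n$, which exists since the space is Polish. Then
\[
|E f(\eta_n)-E f(\eta)|\le \|f\|_{\mathrm{Lip}}\, E_{\gamma_n} D_1 \le \|f\|_{\mathrm{Lip}}\, W_p .
\]
Convergence for all bounded Lipschitz $f$ gives weak convergence by the portmanteau theorem.

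\textbf{Converse.} Take $\mu_0$ to be the zero measure. Padding $\mu_0$ with $|\mu|$ copies of $s_\alpha$ and applying Theorem~\ref{thm:d1metric} gives
\[
\alpha|\mu|\le D_1(\mu_0,\mu)\le |\mu|(\mathrm{diam}(S)+\alpha).
\]
Hence
\[
\sup_n E\, D_1^{p_1}(\mu_0,\eta_n)\le (\mathrm{diam}(S)+\alpha)^{p_1}\sup_n\sum_i i^{p_1}P(|\eta_n|=i)<\infty .
\]
Since $p_1>p$, de la Vallée-Poussin's criterion gives uniform integrability of $D_1^p(\mu_0,\eta_n)$: explicitly, $E[D_1^p\mathbf 1\{D_1>R\}]\le R^{p-p_1}\sup_n E D_1^{p_1}\to 0$ as $R\to\infty$. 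The limit also has finite $p$-th moment. By Fatou's lemma along weak convergence (the map $\mu\mapsto|\mu|=\mu(S)$ is vague-continuous since $S$ is compact), $E|\eta|^{p_1}<\infty$. Weak convergence together with uniform integrability of the $p$-th moments gives convergence of the $p$-th moments, and the characterization then yields $W_p\to 0$.

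The main obstacle is the converse if we prove it directly rather than citing Villani. The plan is then as follows. Use Skorokhod representation on the Polish space $(\mathfrak{N},D_1)$ to get $\tilde\eta_n\to\tilde\eta$ almost surely in $D_1$. Then
\[
W_p^p\le E D_1^p(\tilde\eta_n,\tilde\eta)\le 2^{p-1}\bigl(E D_1^p(\tilde\eta_n,\mu_0)+E D_1^p(\tilde\eta,\mu_0)\bigr).
\]
The right-hand side is uniformly integrable, so Vitali's theorem gives $E D_1^p(\tilde\eta_n,\tilde\eta)\to0$.
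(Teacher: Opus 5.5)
Your proposal is correct and follows essentially the same route as the paper. The paper bounds $\sup_n E\,D_1^{p_1}(\mu_0,\eta_n)$ via Theorem~\ref{thm:d1metric} and the $p_1$-moment assumption, deduces uniform integrability of the $p$-th moments, and then invokes the standard characterization of $W_p$-convergence on a complete separable metric space; it cites Proposition 7.1.5 of \citep{GradientFlows2008} where you cite Villani's Theorem 7.12. Your choice $\mu_0=\mathbf{0}$, the self-contained Lipschitz argument for the forward direction, and the explicit Fatou check that $\mathcal{L}(\eta)\in\mathcal{P}_p(\mathfrak{N})$ fill in details the paper leaves implicit.
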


\begin{proof}
The first statement is a well-known result, since $\mathfrak{N}$ is a separable metric space. A proof can be found, for example, in Proposition 7.1.5 of \citep{GradientFlows2008}. 

For the second statement, fix $\mu_0 \in \mathfrak{N}$ with $|\mu_0| = m$. Then, by Theorem \ref{thm:d1metric} and the assumption $\sup_{n}\sum_{i=0}^{\infty} i^{p_{1}} P(|\eta_{n}| = i) < \infty$, we have
\begin{eqnarray*}
&&\sup_{n}\int_{\mathfrak{N}}D_1^{p_1}(\mu,\mu_0)d \mathcal{L}(\eta_{n})(\mu) = \sup_{n}\sum_{i=0}^{\infty} \int_{\{\mu: |\mu| = i\}}D_1^{p_1}(\mu,\mu_0)d \mathcal{L}(\eta_{n})(\mu)\\
&=& \sup_{n} \left( \sum_{i=0}^{m} \int_{\{\mu: |\mu| = i\}}D_1^{p_1}(\mu,\mu_0)d \mathcal{L}(\eta_{n})(\mu) + \sum_{i=m+1}^{\infty} \int_{\{\mu: |\mu| = i\}}D_1^{p_1}(\mu,\mu_0)d \mathcal{L}(\eta_{n})(\mu) \right) \\
&\leq& \sup_{n} \left( \sum_{i=0}^{m} \int_{\{\mu: |\mu| = i\}}m^{p_1} (\mathrm{diam}(S) + \alpha)^{p_1} d \mathcal{L}(\eta_{n})(\mu) + \sum_{i=m+1}^{\infty} \int_{\{\mu: |\mu| = i\}}i^{p_1} (\mathrm{diam}(S) + \alpha)^{p_1} d \mathcal{L}(\eta_{n})(\mu)  \right)\\
&=& \sup_{n} \left( m^{p_1} (\mathrm{diam}(S) + \alpha)^{p_1} \sum_{i=0}^{m}  P(|\eta_{n}| = i) + \sum_{i=m+1}^{\infty} i^{p_1} (\mathrm{diam}(S) + \alpha)^{p_1} P(|\eta_{n}| = i)  \right)\\
&\leq& \sup_{n}  \left(  m^{p_1} (\mathrm{diam}(S) + \alpha)^{p_1} + (\mathrm{diam}(S) + \alpha)^{p_1} \sum_{i=0}^{\infty} i^{p_1} P(|\eta_{n}| = i)  \right)\\
&=& m^{p_1} (\mathrm{diam}(S) + \alpha)^{p_1}  + (\mathrm{diam}(S) + \alpha)^{p_1}\sup_{n} \sum_{i=0}^{\infty} i^{p_1} P(|\eta_{n}| = i) \\
&<& \infty.
\end{eqnarray*}
This implies that $\{\mathcal{L}(\eta_n): n = 1,2,\dots\}$ has uniformly integrable $p$-moments. By Proposition 7.1.5 of \citep{GradientFlows2008}, the proof is concluded.
\end{proof}

Let $\mathcal{P}_p(\mathfrak{N})$ be the collection of probability measures $\mathcal{L}(\eta)$ with $p$-th finite moment such that $\int_{\mathfrak{N}} D_{1}^{p}(\mu, \mu_0) \, d \mathcal{L}(\eta)(\mu) < \infty $ for some (and hence all) $\mu_0 \in \mathfrak{N}$. Then, we have $\mathcal{L}(\eta) \in \mathcal{P}_p(\mathfrak{N})$ if and only if 
\begin{equation} \label{pp_space}
\sum_{i=0}^{\infty} i^{p} P(|\eta| = i) < \infty.    
\end{equation}
The if part is a byproduct of the proposition above, and the only if part follows from
\begin{eqnarray*}
&&\int_{\mathfrak{N}}D_1^{p}(\mu,\mu_0)d \mathcal{L}(\eta)(\mu) = \sum_{i=0}^{\infty} \int_{\{\mu: |\mu| = i\}}D_1^{p}(\mu,\mu_0)d \mathcal{L}(\eta)(\mu)\\
&=&  \left( \sum_{i=0}^{m} \int_{\{\mu: |\mu| = i\}}D_1^{p}(\mu,\mu_0)d \mathcal{L}(\eta)(\mu) + \sum_{i=m+1}^{\infty} \int_{\{\mu: |\mu| = i\}}D_1^{p}(\mu,\mu_0)d \mathcal{L}(\eta)(\mu) \right) \\
&\geq& \sum_{i=m+1}^{\infty} \int_{\{\mu: |\mu| = i\}}D_1^{p}(\mu,\mu_0)d \mathcal{L}(\eta)(\mu) \geq \sum_{i=m+1}^{\infty} \int_{\{\mu: |\mu| = i\}}(i-m)^{p} \alpha^{p}d \mathcal{L}(\eta)(\mu)\\
&=& \alpha^{p} \sum_{i=m+1}^{\infty} (i-m)^{p}  P(|\eta| = i),
\end{eqnarray*}
and the fact that $(i-m)^{p} \geq C \, i^{p}$ for all $i \geq m+1$ and for some constant $C$.

\section{Upper bounds for the Wasserstein distance} \label{sec:upperbound}

In this section, we study the convergence of empirical measures derived from samples of point processes. Let $\eta_1, \dots, \eta_n$ be $n$ independent and identically distributed (i.i.d.) realizations of a finite point process $\eta$. The corresponding empirical measure is defined by
\begin{equation*}
 \widehat{\mathcal{L}}_{n}(\eta)   := \frac{1}{n} \sum_{i=1}^{n} \delta_{\eta_i},
\end{equation*}
which assigns equal mass $1/n$ to each observed sample $\eta_i$, for $i = 1, \dots, n$. Indeed, $\widehat{\mathcal{L}}_{n}(\eta)$ is a random (probability) measure defined on the measurable space $(\mathfrak{N}, \mathcal{N}_{\mathcal{T}})$, and thus can be viewed as a random element taking values in the space of probability measures $\mathcal{P}(\mathfrak{N})$. Furthermore, $\widehat{\mathcal{L}}_{n}(\eta)$ serves as a natural estimator of the distribution (law) of $\eta$.

Our goal is to study the convergence behavior of the empirical measure $\widehat{\mathcal{L}}_{n}(\eta)$ toward the distribution $\mathcal{L}(\eta)$ of $\eta$. By Varadarajan’s Theorem (see \cite{varadarajan1958convergence} or \cite[Theorem 11.4.1]{Dudley_2002}) and Proposition~\ref{topology1}, it can be seen that $\widehat{\mathcal{L}}_{n}(\eta)$, viewed as a random element in the space $\mathcal{P}(\mathfrak{N})$, converges almost surely to $\mathcal{L}(\eta)$ with respect to both the vague and weak topologies.

In this paper, we go beyond this qualitative result by quantifying the rate of convergence in terms of the Wasserstein distance introduced in Section~\ref{sec:WDforPointProcess}. Specifically, we aim to establish an upper bound for
\begin{equation*}
    E \left[ W_p \left( \widehat{\mathcal{L}}_{n}(\eta), \mathcal{L}(\eta) \right) \right].
\end{equation*}

\subsection{Covering numbers}
The purpose of this subsection is to introduce a notion of covering number, which will be applied in bounding above the rates of convergence of the empirical measures for samples of point processes.

\begin{defin}
Let $(X, r)$ be a metric space, where $r$ denotes the metric on $X$. For any $x \in X$ and $\varepsilon > 0$, we define the open ball of radius $\varepsilon/2$ centered at $x$ as $B_{r}(x, \varepsilon/2) := \{y \in X : r(x, y) < \varepsilon/2\}$, which has diameter $\varepsilon$. Similarly, we denote by $\overline{B_{r}(x, \varepsilon/2)} := \{y \in X : r(x, y) \leq \varepsilon/2\}$ the corresponding closed ball centered at $x$ with the same diameter. 
For any subset $Y \subseteq X$, the $\varepsilon$-covering number of $Y$, denoted by $\mathcal{M}_{\varepsilon}(Y)$, is the smallest integer $m$ such that there exist closed balls $B_1, \dots, B_m$ of diameter $\varepsilon$ with $Y \subseteq \bigcup_{1 \leq i \leq m} B_i$.
\end{defin}

When a measure is defined on the space $X$, it is often acceptable to disregard sets of small measure without significantly affecting the overall analysis. This motivates the introduction of a measure-sensitive notion of covering number, as introduced in \cite{dudley1969speed,weed2019sharp}.

\begin{defin}
Given a measure $\mu$ on $X$, the $(\varepsilon, \tau)$-covering number is defined as
\begin{equation*}
    \mathcal{M}_{\varepsilon}(\mu, \tau) := \inf \{ \mathcal{M}_{\varepsilon}(Y) :  \mu(Y) \geq 1- \tau \}.
\end{equation*}
\end{defin}

We now introduce the Minkowski dimension, also referred to as the Minkowski–Bouligand dimension or box-counting dimension.

\begin{defin}
The Minkowski dimension of a set $Y \subseteq X$, denoted by $\mathrm{dim}_{M}(Y)$, is defined as
\begin{equation*}
\mathrm{dim}_{M}(Y) := \limsup_{\varepsilon \rightarrow 0} \frac{ \log \mathcal{M}_{\varepsilon} (Y) }{- \log \varepsilon}.
\end{equation*}
\end{defin}

With the above preparation, we are ready to evaluate the $\varepsilon$-covering number of the subspace $\mathfrak{N}_{m}$ of $\mathfrak{N}$, defined as $\mathfrak{N}_{m} := \mathfrak{N} \cap \{ \mu \in \mathfrak{N} : |\mu| =m \}$, for $m =0,1 , 2,\dots$.

\begin{lemma}   \label{lm:coveringnumber}
For $m \in \mathbb{N}^{+}$ and $\varepsilon >0$, we have
\begin{equation}
\mathcal{M}_{\varepsilon} \left( \mathfrak{N}_{m} \right) \leq 
e^{m} \left( 1 + \frac{\mathcal{M}_{\varepsilon/m}(S)}{m}  \right)^{m}.  \label{eqc:coveringnumberbound}
\end{equation}
Furthermore, if $\mathrm{dim}_{M}(S) < \infty $, then for any $\kappa >0$, there exists $\varepsilon' >0$ such that for $ 0 <\varepsilon \leq \varepsilon'$ and $m \in \mathbb{N}^{+}$,
\begin{equation*}
\mathcal{M}_{\varepsilon} \left( \mathfrak{N}_{m} \right) \leq 
e^{m} \left( 1 +  m^{\mathrm{dim}_{M}(S) + \kappa -1  }  \varepsilon^{ - ( \mathrm{dim}_{M}(S) + \kappa) }  \right)^{m}.
\end{equation*}
\end{lemma}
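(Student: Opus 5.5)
The plan is to build an explicit cover of $\mathfrak{N}_m$ from a cover of $S$ and then count the number of resulting multisets. Let $K=\mathcal{M}_{\varepsilon/m}(S)$, and choose closed balls $B_1,\dots,B_K$ in $S$ of diameter $\varepsilon/m$ that cover $S$; we may take $K<\infty$ since $S$ is compact.

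\textbf{Step 1 (the cover).} For each multiset $\{j_1,\dots,j_m\}$ of indices from $\{1,\dots,K\}$, let $A_{j_1,\dots,j_m}$ be the set of $\mu=\sum_{i=1}^m\delta_{x_i}\in\mathfrak{N}_m$ that admit a labelling with $x_i\in B_{j_i}$ for all $i$. Every $\mu\in\mathfrak{N}_m$ lies in some such set, since each point $x_i$ belongs to some $B_j$. Two measures $\mu,\nu\in A_{j_1,\dots,j_m}$ can be paired point by point through the common index labels. Since $|\mu|=|\nu|=m$, no copies of $s_\alpha$ are added, so
\[
D_1(\mu,\nu)\le\sum_{i=1}^m\rho(x_i,y_i)\le m\cdot\varepsilon/m=\varepsilon .
\]
Thus each nonempty $A$ has $D_1$-diameter at most $\varepsilon$. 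Fix any $\mu_0\in A$. Then $A\subseteq\overline{B_{D_1}(\mu_0,\varepsilon)}$, but that is a ball of radius $\varepsilon$, i.e. diameter $2\varepsilon$ in the paper's convention, which is too big. To fix this, I would instead center the ball at an auxiliary "center" configuration: take the centers $c_{j}$ of the balls $B_j$ (each point of $B_j$ is within $\varepsilon/(2m)$ of $c_j$) and set $\mu_0=\sum_i\delta_{c_{j_i}}$. Then
\[
D_1(\mu,\mu_0)\le m\cdot\varepsilon/(2m)=\varepsilon/2,
\]
so $A\subseteq\overline{B_{D_1}(\mu_0,\varepsilon/2)}$. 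This is the point to be careful about: I am interpreting "closed ball of diameter $\varepsilon$" as a closed ball of radius $\varepsilon/2$, and the $B_j$ as balls with centers $c_j\in S$.

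\textbf{Step 2 (counting).} The number of multisets of size $m$ from $K$ types is
\[
\binom{K+m-1}{m}\le\binom{K+m}{m}\le\Big(\frac{e(K+m)}{m}\Big)^m=e^m\Big(1+\frac{K}{m}\Big)^m,
\]
using $\binom{N}{m}\le (eN/m)^m$. This gives \eqref{eqc:coveringnumberbound}.

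\textbf{Step 3 (dimension bound).} Set $d=\mathrm{dim}_M(S)$. By the definition of the limsup, for any $\kappa>0$ there is $\delta_0>0$ such that $\mathcal{M}_\delta(S)\le\delta^{-(d+\kappa)}$ for all $0<\delta\le\delta_0$. Take $\varepsilon'=\delta_0$. Then for $\varepsilon\le\varepsilon'$ and $m\ge1$ we have $\varepsilon/m\le\delta_0$, hence
\[
\mathcal{M}_{\varepsilon/m}(S)\le m^{d+\kappa}\varepsilon^{-(d+\kappa)},
\]
and dividing by $m$ yields the second display. The uniformity in $m$ comes for free because $\varepsilon/m\le\varepsilon$.

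The main obstacle is the diameter-versus-radius bookkeeping in Step 1, together with making sure the "common labelling" bound really controls the minimum over permutations in $D_1$. That bound is immediate, since $D_1$ is a minimum and any particular permutation gives an upper bound.
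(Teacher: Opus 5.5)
Your proposal is correct and follows essentially the paper's argument. You cover $S$ by $\mathcal{M}_{\varepsilon/m}(S)$ closed balls, center closed $D_1$-balls of radius $\varepsilon/2$ at the configurations of ball centers indexed by multisets, count the multisets, and get uniformity in $m$ from $\varepsilon/m\le\varepsilon$. The only cosmetic differences are that the paper routes the radius estimate through the $1$-Lipschitz surjection $\phi:(S^m,\rho^m)\to(\mathfrak{N}_m,D_1)$, and it bounds $1/m!$ via Stirling's formula rather than $m!\ge (m/e)^m$.
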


\begin{proof}
Consider the product space $S^{m}$ with the metric $\rho^{m}$ defined as 
\begin{equation*}
\rho^{m} \left( (x_1, \dots, x_m), (y_1, \dots, y_m) \right)
= \sum_{i=1}^{m} \rho(x_i, y_i),
\end{equation*}
for $(x_1, \dots, x_m), (y_1, \dots, y_m) \in S^{m}$. We define the following mapping from $(S^{m}, \rho^{m})$ to $(\mathfrak{N}_{m}, D_1)$:
\begin{equation*}
\begin{array}{ccccl}
\phi \ \  \colon & (S^{m}, \rho^{m}) & \longrightarrow &(\mathfrak{N}_{m}, D_1) &\\
 &(x_1, \dots, x_m) &\mapsto & \sum_{i=1}^{m} \delta_{x_i} & .
\end{array}
\end{equation*}
It is clear that $\phi$ is surjective. Moreover, $\phi$ is a Lipschitz continuous mapping with Lipschitz constant equal to $1$. Indeed,
\begin{eqnarray*}
&& D_{1} \left( \phi\left( (x_1, \dots, x_m)\right) , \phi\left( (y_1, \dots, y_m)\right) \right)
= \min_{\pi \in \Pi_m} \sum_{i=1}^{m} \rho \left(x_i, y_{\pi(i)} \right) \\
&\leq & \sum_{i=1}^{m} \rho \left(x_i, y_i \right)
= \rho^{m} \left( (x_1, \dots, x_m), (y_1, \dots, y_m) \right).
\end{eqnarray*}

We then evaluate $\mathcal{M}_{\varepsilon} \left( \mathfrak{N}_{m} \right)$ using $\mathcal{M}_{\varepsilon/m}(S)$. For simplicity of notation we denote $\mathcal{M}_{\varepsilon/m}(S)$ by $k$. It follows by the definition of $\mathcal{M}_{\varepsilon/m}(S)$ that there exist $k$ closed balls $B_1, \dots, B_k$ of diameter $\varepsilon/m$ such that $B_1, \dots, B_k$ cover $S$. Suppose that $B_j$ is centered at $z_j$ for $j = 1, \dots, k$.

For any $(x_1, \dots, x_m) \in S^{m}$, we have $x_i \in S$. Since $ S \subseteq \bigcup_{j=1}^{k} B_j $, it follows that $x_i \in B_{j_i}$ for some $1 \leq j_i \leq k$, and thus,
\begin{equation*}
    \rho\left( x_i, z_{j_i} \right) \leq \frac{\varepsilon}{2m}.
\end{equation*}
Therefore,
\begin{eqnarray*}
\rho^{m} \left( (x_1, \dots, x_m), \left( z_{j_1}, \dots, z_{j_m} \right)  \right)
= \sum_{i=1}^{m} \rho\left( x_i, z_{j_i} \right) 
\leq \sum_{i=1}^{m} \frac{\varepsilon}{2m} = \frac{\varepsilon}{2}.
\end{eqnarray*}
This implies that
\begin{equation*}
(x_1, \dots, x_m) \in \overline{ B_{\rho^{m}} \left( \left( z_{j_1}, \dots, z_{j_m} \right), \frac{\varepsilon}{2}  \right)  },
\end{equation*}
and thus,
\begin{equation*}
S^{m} \subseteq \bigcup_{ j_1, \cdots, j_m =1 }^{k}  \overline{ B_{\rho^{m}} \left( \left( z_{j_1}, \dots, z_{j_m} \right), \frac{\varepsilon}{2}  \right)  }.
\end{equation*}
Recalling the definition and properties of the mapping $\phi$, we have
\begin{eqnarray*}
    \mathfrak{N}_{m} = \phi\left( S^{m} \right) 
    &\subseteq& \phi\left(  \bigcup_{ j_1, \cdots, j_m =1 }^{k}  \overline{ B_{\rho^{m}} \left( \left( z_{j_1}, \dots, z_{j_m} \right), \frac{\varepsilon}{2}  \right)  }  \right)  \\
    &\subseteq& \bigcup_{ j_1, \cdots, j_m =1 }^{k} \phi\left( \overline{ B_{\rho^{m}} \left( \left( z_{j_1}, \dots, z_{j_m} \right), \frac{\varepsilon}{2}  \right)  }  \right)  \\
     &\subseteq& \bigcup_{ j_1, \cdots, j_m =1 }^{k} \overline{  B_{D_1} \left( \phi\left( \left( z_{j_1}, \dots, z_{j_m} \right) \right), \frac{\varepsilon}{2}  \right) },
\end{eqnarray*}
where the last inclusion follows by the fact that $\phi$ is a Lipschitz continuous mapping with Lipschitz constant equal to $1$.
    
Therefore, to evaluate the $\varepsilon$-covering number of $\mathfrak{N}_{m}$, it suffices to count the number of closed balls of the form $ \overline{  B_{D_1} \left( \phi( ( z_{j_1}, \dots, z_{j_m} )), \varepsilon/2   \right) } $. However, some of these balls may be identical. In particular, if two index multisets $\{ j_1, \dots, j_m \}$ and $\{ l_1, \dots, l_m \}$ are equal (i.e., they contain the same elements with the same multiplicities, possibly in a different order), then the corresponding balls coincide, since $\phi( ( z_{j_1}, \dots, z_{j_m} )) = \phi( ( z_{l_1}, \dots, z_{l_m} ))$. As a result, the total number of distinct such balls is equal to the number of distinct multisets of size $m$ formed from the set $\{1, 2, \dots, k\}$. This is a classical combinatorial problem of sampling with replacement, and the number of such multisets is given by:
\begin{equation*}
\binom{k+m-1}{k-1} = \binom{k+m-1}{m}.
\end{equation*}
Thus, it follows
\begin{eqnarray*}
\mathcal{M}_{\varepsilon} \left( \mathfrak{N}_{m} \right) \leq \binom{k+m-1}{m} 
\leq  \frac{ (k+m)^{m} }{m!} \leq \frac{ (k+m)^{m} }{ \sqrt{2 \pi m} \, e^{-m} m^m },
\end{eqnarray*}
where the last inequality results from quantitative forms of the DeMoivre-Stirling Theorem (see, for instance, \cite[Theorem 5.44]{Stromberg1981}). Noting that $k = \mathcal{M}_{\varepsilon/m}(s)$, \eqref{eqc:coveringnumberbound} follows by a direct computation.

Finally, we turn to the proof of the second assertion. Since $\mathrm{dim}_{M}(S) < \infty $, it follows by the definition of Minkowski dimension that for any $\kappa >0$, there exists $\varepsilon'>0$ such that for any $0 < \varepsilon \leq \varepsilon'$ and any $m \in \mathbb{N}^{+}$
\begin{equation*}
\frac{ \log \mathcal{M}_{ \varepsilon/m }(S) }{ - \log (\varepsilon/m) } < \mathrm{dim}_{M}(S) + \kappa,
\end{equation*}
namely,
\begin{equation*}
\mathcal{M}_{ \varepsilon/m }(S) < \left(  \frac{m}{\varepsilon} \right)^{  \mathrm{dim}_{M}(S) + \kappa }.
\end{equation*}
Plugging the above result into \eqref{eqc:coveringnumberbound}, the desired result follows.
\end{proof}

\subsection{Truncation and telescope decomposition}  \label{subsec:truncationandtelescope}
Note that $ \eta $ is a random element taking values in the metric space $(\mathfrak{N}, D_1)$, which is neither finite-dimensional nor bounded. Consequently, directly analyzing the rate of convergence of the corresponding empirical measure is challenging. To address this, we begin this subsection by applying a truncation argument, restricting the range of $ \eta $ to the compact subset $ \{ \mu \in \mathfrak{N} : |\mu| \leq m \} $ under the metric $ D_1 $.

Let $\mathbf{0}$ denote the zero counting measure on $S$, that is, $\mathbf{0}(A) = 0$ for every measurable set $A \subseteq S$. Define the mapping $\Phi_m : \mathfrak{N} \to \{ \mu \in \mathfrak{N} : |\mu| \leq m \}$ by
\begin{equation*}
\Phi_m(\mu) =
\begin{cases}
\mu, & \text{if } |\mu| \leq m, \\
\mathbf{0}, & \text{if } |\mu| > m.
\end{cases}
\end{equation*}
Recall that $\eta_1, \dots, \eta_n$ are i.i.d. realizations of the point process $\eta$. Define the truncated versions $\xi = \Phi_m(\eta)$ and $\xi_i = \Phi_m(\eta_i)$ for $i = 1, 2, \dots, n$, where each $\xi_i$ takes values in the compact subset $\{ \mu \in \mathfrak{N} : |\mu| \leq m \}$ of $\mathfrak{N}$. Based on these, we define the corresponding empirical measure:
\begin{equation*}
\widehat{\mathcal{L}}_{n}(\xi)  := \frac{1}{n} \sum_{i=1}^{n} \delta_{\xi_i}.
\end{equation*}

The following lemma establishes an evaluation of the Wasserstein distance between $\widehat{\mathcal{L}}_{n}(\eta)$ and $\mathcal{L}(\eta)$ in terms of the Wasserstein distance between $\widehat{\mathcal{L}}_{n}(\xi)$ and $\mathcal{L}(\xi)$, along with the tail behavior of $\eta$.

\begin{lemma}   \label{lm:truncation}
For $m \in \mathbb{N}^{+}$, we have
\begin{equation*}
E \left[ \left|  W_{p}\left( \widehat{\mathcal{L}}_{n}(\eta) , \mathcal{L} (\eta) \right)   -  W_{p} \left( \widehat{\mathcal{L}}_{n}(\xi),  \mathcal{L} (\xi) \right) \right| \right]
\leq 2 \left( \mathrm{diam}(S) + \alpha \right) \left( E\left[ |\eta|^{p} \mathds{1}_{  \{ |\eta| >m \} } \right] \right)^{\frac{1}{p}}.
\end{equation*}
\end{lemma}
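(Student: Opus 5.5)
The proof applies the triangle inequality for $W_p$ twice and bounds each resulting distance by an explicit coupling. Since $W_p$ is a metric on $\mathcal{P}_p(\mathfrak{N})$, we have
\begin{equation*}
\left| W_{p}\bigl( \widehat{\mathcal{L}}_{n}(\eta), \mathcal{L}(\eta)\bigr) - W_{p}\bigl( \widehat{\mathcal{L}}_{n}(\xi), \mathcal{L}(\xi)\bigr)\right| \leq W_p\bigl(\widehat{\mathcal{L}}_{n}(\eta), \widehat{\mathcal{L}}_{n}(\xi)\bigr) + W_p\bigl(\mathcal{L}(\eta), \mathcal{L}(\xi)\bigr).
\end{equation*}
We may assume $E[|\eta|^p]<\infty$, since otherwise the right-hand side of the lemma is infinite and there is nothing to prove. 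Then both laws lie in $\mathcal{P}_p(\mathfrak{N})$ by \eqref{pp_space}, so the triangle inequality applies.

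For the population term, we use the natural coupling $(\eta,\Phi_m(\eta))$, whose marginals are $\mathcal{L}(\eta)$ and $\mathcal{L}(\xi)$. This gives
\begin{equation*}
W_p^p(\mathcal{L}(\eta),\mathcal{L}(\xi)) \le E\bigl[D_1^p(\eta,\Phi_m(\eta))\bigr] = E\bigl[D_1^p(\eta,\mathbf{0})\mathds{1}_{\{|\eta|>m\}}\bigr].
\end{equation*}
By Theorem~\ref{thm:d1metric}, $D_1(\mu,\mathbf{0}) \le |\mu|(\mathrm{diam}(S)+\alpha)$. In fact, directly from \eqref{eqn:metricD1}, $D_1(\mu,\mathbf 0)=\sum_i\rho(s_\alpha,y_i)\le|\mu|(\mathrm{diam}(S)+\alpha)$. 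Hence
\begin{equation*}
W_p(\mathcal{L}(\eta),\mathcal{L}(\xi))\le (\mathrm{diam}(S)+\alpha)\bigl(E[|\eta|^p\mathds{1}_{\{|\eta|>m\}}]\bigr)^{1/p}.
\end{equation*}

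For the empirical term, we couple the two empirical measures by the diagonal coupling $\frac1n\sum_{i}\delta_{(\eta_i,\xi_i)}$. Its marginals are exactly $\widehat{\mathcal{L}}_{n}(\eta)$ and $\widehat{\mathcal{L}}_{n}(\xi)$, so
\begin{equation*}
W_p\bigl(\widehat{\mathcal{L}}_{n}(\eta),\widehat{\mathcal{L}}_{n}(\xi)\bigr)\le \Bigl(\frac1n\sum_{i=1}^n D_1^p(\eta_i,\mathbf 0)\mathds{1}_{\{|\eta_i|>m\}}\Bigr)^{1/p} \le (\mathrm{diam}(S)+\alpha)\Bigl(\frac1n\sum_{i=1}^n |\eta_i|^p\mathds{1}_{\{|\eta_i|>m\}}\Bigr)^{1/p}.
\end{equation*}
Taking expectations and applying Jensen's inequality to the concave map $x\mapsto x^{1/p}$ (valid since $p\ge1$), the expectation is at most $(\mathrm{diam}(S)+\alpha)\bigl(E[|\eta|^p\mathds{1}_{\{|\eta|>m\}}]\bigr)^{1/p}$, because the $\eta_i$ are identically distributed. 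Adding the two bounds yields the factor $2$ in the statement.

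There is no real obstacle in this argument. The only points needing care are the following.
\begin{itemize}
\item The identity $D_1(\mu,\Phi_m(\mu))=D_1(\mu,\mathbf 0)\mathds{1}_{\{|\mu|>m\}}$ holds because $\Phi_m(\mu)=\mu$ when $|\mu|\le m$.
\item Measurability of the random quantities should be checked. It follows because $\Phi_m$ is measurable: $\{|\mu|\le m\}$ is a measurable set in $\mathcal{N}_{\mathcal T}$.
\end{itemize}
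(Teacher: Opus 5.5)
Your proposal is correct and follows the paper's proof essentially step for step. You use the same triangle-inequality split, the same coupling $(\eta,\Phi_m(\eta))$ for the population term, the diagonal (identity-permutation) coupling for the empirical term, and the bound $D_1(\mu,\mathbf 0)\le |\mu|(\mathrm{diam}(S)+\alpha)$ from Theorem~\ref{thm:d1metric}; beyond the paper, you also state explicitly the Jensen step from $E[W_p^p]$ to $E[W_p]$ and the finiteness and measurability checks, which the paper leaves implicit.
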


\begin{proof}
Since $W_p$ is a metric, it follows by the triangle inequality that
\begin{equation}
\label{eqb:triangleinequality}
\left|  W_{p} \left( \widehat{\mathcal{L}}_{n}(\eta) , \mathcal{L}(\eta) \right)   -  W_{p} \left( \widehat{\mathcal{L}}_{n}(\xi),  \mathcal{L}(\xi) \right) \right| \leq  W_{p} \left( \widehat{\mathcal{L}}_{n}(\eta) ,  \widehat{\mathcal{L}}_{n}(\xi) \right) + W_{p} \left( \mathcal{L}(\eta), \mathcal{L}( \xi) \right).  
\end{equation}
Denote by $\Xi$ the probability measure on $\mathfrak{N}$ induced by $\eta$. Consider the probability measure $ \Xi \circ \left( \mathrm{Id} \times \Phi_{m} \right)^{-1} $ on $ \mathfrak{N} \times \mathfrak{N} $, which is the pushforward of $\Xi$ under the mapping:
\begin{equation*}
\begin{array}{ccccl}
\mathrm{Id} \times \Phi_{m} \ \  \colon & ( \mathfrak{N}, \mathcal{N}_{\mathcal{T}} ) & \longrightarrow &(\mathfrak{N} \times \mathfrak{N} , \mathcal{N}_{\mathcal{T}} \otimes \mathcal{N}_{\mathcal{T}}) &\\
 & \mu &\mapsto &  \mu \times \Phi_{m}( \mu )  & .
\end{array}
\end{equation*}
It can be easily verified that the marginal probability measures of $ \Xi \circ \left( \mathrm{Id} \times \Phi_{m} \right)^{-1} $ are $\Xi$ and $ \Xi \circ \Phi_{m}^{-1} $, which correspond to the laws of $\eta$ and $\xi$, respectively. Therefore, $ \Xi \circ \left( \mathrm{Id} \times \Phi_{m} \right)^{-1} $ belongs to $ \Upsilon \left( \mathcal{L}(\eta), \mathcal{L}(\xi) \right) $. By the definition of the Wasserstein distance $W_p$, we have
\begin{align}
W_{p}^{p}\left( \mathcal{L}(\eta), \mathcal{L}(\xi)  \right) 
& \leq \int_{ \mathfrak{N} \times \mathfrak{N} } D_{1}^{p} ( \mu, \nu ) \, d \left( \Xi \circ \left( \mathrm{Id} \times \Phi_{m} \right)^{-1} \right)(\mu, \nu) 
 = \int_{ \mathfrak{N} } D_{1}^{p} \left( \mu , \Phi_{m}(\mu) \right) \, d \Xi (\mu) \notag \\
& = \int_{ \mathfrak{N} } D_{1}^{p} \left( \mu , \mathbf{0} \right) \mathds{1}_{ \{ |\mu| > m \} } \, d \Xi (\mu)
\leq \int_{ \mathfrak{N} } |\mu|^{p} \left( \mathrm{diam}(S) + \alpha \right)^{p} \mathds{1}_{ \{ |\mu| > m \} } \, d \Xi (\mu) \notag  \\
&= \left( \mathrm{diam}(S) + \alpha \right)^{p} E\left[ |\eta|^{p} \mathds{1}_{ \{ |\eta| > m \} } \right], \label{eqb:boundtermdirect}
\end{align}
where the first equality follows from the change-of-variables formula for pushforward measures, the second equality uses the definition of $\Phi_m$, and the last inequality is a consequence of Theorem \ref{thm:d1metric}.

Furthermore, observe that both $\widehat{\mathcal{L}}_{n}(\eta)$ and $\widehat{\mathcal{L}}_{n}(\xi)$ are discrete probability measures supported on $n$ points in $\mathfrak{N}$, each assigning equal mass to their support elements. It follows by a standard result of Wasserstein distance (see, for instance, \cite[Lemma 3.1.A]{schuhmacher2005}) that
\begin{equation*}
W_{p}^{p} \left( \widehat{\mathcal{L}}_{n}(\eta), \widehat{\mathcal{L}}_{n}(\xi) \right) 
= \frac{1}{n} \min_{ \pi \in \Pi_n }  \sum_{i=1}^{n} D_{1}^{p} \left( \eta_i, \xi_{ \pi(i) } \right)
\leq \frac{1}{n} \sum_{i=1}^{n} D_{1}^{p} \left( \eta_i, \xi_{ i } \right).
\end{equation*}
Since $\xi_i = \Phi_{m}(\eta_i)$, it follows again by the definition of $\Phi_m$ that
\begin{align*}
E\left[ D_{1}^{p}\left(  \eta_i, \xi_{ i } \right) \right]
&= E \left[ D_{1}^{p} \left( \eta_i, \mathbf{0} \right) \mathds{1}_{ \{ |\eta_i| > m \} } \right]
\leq E\left[ |\eta_i|^{p} \left( \mathrm{diam}(S) + \alpha \right)^{p} \mathds{1}_{ \{ |\eta_i| > m \} } \right] \\
&= \left( \mathrm{diam}(S) + \alpha \right)^{p} E\left[ |\eta_i|^{p} \mathds{1}_{ \{ |\eta_i| > m \} } \right]
= \left( \mathrm{diam}(S) + \alpha \right)^{p} E\left[ |\eta|^{p} \mathds{1}_{ \{ |\eta| > m \} } \right].
\end{align*}
Thus,
\begin{equation}
E\left[ W_{p}^{p} \left( \widehat{\mathcal{L}}_{n}(\eta), \widehat{\mathcal{L}}_{n}(\xi) \right)  \right] \leq \left( \mathrm{diam}(S) + \alpha \right)^{p} E\left[ |\eta|^{p} \mathds{1}_{ \{ |\eta| > m \} } \right]. \label{eqb:boundtermempirical}
\end{equation}
Combining \eqref{eqb:triangleinequality}, \eqref{eqb:boundtermdirect}, and \eqref{eqb:boundtermempirical}, the desired result follows.
\end{proof}

We next examine the Wasserstein distance $W_p ( \widehat{\mathcal{L}}_{n}(\xi), \mathcal{L}(\xi) )$. Since the law $\mathcal{L}(\xi)$ is supported on the compact subset $\{ \mu \in \mathfrak{N} : |\mu| \leq m \} = \bigcup_{j=0}^{m} \mathfrak{N}_{j} $, we employ a telescoping decomposition to analyze $W_p ( \widehat{\mathcal{L}}_{n}(\xi), \mathcal{L}(\xi) )$. The details are summarized in the following lemma, whose proof is similar to those of \cite{LeiJing2020Caco,fournier2015rate} and is contained in Appendix~\ref{appendix3}

\begin{lemma} \label{lm:resultaftertelescope}
For each $j = 0, 1, 2, \dots, m$, if $\mathcal{L}(\xi)(\mathfrak{N}_j) > 0$, we define $\mathcal{L}(\xi)^{\mathfrak{N}_j}$ as the probability measure obtained by restricting $\mathcal{L}(\xi)$ to $\mathfrak{N}_j$ and normalizing. Specifically, for any $\mathfrak{B} \in \mathcal{N}_{\mathcal{T}}$, $\mathcal{L}(\xi)^{\mathfrak{N}_j}(\mathfrak{B}) := \mathcal{L}(\xi)(\mathfrak{B} \cap \mathfrak{N}_j) / \mathcal{L}(\xi)(\mathfrak{N}_j)$. If instead $\mathcal{L}(\xi)(\mathfrak{N}_j) = 0$, then $\mathcal{L}(\xi)^{\mathfrak{N}_j}$ is defined as any probability measure supported on $\mathfrak{N}_j$.
Similarly, if $\widehat{\mathcal{L}}_{n}(\xi)(\mathfrak{N}_j) > 0$, we define $\widehat{\mathcal{L}}_{n}(\xi)^{\mathfrak{N}_j}$ by restricting $\widehat{\mathcal{L}}_{n}(\xi)$ to $\mathfrak{N}_j$ and normalizing in the same way. Otherwise, if $\widehat{\mathcal{L}}_{n}(\xi)(\mathfrak{N}_j) = 0$, we set $\widehat{\mathcal{L}}_{n}(\xi)^{\mathfrak{N}_j} := \mathcal{L}(\xi)^{\mathfrak{N}_j}$. Then, we have
\begin{align*}
W_{p}^{p} \left( \widehat{\mathcal{L}}_{n}(\xi), \mathcal{L}(\xi) \right)
\leq& \sum_{j=1}^{m} \left( \widehat{\mathcal{L}}_{n}(\xi)(\mathfrak{N}_{j}) \wedge \mathcal{L}(\xi)(\mathfrak{N}_{j}) \right) W_{p}^{p} \left( \widehat{\mathcal{L}}_{n}(\xi)^{\mathfrak{N}_j}, \mathcal{L}(\xi)^{\mathfrak{N}_j} \right) \mathds{1}_{\{ \widehat{\mathcal{L}}_{n}(\xi)(\mathfrak{N}_j) >0 \}}  \\
 & +  2^{p} \left( \mathrm{diam}(S) + \alpha \right)^{p} \sum_{j=1}^{m} j^{p} \left| \widehat{\mathcal{L}}_{n}(\xi)(\mathfrak{N}_{j}) - \mathcal{L}(\xi)(\mathfrak{N}_{j}) \right|.
\end{align*}
\end{lemma}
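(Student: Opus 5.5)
We will bound $W_p^p(\widehat{\mathcal{L}}_{n}(\xi),\mathcal{L}(\xi))$ by exhibiting one explicit coupling $\gamma$ between $\widehat{\mathcal{L}}_{n}(\xi)$ and $\mathcal{L}(\xi)$ and estimating $\int D_1^p\,d\gamma$. Write $a_j=\widehat{\mathcal{L}}_{n}(\xi)(\mathfrak{N}_j)$ and $b_j=\mathcal{L}(\xi)(\mathfrak{N}_j)$ for $j=0,\dots,m$, and set $c_j=a_j\wedge b_j$. Both measures are supported on $\bigcup_{j=0}^m\mathfrak{N}_j$, so they decompose as
\[
\widehat{\mathcal{L}}_{n}(\xi)=\sum_j a_j\,\widehat{\mathcal{L}}_{n}(\xi)^{\mathfrak{N}_j},\qquad
\mathcal{L}(\xi)=\sum_j b_j\,\mathcal{L}(\xi)^{\mathfrak{N}_j}.
\]
Here the conventional choice of the normalized pieces when some $a_j$ or $b_j$ is zero is harmless, since such pieces carry zero weight.

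The construction has two parts. For each $j$, let $\gamma_j$ be an optimal coupling of $\widehat{\mathcal{L}}_{n}(\xi)^{\mathfrak{N}_j}$ and $\mathcal{L}(\xi)^{\mathfrak{N}_j}$. It exists because $\mathfrak{N}_j$ is compact by Proposition~\ref{topology1}(ii), and it is supported on $\mathfrak{N}_j\times\mathfrak{N}_j$. The "matched" part of the coupling is $\sum_j c_j\gamma_j$.

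The residual measures are $\widehat R=\sum_j(a_j-c_j)\widehat{\mathcal{L}}_{n}(\xi)^{\mathfrak{N}_j}$ and $R=\sum_j(b_j-c_j)\mathcal{L}(\xi)^{\mathfrak{N}_j}$. They are nonnegative and have equal total mass
\[
r=\sum_j(a_j-c_j)=\sum_j(b_j-c_j)=\tfrac12\sum_j|a_j-b_j|.
\]
If $r>0$, we couple them by the product coupling $\widehat R\otimes R/r$. Then $\gamma=\sum_j c_j\gamma_j+\widehat R\otimes R/r$ has the correct marginals, which one checks directly from the decompositions.

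The matched part contributes exactly $\sum_j c_j W_p^p(\widehat{\mathcal{L}}_{n}(\xi)^{\mathfrak{N}_j},\mathcal{L}(\xi)^{\mathfrak{N}_j})$. The $j=0$ term vanishes because $\mathfrak{N}_0=\{\mathbf 0\}$. We may also insert the indicator $\mathds{1}_{\{a_j>0\}}$, since $c_j=0$ when $a_j=0$. This gives the first sum in the lemma.

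For the residual part, take $\mu\in\mathfrak{N}_i$ and $\nu\in\mathfrak{N}_k$. Theorem~\ref{thm:d1metric} gives
\[
D_1^p(\mu,\nu)\le (i\vee k)^p(\mathrm{diam}(S)+\alpha)^p\le (i^p+k^p)(\mathrm{diam}(S)+\alpha)^p.
\]
Integrating against $\widehat R\otimes R/r$, the $i^p$ term yields $\sum_i i^p(a_i-c_i)$, because the $\nu$-marginal of the product has mass $r$ and we divide by $r$. Symmetrically, the $k^p$ term yields $\sum_k k^p(b_k-c_k)$. Since $(a_j-c_j)+(b_j-c_j)=|a_j-b_j|$, the residual cost is at most
\[
(\mathrm{diam}(S)+\alpha)^p\sum_{j=1}^m j^p|a_j-b_j|,
\]
which is within the stated $2^p$ constant.

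The main points needing care are two. First, the measurability of the optimal couplings and the existence of $\gamma_j$; compactness handles both, or one can use $\epsilon$-optimal couplings and let $\epsilon\to0$. Second, the bookkeeping of the degenerate cases $a_j=0$ or $b_j=0$, which is routine once the weights $c_j$ are used.
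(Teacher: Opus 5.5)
Your proposal is correct and is essentially the paper's proof: the same coupling $\sum_j c_j\gamma_j$ plus the normalized product of the residual pieces, which is the paper's $K^{-1}H\times\Xi$ with weights $(a_j-b_j)_{\pm}$. The differences are minor: you get existence of $\gamma_j$ from compactness of $\mathfrak{N}_j$ rather than completeness, and you bound the residual cost directly via Theorem~\ref{thm:d1metric} and $(i\vee k)^p\le i^p+k^p$ instead of routing through $\mathbf{0}$, which gives constant $1$ in place of $2^p$.
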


In the next step, we aim to quantify $W_{p}^{p} ( \widehat{\mathcal{L}}_{n}(\xi)^{\mathfrak{N}_j}, \mathcal{L}(\xi)^{\mathfrak{N}_j} )$, which, in turn, improves the results in Lemma~\ref{lm:resultaftertelescope}. The refined result is summarized in Lemma~\ref{lm:exactboundWassersteinxi}, and its proof is relegated to Appendix~\ref{appendix3}. Furthermore, our theoretical development relies on Propositions~1 and~3 from \cite{weed2019sharp}, which are restated in Appendix~\ref{appendix2} for completeness and the reader’s convenience.

\begin{lemma}  \label{lm:exactboundWassersteinxi}
For any $k_{1}^{*}, \dots, k_{m}^{*} \in  \{0\}  \cup \mathbb{N}^{+}$ and $p \geq 1$, we have
\begin{align*}
E\left[  W_{p}^{p} \left( \widehat{\mathcal{L}}_{n}(\xi), \mathcal{L}(\xi) \right) \right] \leq& 
2 \times 3^{p} ( \mathrm{diam}(S) + \alpha )^{p} \sum_{j=1}^{m}  j^{p}\Bigg[  3^{-k_{j}^{*} p}  \mathcal{L}( \xi )( \mathfrak{N}_j ) \\
&  \qquad \qquad \qquad + \sum_{k=0}^{k^{*}_{j}} 3^{-kp} \left( 2 \mathcal{L}( \xi ) ( \mathfrak{N}_j ) \right) \wedge \left( \sqrt{ \frac{ \mathcal{M}_{3^{-(k+1)}}( \mathfrak{N}_{j} ) \mathcal{L}( \xi ) ( \mathfrak{N}_j ) }{n} } \right) \Bigg].
\end{align*}
\end{lemma}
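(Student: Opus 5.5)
We start from Lemma~\ref{lm:resultaftertelescope} and take expectations of each of the two sums separately.

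\textbf{Cardinality term.} By Jensen, $E|\widehat{\mathcal{L}}_{n}(\xi)(\mathfrak{N}_j)-\mathcal{L}(\xi)(\mathfrak{N}_j)| \le \sqrt{\mathcal{L}(\xi)(\mathfrak{N}_j)/n}$, since $n\widehat{\mathcal{L}}_{n}(\xi)(\mathfrak{N}_j)$ is binomial. The triangle inequality gives the trivial bound $2\mathcal{L}(\xi)(\mathfrak{N}_j)$. Since $\mathcal{M}_{1/3}(\mathfrak{N}_j)\ge 1$, the minimum of these two bounds is dominated by the $k=0$ summand, up to the constant $2\times 3^p$, so this part fits inside the stated bound.

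\textbf{Conditional term.} We condition on $N_j := n\widehat{\mathcal{L}}_{n}(\xi)(\mathfrak{N}_j)$. Given $N_j=\ell\ge1$, the points falling in $\mathfrak{N}_j$ are i.i.d.\ from $\mathcal{L}(\xi)^{\mathfrak{N}_j}$, so $\widehat{\mathcal{L}}_{n}(\xi)^{\mathfrak{N}_j}$ is an $\ell$-sample empirical measure. By Theorem~\ref{thm:d1metric}, $\mathfrak{N}_j$ has $D_1$-diameter at most $j(\mathrm{diam}(S)+\alpha)$. We apply the dyadic (here triadic) partition bound, Proposition~1 of \cite{weed2019sharp} as restated in Appendix~\ref{appendix2}, after rescaling the metric by $\delta_j := j(\mathrm{diam}(S)+\alpha)$. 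For any $k^*_j$, build nested partitions $\mathcal{Q}^k$ of $\mathfrak{N}_j$ into sets of diameter at most $\delta_j 3^{-k}$, with $|\mathcal{Q}^{k}|\le\mathcal{M}_{3^{-(k+1)}}(\mathfrak{N}_j)$ after scaling; this is where the normalization of covering numbers has to be tracked. This yields
\begin{equation*}
W_p^p(\mu,\nu)\le \delta_j^p\Big[3^{-k^*_jp}+\sum_{k=0}^{k^*_j}3^{-kp}\sum_{Q\in\mathcal{Q}^{k+1}}|\mu(Q)-\nu(Q)|\Big],
\end{equation*}
with a $3^p$ factor absorbed from the index shift.

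We then multiply by the weight $\widehat{\mathcal{L}}_{n}(\xi)(\mathfrak{N}_j)\wedge\mathcal{L}(\xi)(\mathfrak{N}_j)$, using two bounds in turn.
\begin{itemize}
\item The leading term is bounded by $3^{-k^*_jp}\mathcal{L}(\xi)(\mathfrak{N}_j)$.
\item For each $Q$, we use $\widehat{\mathcal{L}}_{n}(\xi)(\mathfrak{N}_j)\,|\widehat{\mathcal{L}}_{n}(\xi)^{\mathfrak{N}_j}(Q)-\mathcal{L}(\xi)^{\mathfrak{N}_j}(Q)|\le |\widehat{\mathcal{L}}_{n}(\xi)(Q)-\mathcal{L}(\xi)(Q)|+\mathcal{L}(\xi)^{\mathfrak{N}_j}(Q)\,|\widehat{\mathcal{L}}_{n}(\xi)(\mathfrak{N}_j)-\mathcal{L}(\xi)(\mathfrak{N}_j)|$.
\end{itemize}
This removes the conditioning and works with unnormalized measures. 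Summing over $Q$, taking expectations, and applying Jensen and Cauchy--Schwarz gives $\sum_Q\sqrt{\mathcal{L}(\xi)(Q)/n}\le\sqrt{|\mathcal{Q}^{k+1}|\,\mathcal{L}(\xi)(\mathfrak{N}_j)/n}$. The second piece gives $\sqrt{\mathcal{L}(\xi)(\mathfrak{N}_j)/n}$, and together these produce the factor $2$. The alternative bound $2\mathcal{L}(\xi)(\mathfrak{N}_j)$ for each level follows from total variation being at most $2$ times the weight. Taking the minimum termwise and collecting the constants $2\times3^p$ and $j^p(\mathrm{diam}(S)+\alpha)^p$ yields the statement.

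\textbf{Main obstacle.} The hardest step is applying the partition bound correctly. We must match the scaling of $\mathfrak{N}_j$, whose diameter is of order $j$, with covering numbers $\mathcal{M}_{3^{-(k+1)}}(\mathfrak{N}_j)$ at unnormalized radius. If the unnormalized radius is used as is, the partition at level $k$ should have cells of diameter $3^{-(k+1)}$ rather than $\delta_j3^{-k}$. Then the factor $j^p$ would have to come from bounding the coarse levels, including the term for $k>k^*_j$, by $\delta_j^p$. I would first try partitions at absolute scale $3^{-(k+1)}$ and check whether the stated form with $j^p(\mathrm{diam}(S)+\alpha)^p$ in front remains an upper bound. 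It does, since it only enlarges the weights when $\delta_j\ge1$; if $\delta_j<1$, the rescaled version applies instead.
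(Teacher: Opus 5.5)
Your proposal is correct and follows essentially the same route as the paper. Both start from Lemma~\ref{lm:resultaftertelescope}, rescale $D_1$ on $\mathfrak{N}_j$ by $j(\mathrm{diam}(S)+\alpha)$, apply Propositions~\ref{Prop:firstinWeed} and~\ref{prop:thirdinWeed}, convert normalized differences to unnormalized ones via the triangle inequality, and finish with the binomial variance bound and Cauchy--Schwarz.

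The differences are only bookkeeping:
\begin{itemize}
\item You split using the weight $\widehat{\mathcal{L}}_{n}(\xi)(\mathfrak{N}_j)$ rather than $\mathcal{L}(\xi)(\mathfrak{N}_j)$.
\item You absorb the cardinality mismatch level by level via the factor $2$, using that each partition has at least one cell, whereas the paper sums it geometrically into the $k=0$ term.
\item The sum in your display should stop at $k=k_j^*-1$ to land exactly on the stated range.
\end{itemize}

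Your normalization concern is well founded, and the paper passes over it. The cardinality bound from Proposition~\ref{prop:thirdinWeed} is in terms of covering numbers for the rescaled metric. These are guaranteed to be at most the $D_1$ covering numbers only when $j(\mathrm{diam}(S)+\alpha)\ge 1$.
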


\subsection{An upper bound}
In this subsection, we derive an upper bound on the expected Wasserstein distance between $\widehat{\mathcal{L}}_{n}(\eta)$ and $\mathcal{L}(\eta)$, using the results established in Subsection \ref{subsec:truncationandtelescope}. To proceed, we impose the following assumptions:

\begin{enumerate}[label=(A\arabic*)]
\item The Minkowski dimension of $S$ satisfies $0 < \mathrm{dim}_{M}(S) < \infty$;
\item There exist constants $K_1 > 0$ and $\lambda > 0$ such that, for every $m \in \{0\} \cup \mathbb{N}^{+}$, the tail probability satisfies $P ( | \eta | = m) \leq K_1 e^{ - \lambda m}$.
\end{enumerate}

\begin{lemma}  \label{lm:onetermevaluation}
Suppose Assumptions (A1) and (A2) hold. For any $\kappa >0$, define
\begin{equation}
\overline{l}_{j} := \left\lfloor  \frac{1}{\mathrm{dim}_{M}(S) + \kappa}  \frac{ \log \left(n^{\frac{1}{j}} / (2 e^{\lambda+1} j^{ \mathrm{dim}_{M}(S) + \kappa })\right)  }{ \log 3} \right\rfloor -1  \label{equp:loverline}
\end{equation}
and $ k_{j}^{*} :=  \overline{l}_{j} \vee n$, for $j =1, \dots, m$. Then, there exist constants $c$ and $C$ depending on $ \mathrm{dim}_{M}(S) $, $K_1$, $\kappa$, $p$, and $\lambda$ such that, for every $1 \leq j \leq m$
\begin{eqnarray*}
&& 3^{-k_{j}^{*} p}  \mathcal{L}( \xi )( \mathfrak{N}_j )  + \sum_{k=0}^{k^{*}_{j}} 3^{-kp} \left( 2 \mathcal{L}( \xi ) ( \mathfrak{N}_j ) \right) \wedge \left( \sqrt{ \frac{ \mathcal{M}_{3^{-(k+1)}}( \mathfrak{N}_{j} ) \mathcal{L}( \xi ) ( \mathfrak{N}_j ) }{n} } \right) \\
&\leq& C\left( e^{p \log m} e^{-2 \sqrt{  \frac{ p\lambda}{\mathrm{dim}_{M}(S) + \kappa}  } \sqrt{\log n} } + n^{-1/2} \log n + n^{-1/2} e^{c m \log m} \right).
\end{eqnarray*}
\end{lemma}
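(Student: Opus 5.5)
The plan is to bound the two kinds of terms in the bracket separately, using $\mathcal{L}(\xi)(\mathfrak{N}_j)\le P(|\eta|=j)\le K_1e^{-\lambda j}$ (Assumption (A2)) and the covering bound of Lemma~\ref{lm:coveringnumber}. Throughout write $d:=\mathrm{dim}_M(S)+\kappa$. The exponent $d$ is chosen so that $\bar l_j$ in \eqref{equp:loverline} satisfies, whenever $\bar l_j\ge 0$,
\[
3^{(\bar l_j+1)d}\le \frac{n^{1/j}}{2e^{\lambda+1}j^{d}} < 3^{(\bar l_j+3)d}.
\]

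\emph{The boundary term.} Since $k_j^*=\bar l_j\vee n\ge n$, we have $3^{-k_j^*p}\mathcal{L}(\xi)(\mathfrak{N}_j)\le 3^{-np}$. This is far smaller than $n^{-1/2}$, so it is harmless.

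\emph{Splitting the sum.} I split $\sum_{k=0}^{k_j^*}$ at $\bar l_j$.

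For $k>\bar l_j$ I use the first branch of the minimum, $2\mathcal{L}(\xi)(\mathfrak{N}_j)\le 2K_1e^{-\lambda j}$. The geometric series then gives a bound $C\,3^{-(\bar l_j+1)p}e^{-\lambda j}$. By the lower estimate for $3^{(\bar l_j+3)d}$ this is at most
\[
C\, j^{p}\exp\!\Big(-\tfrac{p}{dj}\log n-\lambda j\Big).
\]
If $\bar l_j<0$ the whole sum is of this type. The same bound still holds trivially, because then $n^{1/j}\lesssim j^d$ and so the exponential factor is bounded below by a constant times $e^{-\lambda j}$.

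Minimising $\frac{p\log n}{dj}+\lambda j$ over $j>0$ gives the lower bound $2\sqrt{p\lambda\log n/d}$. Together with $j^p\le m^p=e^{p\log m}$, this yields the first term $e^{p\log m}e^{-2\sqrt{p\lambda/d}\sqrt{\log n}}$.

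For $0\le k\le\bar l_j$ I use the second branch. Lemma~\ref{lm:coveringnumber} with $\varepsilon=3^{-(k+1)}$ gives $\mathcal{M}_{3^{-(k+1)}}(\mathfrak{N}_j)\le (2e\,j^{d-1}3^{(k+1)d})^j$, after using $1+x\le 2\max(1,x)$. The finitely many $k$ with $3^{-(k+1)}>\varepsilon'$ are handled by the first bound \eqref{eqc:coveringnumberbound} and absorbed into constants. The $k$-th term is then at most
\[
\sqrt{K_1/n}\;e^{-\lambda j/2}\,(2e\,j^{d-1}3^{d})^{j/2}\;3^{k(dj/2-p)}.
\]
This is a geometric series with ratio $3^{dj/2-p}$, and I separate three regimes.

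\emph{Case $dj/2>p$.} The sum is dominated by the last term $k=\bar l_j$. Using $3^{(\bar l_j+1)dj/2}\le n^{1/2}(2e^{\lambda+1}j^d)^{-j/2}$, the factor $n^{-1/2}$ cancels. What remains is
\[
3^{-\bar l_j p}\,e^{-\lambda j/2}\Big(\frac{2e\,j^{d-1}}{2e^{\lambda+1}j^{d}}\Big)^{j/2}\le 3^{-\bar l_jp}e^{-\lambda j},
\]
which is exactly the quantity already controlled above. The factor $e^{\lambda+1}$ in $\bar l_j$ was inserted precisely to make this cancellation work.

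\emph{Case $dj/2=p$.} All $\bar l_j+1=O(\log n)$ terms are comparable. Since $j=2p/d$ is a fixed constant, the sum is $\le C n^{-1/2}\log n$.

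\emph{Case $dj/2<p$.} The series is dominated by its first term, so it is $\le Cn^{-1/2}(2e\,3^dj^{d-1})^{j/2}\le n^{-1/2}e^{cm\log m}$.

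Summing the three contributions gives the claimed bound.

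The main obstacle is the bookkeeping around $\bar l_j$. Specifically, I must verify that the large-$j$ regime of the covering sum really collapses onto the same quantity $3^{-\bar l_jp}e^{-\lambda j}$ with constants uniform in $j$ and $n$. I must also check that the floor and the $-1$ in \eqref{equp:loverline} only cost constant factors $3^{O(p)}$.
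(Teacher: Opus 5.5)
Your proposal is correct and follows essentially the paper's route. The paper also splits the $k$-sum into three blocks: an initial block where the Minkowski-dimension covering bound is not yet available, giving $n^{-1/2}e^{cm\log m}$; a middle block up to $\overline{l}_j$ using the covering branch, with a case split on the sign of $dj/2-p$ and the cancellation built into $\overline{l}_j$; and a tail block using the $2\mathcal{L}(\xi)(\mathfrak{N}_j)$ branch. It then finishes with AM--GM. The only difference is the boundary term, which the paper bounds via $k_j^*\ge\overline{l}_j$ rather than your $k_j^*\ge n$; both work.

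One small slip: $1+x\le 2\max(1,x)$ does not give $(2e\,j^{d-1}3^{(k+1)d})^j$ when $d<1$ and $j$ is large, because $j^{d-1}3^{(k+1)d}$ can then fall below $1$. Use instead $1+j^{d-1}3^{(k+1)d}\le 2j^{d}3^{(k+1)d}$, as the paper does. Your cancellation against the $j^{d}$ in $\overline{l}_j$ then yields exactly $e^{-\lambda j}3^{-\overline{l}_jp}$.
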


\begin{proof}
In the rest of the proof, we use $c$ and $C$ to denote constants that depend on $ \mathrm{dim}_{M}(S) $, $K_1$, $\kappa$, $p$, and $\lambda$. Note that the specific values of $c$ and $C$ may vary from line to line.

Since $0 < \mathrm{dim}_{M}(S) < \infty$, it follows by Lemma \ref{lm:coveringnumber} that for any $\kappa >0$, there exists $\varepsilon'$ (depending on $\kappa$) such that for all $0 < \varepsilon \leq \varepsilon'$ and $j \in \mathbb{N}^{+}$
\begin{equation*}
\mathcal{M}_{\varepsilon} ( \mathfrak{N}_j ) \leq e^{j} \left( 1 + j^{ \mathrm{dim}_{M}(S) + \kappa  -1 } \varepsilon^{-(\mathrm{dim}_{M}(S) + \kappa )} \right)^{j}.
\end{equation*}
To simplify the notation, we denote $\mathrm{dim}_{M}(S) + \kappa$ by $\beta$. Define $\underline{l} := \lceil - \log \varepsilon' / \log 3 \rceil -1$. It is easily verified that $ 3^{-(k+1)} \leq \varepsilon' $ if $k \geq \underline{l}$. Therefore, for $k \geq \underline{l}$,
\begin{align}
\mathcal{M}_{3^{-(k+1)}}( \mathfrak{N}_j ) \leq e^{j} \left( 1+ j^{\beta -1} 3^{(k+1) \beta} \right)^{j} \leq  e^{j} \left( 2 j^{\beta } 3^{(k+1) \beta} \right)^{j} = (2 e)^{j} j^{\beta j} 3^{ (k+1) \beta j }.  \label{equp:boundforM}
\end{align}

In the next step, we aim to evaluate 
\begin{equation*}
\sum_{k=0}^{k^{*}_{j}} 3^{-kp} \left( 2 \mathcal{L}( \xi ) ( \mathfrak{N}_j ) \right) \wedge \left( \sqrt{ \frac{ \mathcal{M}_{3^{-(k+1)}}( \mathfrak{N}_{j} ) \mathcal{L}( \xi ) ( \mathfrak{N}_j ) }{n} } \right).
\end{equation*}
To this end, we derive an upper bound for the above summation by decomposing it into three separate summations, i.e.,
\begin{align}
\sum_{k=0}^{k^{*}_{j}} (\cdot) &\leq \sum_{k=0}^{ \underline{l} \vee 0 -1 } (\cdot) + \sum_{k=\underline{l} \vee 0}^{ \overline{l}_{j} \vee 0 -1 } (\cdot)
 + \sum_{k=\overline{l}_{j} \vee 0}^{  k_{j}^{*} } (\cdot)  \notag \\
& = : T_1 + T_2 + T_3.  \label{equp:sumsplitmain}
\end{align}
By convention, when $l_1 > l_2$, the summation $\sum_{k=l_1}^{l_2} (\cdot)$ is understood to be $0$. 

We begin by evaluating $T_1$. For an integer $k$ satisfying $0 \leq k < \underline{l} \vee 0$, it is evident that such a $k$ exists only if $\underline{l} >0$ and $0 \leq k < \underline{l}$. Consequently, for $1 \leq j \leq m$, it follows from \eqref{equp:boundforM} that
\begin{equation*}
\mathcal{M}_{3^{-(k+1)}}( \mathfrak{N}_j ) \leq \mathcal{M}_{3^{-(\underline{l}+1)}}( \mathfrak{N}_j ) \leq (2 e)^{j} j^{\beta j} 3^{ ( \underline{l} +1) \beta j } \leq (2e)^{m} m^{\beta m} 3^{ ( \underline{l} +1) \beta m} \leq e^{c m \log m}.
\end{equation*}
Therefore,
\begin{align}
T_1 & \leq \sum_{k=0}^{ \underline{l} \vee 0 -1 } 3^{-kp}  \sqrt{ \frac{ \mathcal{M}_{3^{-(k+1)}}( \mathfrak{N}_{j} ) \mathcal{L}( \xi ) ( \mathfrak{N}_j ) }{n} } 
\leq  \sum_{k=0}^{ \underline{l} \vee 0 -1 } 3^{-kp} n^{-1/2} e^{cm \log m /2 } \notag \\
& \leq C n^{-1/2} e^{cm \log m  }.  \label{equp:sumsplit1}
\end{align}

We then turn to $T_2$. For $k \geq \underline{l} \vee 0$, it follows by \eqref{equp:boundforM} that $ \mathcal{M}_{3^{-(k+1)}}( \mathfrak{N}_j ) \leq (2 e)^{j} j^{\beta j} 3^{ (k+1) \beta j } $. Furthermore, note that for $1 \leq j \leq m$, $ \mathcal{L}(\xi)( \mathfrak{N}_j ) = P( |\xi| =j ) = P( |\eta| =j ) \leq K_1 e^{-\lambda j} $. Therefore,
\begin{align*}
T_2 &\leq \sum_{k=\underline{l} \vee 0}^{ \overline{l}_{j} \vee 0 -1 } 3^{-kp}  \sqrt{ \frac{ \mathcal{M}_{3^{-(k+1)}}( \mathfrak{N}_{j} ) \mathcal{L}( \xi ) ( \mathfrak{N}_j ) }{n} } 
\leq \sum_{k=\underline{l} \vee 0}^{ \overline{l}_{j} \vee 0 -1 } 3^{-kp} n^{-1/2} (2e)^{j/2} j^{\beta j /2} 3^{ (k+1) \beta j/2 } \sqrt{K_1} e^{-\lambda j/2} \\
&= \sqrt{K_1} 2^{j/2} e^{ (1-\lambda) j/2 } (3j)^{ \beta j/2 } \times  n^{-1/2} \sum_{k=\underline{l} \vee 0}^{ \overline{l}_{j} \vee 0 -1 } 3^{( \beta j /2 -p ) k }.
\end{align*}
If $ j \leq \lfloor 2p/\beta \rfloor $, then $ \beta j /2 - p \leq 0 $, and thus,
\begin{align*}
T_2 \leq C n^{-1/2} \sum_{k=\underline{l} \vee 0}^{ \overline{l}_{j} \vee 0 -1 } 1 \leq C n^{-1/2} ( \overline{l}_{j} \vee 0 - \underline{l} \vee 0 )_{+} \leq C n^{-1/2} \log n,  \label{equp:boundT2first}
\end{align*}
where the last inequality follows from the definition of $\overline{l}_{j}  $ in \eqref{equp:loverline}, together with the setting that $j \geq 1$. If $ j > \lfloor 2p/\beta \rfloor$, then $ \beta j /2 - p \geq \left(\lfloor 2p/\beta \rfloor + 1\right) \beta/2 -p >0 $. Therefore,
\begin{align*}
T_2 &\leq \sqrt{K_1} 2^{j/2} e^{ (1-\lambda) j/2 } (3j)^{ \beta j/2 } \times  n^{-1/2} \sum_{k=\underline{l} \vee 0}^{ \overline{l}_{j} \vee 0 -1 } 3^{( \beta j /2 -p ) k }  \\
& \leq \sqrt{K_1} 2^{j/2} e^{ (1-\lambda) j/2 } (3j)^{ \beta j/2 } \times  n^{-1/2} \, \frac{ 3^{ (\beta j /2 -p) (\overline{l}_{j} \vee 0) } }{ 3^{ (\beta j /2 -p) } -1 } \mathds{1}_{ \{ \overline{l}_{j} \vee 0 >  \underline{l} \vee 0\} } \\
& \leq \sqrt{K_1} 2^{j/2} e^{ (1-\lambda) j/2 } (3j)^{ \beta j/2 } \times  n^{-1/2} \, \frac{ 3^{ (\beta j /2 -p) \overline{l}_{j}  } }{ 3^{ \left( \left(\lfloor 2p/\beta \rfloor + 1\right)\beta  /2 -p \right) } -1 } \mathds{1}_{ \{ \overline{l}_{j} \vee 0 >  \underline{l} \vee 0\} },
\end{align*}
where in the last inequality, we invoke the fact that $\overline{l}_{j} > 0$, which follows from the restriction $\overline{l}_{j} \vee 0 > \underline{l} \vee 0$.
Note that
\begin{equation*}
\overline{l}_{j} \leq \frac{1}{\beta} \, \frac{ \log \left( n^{\frac{1}{j}}/ (2 e^{\lambda +1} j^{\beta}) \right) }{ \log 3 } -1.
\end{equation*}
Plugging the above expression into the second last display, it follows after an arrangement that
\begin{equation*}
T_2 \leq C j^{p} e^{-\lambda j} n^{ - \frac{p}{\beta j} } = C e^{p \log j} e^{ - \lambda j - \frac{p}{\beta j} \log n }.
\end{equation*}
By the inequality of arithmetic and geometric means, $\lambda j + \frac{p}{\beta j} \log n \geq 2 \sqrt{ p \lambda \log n /\beta }$. Furthermore, $\log j \leq \log m$, since $j \leq m$. Thus,
\begin{equation*}
T_2 \leq C e^{p \log m} e^{-2 \sqrt{  p \lambda \log n /\beta }}.
\end{equation*}
In summary,
\begin{equation*}
T_2 \leq 
\begin{cases}
C n^{-1/2} \log n, & \text{if $ j \leq \lfloor 2p/\beta \rfloor $} \\
C e^{p \log m} e^{-2 \sqrt{  p \lambda \log n /\beta }}, & \text{if $ j > \lfloor 2p/\beta \rfloor $}
\end{cases},
\end{equation*}
and thus,
\begin{equation}
T_2 \leq C \left( e^{p \log m} e^{-2 \sqrt{  p \lambda \log n /\beta }} + n^{-1/2} \log n \right).  \label{equp:sumsplit2}
\end{equation}

What remains is $T_3$. Note that
\begin{align}
T_3 &\leq \sum_{k=\overline{l}_{j} \vee 0}^{  k_{j}^{*} } 3^{-kp} \times 2 \mathcal{L}( \xi ) ( \mathfrak{N}_j ) 
\leq \sum_{k=\overline{l}_{j} \vee 0}^{  k_{j}^{*} } 3^{-kp} \times 2  K_1 e^{- \lambda j}  \notag  \\
&\leq 2K_1 \times \frac{ 3^{ - ( \overline{l}_{j} \vee 0 ) p } }{ 1 - 3^{-p} } \times e^{- \lambda j} \leq C \, 3^{ - \overline{l}_{j} p } e^{ -\lambda j }.  \label{equp:midpointT3}
\end{align}
By the definition of $ \overline{l}_{j} $, we have
\begin{equation*}
\overline{l}_{j} > \frac{1}{\beta} \, \frac{ \log \left( n^{\frac{1}{j}}/ (2 e^{\lambda +1} j^{\beta}) \right) }{ \log 3 } -2.
\end{equation*}
Therefore,
\begin{align}
3^{ - \overline{l}_{j} p } e^{ -\lambda j } &\leq 3^{2p} n^{ - \frac{p}{\beta j} } \left( 2 e^{\lambda+1} j^{\beta} \right)^{p/\beta } e^{-\lambda j} \leq C j^{p} e^{-\lambda j} n^{ - \frac{p}{\beta j} }  \notag\\
&  \leq C e^{ p \log m }  e^{ - \lambda j - \frac{p}{\beta j} \log n } \leq C e^{ p \log m } e^{-2 \sqrt{  p \lambda \log n /\beta }} ,  \label{equp:acontrol3power}
\end{align}
where the last inequality is a consequence of the inequality $\lambda j + \frac{p}{\beta j} \log n \geq 2 \sqrt{ p \lambda \log n /\beta }$. By combining \eqref{equp:midpointT3} with \eqref{equp:acontrol3power}, we obtain
\begin{equation}
T_3 \leq C e^{ p \log m } e^{-2 \sqrt{  p \lambda \log n /\beta }}.   \label{equp:sumsplit3}
\end{equation}
Then, combining \eqref{equp:sumsplitmain}, \eqref{equp:sumsplit1}, \eqref{equp:sumsplit2}, and \eqref{equp:sumsplit3}, and substituting $\beta$ with $\mathrm{dim}_{M}(S) + \kappa$, it follows that
\begin{eqnarray*}
&& \sum_{k=0}^{k^{*}_{j}} 3^{-kp} \left( 2 \mathcal{L}( \xi ) ( \mathfrak{N}_j ) \right) \wedge \left( \sqrt{ \frac{ \mathcal{M}_{3^{-(k+1)}}( \mathfrak{N}_{j} ) \mathcal{L}( \xi ) ( \mathfrak{N}_j ) }{n} } \right) \\
&\leq& C\left( e^{p \log m} e^{-2 \sqrt{  \frac{ p\lambda}{\mathrm{dim}_{M}(S) + \kappa}  } \sqrt{\log n} } + n^{-1/2} \log n + n^{-1/2} e^{c m \log m} \right).
\end{eqnarray*}

Finally, we estimate $3^{-k_{j}^{*} p}  \mathcal{L}( \xi )( \mathfrak{N}_j )$. Recalling the definition of $k_{j}$ and applying \eqref{equp:acontrol3power}, we have
\begin{align*}
3^{-k_{j}^{*} p}  \mathcal{L}( \xi )( \mathfrak{N}_j ) \leq 3^{ - \overline{l}_{j} p } \times K_1 e^{ -\lambda j} \leq C e^{ p \log m } e^{-2 \sqrt{  p \lambda \log n /\beta }} = C e^{p \log m} e^{-2 \sqrt{  \frac{ p\lambda}{\mathrm{dim}_{M}(S) + \kappa}  } \sqrt{\log n} }.
\end{align*}
Combining the last two displays, we conclude the proof.
\end{proof}

With the above preparation, we are now ready to establish an upper bound on the expected Wasserstein distance between $\widehat{\mathcal{L}}_{n}(\eta)$ and $\mathcal{L}(\eta)$, which constitutes the main result of this section.

\begin{theorem} \label{thm:upperbound}
Suppose Assumptions (A1) and (A2) hold. For any $\kappa >0$, there exists a constant $C$ depending on $\mathrm{diam}(S)$, $ \mathrm{dim}_{M}(S) $, $K_1$, $\alpha$, $\kappa$, $p$, and $\lambda$ such that, for sufficiently large $n$,
\begin{equation*}
E\left[ W_{p} \left( \widehat{\mathcal{L}}_{n}(\eta), \mathcal{L}(\eta) \right) \right] \leq C e^{ (1 + \frac{1}{2p}) \log \log n } e^{-2 \sqrt{  \frac{ \lambda}{ p(\mathrm{dim}_{M}(S) + \kappa)}  } \sqrt{\log n} }.
\end{equation*}
Consequently, for any $\kappa >0$
\begin{equation*}
E\left[ W_{p} \left( \widehat{\mathcal{L}}_{n}(\eta), \mathcal{L}(\eta) \right) \right] \leq C  e^{-2 \sqrt{  \frac{ \lambda}{ p(\mathrm{dim}_{M}(S) + 2 \kappa)}  } \sqrt{\log n} },
\end{equation*}
for sufficiently large $n$.
\end{theorem}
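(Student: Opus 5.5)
The plan is to combine the truncation Lemma~\ref{lm:truncation} with the telescoping bounds of Lemmas~\ref{lm:exactboundWassersteinxi} and~\ref{lm:onetermevaluation}, and then choose the truncation level $m$ as a function of $n$ to balance the terms. Write $\beta = \mathrm{dim}_M(S)+\kappa$. By Lemma~\ref{lm:truncation} and Jensen's inequality,
\begin{equation*}
E\left[W_p\left(\widehat{\mathcal{L}}_n(\eta),\mathcal{L}(\eta)\right)\right] \le \left(E\left[W_p^p\left(\widehat{\mathcal{L}}_n(\xi),\mathcal{L}(\xi)\right)\right]\right)^{1/p} + 2(\mathrm{diam}(S)+\alpha)\left(E\left[|\eta|^p\mathds{1}_{\{|\eta|>m\}}\right]\right)^{1/p}.
\end{equation*}
Under (A2), the tail term satisfies $E[|\eta|^p\mathds{1}_{\{|\eta|>m\}}]\le K_1\sum_{i>m} i^p e^{-\lambda i}\le C m^p e^{-\lambda m}$. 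Its $p$-th root is therefore $C m e^{-\lambda m/p}$.

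For the main term, I insert the choice $k_j^*$ of Lemma~\ref{lm:onetermevaluation} into Lemma~\ref{lm:exactboundWassersteinxi}. After summing over $1\le j\le m$ with the weight $j^p\le m^p$, this gives
\begin{equation*}
E\left[W_p^p\left(\widehat{\mathcal{L}}_n(\xi),\mathcal{L}(\xi)\right)\right] \le C m^{p+1}\left(m^p e^{-2\sqrt{p\lambda\log n/\beta}} + n^{-1/2}\log n + n^{-1/2}e^{cm\log m}\right).
\end{equation*}
Taking the $p$-th root, the dominant piece is $C m^{2+1/p}\,e^{-\frac{2}{p}\sqrt{p\lambda\log n/\beta}} = C m^{2+1/p} e^{-2\sqrt{\lambda\log n/(p\beta)}}$. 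This already has the target exponential rate. The remaining pieces $n^{-1/(2p)}(\log n)^{1/p}$ and $n^{-1/(2p)}e^{cm\log m/p}$ have to be shown to be of lower order.

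Next comes the choice of $m$. I take $m \asymp \sqrt{\log n}$, specifically $m=\lceil C_0\sqrt{\log n}\rceil$ with $C_0$ large. Then the tail term $m e^{-\lambda m/p}$ is at most $\sqrt{\log n}\,e^{-\lambda C_0\sqrt{\log n}/p}$. This is dominated by $e^{-2\sqrt{\lambda\log n/(p\beta)}}$ once $\lambda C_0/p > 2\sqrt{\lambda/(p\beta)}$. At the same time, $e^{cm\log m}=e^{O(\sqrt{\log n}\log\log n)}=n^{o(1)}$, so $n^{-1/(2p)}e^{cm\log m/p}$ decays like a power of $n$ and is negligible against $e^{-c'\sqrt{\log n}}$; the term $n^{-1/(2p)}(\log n)^{1/p}$ is negligible for the same reason. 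The polynomial prefactor in $m$ becomes a power of $\log n$, i.e.\ $e^{a\log\log n}$.

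The exponent $1+\frac{1}{2p}$ in the statement suggests that a finer bookkeeping is needed to avoid the crude factor $m^{p+1}\cdot m^p$. The source of the looseness is that the $j$-sum over the $e^{-2\sqrt{\cdot}}$ terms does not lose a full factor $m$. Indeed, by the AM--GM step in Lemma~\ref{lm:onetermevaluation}, only $j$ near $\sqrt{p\log n/(\lambda\beta)}$ contribute at full size $j^pe^{-\lambda j-\frac{p}{\beta j}\log n}$, and a Gaussian-type window of width $\asymp(\log n)^{1/4}$ around that point contributes. I would therefore redo the sum $\sum_j j^p\cdot j^p e^{-\lambda j - p\log n/(\beta j)}$ directly, using the bound $C j^pe^{-\lambda j}n^{-p/(\beta j)}$ obtained inside that proof rather than the cruder $e^{p\log m}$ version. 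This gives roughly $(\log n)^{p+1/4}$ (with the $j^{2p}$ weight bookkept carefully), and its $p$-th root yields a $\log\log n$ prefactor of order close to $1+\frac{1}{2p}$.

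Matching the exact exponent is the step I expect to be the main obstacle. If it fails, any fixed power of $\log n$ still suffices for the second assertion, which is the main point. Writing $(\log n)^a = e^{a\log\log n}$, one uses $e^{a\log\log n}e^{-2\sqrt{\lambda\log n/(p\beta)}}\le C e^{-2\sqrt{\lambda\log n/(p(\beta+\kappa))}}$, since $\sqrt{1/\beta}-\sqrt{1/(\beta+\kappa)}>0$ and $\sqrt{\log n}$ dominates $\log\log n$. This gives the statement with $\mathrm{dim}_M(S)+2\kappa$.
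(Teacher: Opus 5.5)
Your argument is the paper's proof. It uses truncation via Lemma~\ref{lm:truncation}, Jensen, Lemmas~\ref{lm:exactboundWassersteinxi} and~\ref{lm:onetermevaluation} summed with the crude weight $j^p\le m^p$, the tail bound $Cm^pe^{-\lambda m}$, and $m\asymp\sqrt{\log n}$. The paper takes $m=\lfloor 2\sqrt{p/(\lambda\beta)}\sqrt{\log n}\rfloor$ with $\beta=\mathrm{dim}_M(S)+\kappa$, which exactly balances the tail term. Your $m=\lceil C_0\sqrt{\log n}\rceil$ also works once $C_0\ge 2\sqrt{p/(\lambda\beta)}$.

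The ``main obstacle'' you flag does not exist. The crude factor is $m^{p+1}\cdot m^{p}=m^{2p+1}$. With $m\asymp\sqrt{\log n}$, its $p$-th root is $m^{2+1/p}\asymp(\log n)^{1+\frac{1}{2p}}=e^{(1+\frac{1}{2p})\log\log n}$, which is exactly the stated prefactor. This is precisely how the paper obtains it: it first shows $E[W_p^p]\le Ce^{(p+\frac12)\log\log n}e^{-2\sqrt{p\lambda\log n/\beta}}$ and then applies Jensen.

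So the Gaussian-window refinement is unnecessary. The first assertion already follows from the bound you displayed, and the second follows as you describe.
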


\begin{proof}
In the rest of the proof, we use $c$ and $C$ to denote constants that depend on $\mathrm{diam}(S)$, $ \mathrm{dim}_{M}(S) $, $K_1$, $\alpha$, $\kappa$, $p$, and $\lambda$. Note that the specific values of $c$ and $C$ may vary from line to line. Furthermore, since the second assertion follows directly from the first, it suffices to prove the first.

We begin the proof by setting 
\begin{equation*}
m = m(n) := \left \lfloor 2\sqrt{ \frac{p}{\lambda ( \mathrm{dim}_{M}(S) + \kappa )}  } \sqrt{\log n } \right \rfloor.
\end{equation*}
Therefore,
\begin{equation}
2\sqrt{ \frac{p}{\lambda ( \mathrm{dim}_{M}(S) + \kappa )}  } \sqrt{\log n } -1 < m \leq 2\sqrt{ \frac{p}{\lambda ( \mathrm{dim}_{M}(S) + \kappa )}  } \sqrt{\log n }.  \label{equp:mbound}
\end{equation}
For the specific value of $m$, applying Lemma \ref{lm:truncation} yields
\begin{equation}
E\left[ W_{p} \left( \widehat{\mathcal{L}}_{n}(\eta), \mathcal{L}(\eta) \right) \right]
\leq E \left[ W_{p} \left( \widehat{\mathcal{L}}_{n}(\xi),  \mathcal{L} (\xi) \right) \right] + 2 \left( \mathrm{diam}(S) + \alpha \right) \left( E\left[ |\eta|^{p} \mathds{1}_{  \{ |\eta| >m \} } \right] \right)^{\frac{1}{p}}.   \label{equp:Wupboundmain}
\end{equation}
Combining Lemma \ref{lm:exactboundWassersteinxi} with Lemma \ref{lm:onetermevaluation}, we have
\begin{eqnarray*}
&& E \left[ W_{p}^{p} \left( \widehat{\mathcal{L}}_{n}(\xi),  \mathcal{L} (\xi) \right) \right]  \\
&\leq& C \sum_{j=1}^{m} j^{p} \left( e^{p \log m} e^{-2 \sqrt{  \frac{ p\lambda}{\mathrm{dim}_{M}(S) + \kappa}  } \sqrt{\log n} } + n^{-1/2} \log n + n^{-1/2} e^{c m \log m} \right)  \\
&\leq& C m^{p+1} \left( e^{p \log m} e^{-2 \sqrt{  \frac{ p\lambda}{\mathrm{dim}_{M}(S) + \kappa}  } \sqrt{\log n} } + n^{-1/2} \log n + n^{-1/2} e^{c m \log m} \right) \\
&=& C \left( e^{(2p+1) \log m} e^{-2 \sqrt{  \frac{ p\lambda}{\mathrm{dim}_{M}(S) + \kappa}  } \sqrt{\log n} } +  e^{(p+1) \log m} n^{-1/2} \log n + n^{-1/2} e^{c m \log m + (p+1) \log m} \right).
\end{eqnarray*}
Together with the upper bound of $m$ in \eqref{equp:mbound}, it follows after a rearrangement that
\begin{equation*}
E \left[ W_{p}^{p} \left( \widehat{\mathcal{L}}_{n}(\xi),  \mathcal{L} (\xi) \right) \right]
\leq C e^{ (p + \frac{1}{2}) \log \log n } e^{-2 \sqrt{  \frac{ p\lambda}{\mathrm{dim}_{M}(S) + \kappa}  } \sqrt{\log n} },
\end{equation*}
for sufficiently large $n$. Applying Jensen's inequality gives
\begin{equation}
E \left[ W_{p} \left( \widehat{\mathcal{L}}_{n}(\xi),  \mathcal{L} (\xi) \right) \right]
\leq C e^{ (1 + \frac{1}{2p}) \log \log n } e^{-2 \sqrt{  \frac{ \lambda}{ p(\mathrm{dim}_{M}(S) + \kappa)}  } \sqrt{\log n} }. \label{equp:Wupbound1}
\end{equation}
Furthermore, noting that $P(|\eta| = j) \leq K_1 e^{-\lambda j}$, we have for sufficiently large $n$,
\begin{align*}
E\left[ |\eta|^{p} \mathds{1}_{  \{ |\eta| >m \} } \right] &= \sum_{j= m+1}^{\infty} j^{p} P ( |\eta| = j ) \leq \sum_{j= m+1}^{\infty} j^{p} \times K_1 e^{ -\lambda j }  \\
&= K_1 m^{p} e^{-\lambda m} \sum_{j = m+1}^{\infty} \left( \frac{j}{m} \right)^{p} e^{-\lambda(j-m)} \\
&= K_1 m^{p} e^{-\lambda m} \sum_{k = 1}^{\infty} \left( 1 + \frac{k}{m} \right)^{p} e^{ -\lambda k } \\
&\leq C m^{p} e^{-\lambda m},
\end{align*}
where in the last equality, we substitute $j$ with $m + k$, and in the final inequality, we use the fact that the series $\{(1 + k/m)^{p} e^{-\lambda k}\}_{k=1}^{\infty}$ converges for all $m \geq 1$ with a uniform upper bound. Combining the last display with \eqref{equp:mbound}, we obtain
\begin{align*}
E\left[ |\eta|^{p} \mathds{1}_{  \{ |\eta| >m \} } \right]
&\leq C \left( 2\sqrt{ \frac{p}{\lambda ( \mathrm{dim}_{M}(S) + \kappa )}  } \sqrt{\log n } \right)^{p} e^{ - \lambda \left(2 \sqrt{ \frac{p}{\lambda ( \mathrm{dim}_{M}(S) + \kappa )}  } \sqrt{\log n } -1\right) }  \\
& \leq C e^{ \frac{p}{2} \log \log n } e^{-2 \sqrt{  \frac{ p\lambda}{\mathrm{dim}_{M}(S) + \kappa}  } \sqrt{\log n} }.
\end{align*}
Therefore,
\begin{equation}
\left( E\left[ |\eta|^{p} \mathds{1}_{  \{ |\eta| >m \} } \right] \right)^{\frac{1}{p}}
\leq C e^{ \frac{1}{2} \log \log n } e^{-2 \sqrt{  \frac{ \lambda}{p(\mathrm{dim}_{M}(S) + \kappa})  } \sqrt{\log n} }  \label{equp:Wupbound2}
\end{equation}
Combining \eqref{equp:Wupboundmain}, \eqref{equp:Wupbound1}, and \eqref{equp:Wupbound2}, the desired result follows.
\end{proof}

\begin{remark}
Let $S = [0, b]$ with the Euclidean metric for some $b > 0$, and suppose that Assumption (A2) holds. It is straightforward to verify that $ \mathcal{M}_{\varepsilon/m}( S ) \leq 2bm/\varepsilon $ for $\varepsilon \leq b$. Plugging this into \eqref{eqc:coveringnumberbound}, we obtain a refined upper bound for the covering number of $\mathfrak{N}_m$, i.e.,
\begin{equation*}
\mathcal{M}_{\varepsilon}( \mathfrak{N}_m ) \leq e^{m} \left( 1 + \frac{2b}{\varepsilon} \right)^{m}.
\end{equation*}
By replicating the arguments used in the proofs of Lemma \ref{lm:onetermevaluation} and Theorem \ref{thm:upperbound}, we can improve the upper bound in Theorem \ref{thm:upperbound} and obtain
\begin{equation*}
E\left[ W_{p} \left( \widehat{\mathcal{L}}_{n}(\eta), \mathcal{L}(\eta) \right) \right] \leq C e^{ ( \frac{1}{2} + \frac{1}{2p}) \log \log n } e^{ -2 \sqrt{ \frac{\lambda}{p} } \sqrt{\log n} },
\end{equation*}
where $C$ is a constant depending on $\mathrm{diam}(S)$, $K_1$, $\alpha$, $p$, and $\lambda$.
\end{remark}

\subsection{Examples: Poisson point processes and Hawkes process}  \label{subsec:exampleupperbound}
Now, we consider $\eta$ to be either a Poisson point process, or a linear Hawkes process, and present results on the rates of convergence of the corresponding empirical measures.

For a Poisson process, we have $P(|\eta| = m)= \frac{e^{-\lambda} \lambda^{m}}{m!}$, where $\lambda = E[\eta(S)]$. Applying Stirling’s approximation yields $P(|\eta| = m) \leq C \frac{e^{-\lambda}}{\sqrt{2 \pi m}}(\frac{\lambda e}{m})^{m}$ for some constant $C$. Note that $\left( (\lambda e) / m \right)^{m}$ decays much faster than $e^{-m}$ as in Assumption (A2). Thus, a sharper upper bound for the expected Wasserstein distance shall be obtained by directly applying the above inequality in the proofs of Lemma \ref{lm:onetermevaluation} and Theorem \ref{thm:upperbound}, rather than first establishing a universal constant $K_1$ and subsequently invoking Theorem \ref{thm:upperbound}.

To derive the upper bound, we use similar techniques as used in Lemma \ref{lm:onetermevaluation} and Theorem \ref{thm:upperbound}. We omit the intermediate rearrangements and calculations, presenting only the essential ingredients of the argument. Unless stated otherwise, all notation adheres to the definitions introduced previously. Redefine 
\begin{equation}
\overline{l}_{j} := \left\lfloor  \frac{1}{\mathrm{dim}_{M}(S) + \kappa}  \frac{ \log \left(  \lambda  n^{\frac{1}{j}}  /\left( 2j^{ \mathrm{dim}_{M}(S) + \kappa +1 +\frac{1}{2j} } \right)\right)  }{ \log 3} \right\rfloor -1.
\label{equp:loverline_poisson}
\end{equation}
Subsequently, we have
\begin{equation}
T_1 \leq C n^{-1/2} e^{cm \log m  }, \label{equp:sumsplit1_poisson}
\end{equation}
\begin{align}
T_2 &\leq C \left( \sup_{j \in \mathbb{N}^{+} } e^{ \left( p + \frac{p}{\beta} -\frac{1}{2} \right) \log j + \left( \log \lambda +1 \right) j  - j \log j - \frac{p}{\beta j} \log n } + n^{-1/2} \log n \right) \nonumber \\
&\leq C \left(  e^{ - (1- \chi) \sqrt{ \frac{2p}{\beta}  } \sqrt{\log n  \log \log n}  }  +  n^{-1/2} \log n  \right)
\label{equp:sumsplit2_poisson}
\end{align}
and
\begin{align}
T_3 &\leq C \sup_{j \in \mathbb{N}^{+} } e^{ \left( p + \frac{p}{\beta} -\frac{1}{2} \right) \log j + \left( \log \lambda +1 \right) j  - j \log j - \frac{p}{\beta j} \log n }  \nonumber  \\
&\leq C   e^{ - (1- \chi) \sqrt{ \frac{2p}{\beta}  } \sqrt{\log n \log \log n}  }   ,
\label{equp:sumsplit3_poisson}
\end{align}
for sufficiently large $n$, for any $0 < \chi <1$. Notably, the constants $C$ appearing in \eqref{equp:sumsplit2_poisson} and \eqref{equp:sumsplit3_poisson} depend on $\chi$.
Then, combining \eqref{equp:sumsplit1_poisson}, \eqref{equp:sumsplit2_poisson}, and \eqref{equp:sumsplit3_poisson}, and substituting $\beta$ with $\mathrm{dim}_{M}(S) + \kappa$, it follows that
\begin{eqnarray*}
&& \sum_{k=0}^{k^{*}_{j}} 3^{-kp} \left( 2 \mathcal{L}( \xi ) ( \mathfrak{N}_j ) \right) \wedge \left( \sqrt{ \frac{ \mathcal{M}_{3^{-(k+1)}}( \mathfrak{N}_{j} ) \mathcal{L}( \xi ) ( \mathfrak{N}_j ) }{n} } \right) \\
&\leq& C\left( e^{ - (1- \chi) \sqrt{ \frac{2p}{\mathrm{dim}_{M}(S) + \kappa} } \sqrt{ \log n \log \log n}  } + n^{-1/2} \log n + n^{-1/2} e^{c m \log m} \right).
\end{eqnarray*}
Similarly,
\begin{equation*}
3^{-k_{j}^{*} p}  \mathcal{L}( \xi )( \mathfrak{N}_j ) \leq  C  e^{ - (1- \chi) \sqrt{ \frac{2p}{\mathrm{dim}_{M}(S) + \kappa} } \sqrt{ \log n \log \log n}  }.
\end{equation*}
Combining the above two displays, we have 
\begin{eqnarray*}
&& 3^{-k_{j}^{*} p}  \mathcal{L}( \xi )( \mathfrak{N}_j ) + \sum_{k=0}^{k^{*}_{j}} 3^{-kp} \left( 2 \mathcal{L}( \xi ) ( \mathfrak{N}_j ) \right) \wedge \left( \sqrt{ \frac{ \mathcal{M}_{3^{-(k+1)}}( \mathfrak{N}_{j} ) \mathcal{L}( \xi ) ( \mathfrak{N}_j ) }{n} } \right) \\
&\leq& C\left( e^{ - (1- \chi) \sqrt{ \frac{2p}{\mathrm{dim}_{M}(S) + \kappa} } \sqrt{ \log n \log \log n}  } + n^{-1/2} \log n + n^{-1/2} e^{c m \log m} \right).
\end{eqnarray*}
Redefine
\begin{equation*}
m = m(n) := \left \lceil  \sqrt{\frac{2p}{ \mathrm{dim}_{M}(S) + \kappa } } \sqrt{\log n}  \right \rceil.
\end{equation*}
Then, it follows that
\begin{equation}
E \left[ W_{p} \left( \widehat{\mathcal{L}}_{n}(\xi),  \mathcal{L} (\xi) \right) \right]
\leq C  e^{ - (1- \chi) \sqrt{ \frac{2}{p(\mathrm{dim}_{M}(S) + \kappa)} } \sqrt{ \log n \log \log n } },
\label{equp:Wupbound1_poisson}
\end{equation}
for sufficiently large $n$. Furthermore, note that
\begin{align*}
E\left[ |\eta|^{p} \mathds{1}_{  \{ |\eta| >m \} } \right] \le C e^{ \left(p  - \frac{1}{2} \right) \log m + (1 + \log \lambda) m - m \log m  } .
\end{align*}
Plugging in the expression of $m$, the leading term is $e^{-\sqrt{\frac{p}{2(\mathrm{dim}_{M}(S) + \kappa)}} \sqrt{\log n }  \log \log n }$, and hence,
\begin{equation*}
E\left[ |\eta|^{p} \mathds{1}_{  \{ |\eta| >m \} } \right] \leq C  e^{ - (1- \chi) \sqrt{ \frac{2p}{\mathrm{dim}_{M}(S) + \kappa} } \sqrt{ \log n  \log \log n} },
\end{equation*}
for sufficiently large $n$. Together with \eqref{equp:Wupbound1_poisson}, we conclude that  
\begin{equation*}
E\left[ W_{p} \left( \widehat{\mathcal{L}}_{n}(\eta), \mathcal{L}(\eta) \right) \right] \leq C  e^{ - (1- \chi) \sqrt{ \frac{2}{ p (\mathrm{dim}_{M}(S) + \kappa)} } \sqrt{ \log n \log \log n}  },
\end{equation*}
for sufficiently large $n$. Note that the presence of the term $\sqrt{\log n \log \log n}$ in the exponent provides a slightly sharper bound compared to the term $\sqrt{\log n}$ in Theorem \ref{thm:upperbound}.

Before introducing the second example on the Hawkes process, it is helpful to recall the Galton–Watson process. The Galton–Watson process is a classical discrete-time branching model that describes the evolution of a population across successive generations \cite{athreya2004branching}. Although it is not itself a point process, the number of points generated in certain point processes may exhibit a Galton–Watson–type branching structure. For a Galton–Watson process, the distribution of the total progeny (the total number of points generated from one ancestor including itself) and the rate of convergence to extinction are determined by its offspring distribution. Suppose that $X$ denotes the total progeny of a Galton–Watson process with a single ancestor at time $0$ and an offspring distribution given by $\mathrm{Poisson}(\mu)$ with $\mu < 1$. Then, by Dwass’s theorem \cite{jagers1975branching}, the distribution of $X$ is the Borel distribution:
\begin{equation*}
P(X = m) = \frac{e^{-\mu m}(\mu m)^{m-1}}{m!}.
\end{equation*}
Furthermore, the moment generating function is $M_{X}(z) = \frac{1}{\mu} g^{-1}(\mu e^{-(\mu - z)})$ for $z \le \mu - 1 - \log \mu$, where $g(x) = xe^{-x}$ for $x \in [0,1]$. Thus, the moment generating function is bounded above by $1/\mu$. 

We proceed to investigate a specific point process---the linear Hawkes process. This process has been widely employed to model self-exciting phenomena, such as earthquake aftershocks, financial contagion, and neuronal spike trains, where the occurrence of one event increases the likelihood of subsequent events in its vicinity. Formally, a Hawkes process $N_t$ with baseline intensity $\nu > 0$ and kernel $h:\mathbb{R}^+ \to \mathbb{R}^+$ is the simple point process with stochastic intensity
\begin{equation*}
  \lambda_t \;=\; \nu \;+\; \int_{0}^{t-} h(t-s)\, \mathrm{d}N_s.
\end{equation*} 
Let $\mu := \int_0^\infty h(u)\,du < 1$. In particular, the linear Hawkes process on the real line can be viewed as an immigration–birth system \cite{Hawkes1974,hawkesreview}. Immigrants arrive according to a homogeneous Poisson process with constant rate $\nu$. Each immigrant produces offspring according to a Poisson distribution with parameter $\mu$. Each child reproduces according to the same rule, independently of others, and immigrant families evolve independently. All offspring of a given immigrant, together with the immigrant itself, form a single cluster. Let $S(T)$ denote the number of descendants of an immigrant arriving at time $0$ on the interval $[0,T]$, including the immigrant itself. It is known that $S(\infty)$, the total cluster size, coincides with the total progeny of a Galton–Watson process with a single ancestor and Poisson$(\mu)$ offspring distribution. Let $N(T)$ denote the number of points of the Hawkes process on $[0,T]$. Then, it follows from \cite{Gao2021,ZHU2013} that
\begin{equation*}
E[e^{z N(T)}] = e^{\nu \int_{0}^{T}(E[e^{z S(t)}] - 1)dt}, \quad \text{for $z \le \mu - 1 - \log \mu$},
\end{equation*}
which is bounded above since $S(t)$ is non-decreasing almost surely and $E[e^{zS(\infty)}]$ is bounded above by $1/\mu$. Thus, by Chernoff bound, the Hawkes process on $[0,T]$ exhibits exponential tails for any $z \le \mu - 1 - \log \mu$. Consequently, Assumption (A2) is satisfied. Furthermore, since we are considering point processes on the bounded interval $[0,T]$, Assumption (A1) holds automatically. Therefore, Theorem \ref{thm:upperbound} can be applied directly to establish the upper bound for the rate of convergence of the empirical measure.

\section{Lower bounds for the Wasserstein distance}  \label{sec:lowerbound}
In the previous section, we develop the upper bounds on the rates of convergence of empirical measures for a specific class of proper point processes. Next, we will establish lower bounds on the rates of convergence. A key tool in characterizing this class is the Janossy measure, which will be introduced in the following subsection. In the following subsection, for detailed discussion, see \cite{Daley2003introduction} and \cite{Last_Penrose_2017}.

\subsection{Janossy measures}
Suppose $\mu \in \mathfrak{N}$ is a finite counting measure on $S$ given by
\begin{equation*}
  \mu = \sum_{i=1}^{k} \delta_{x_i},
\end{equation*}
for some $k \in \{0\} \cup \mathbb{N}^{+}$ and (not necessarily distinct) points $x_1, x_2, \dots, x_k \in S$. Then we define a counting measure $ \mu^{(m)}$ on $S^{m}$ by
\begin{equation*}
  \mu^{(m)} (C) = \sideset{}{_{}^{\neq}} \sum_{ 1 \leq i_1 , \dots, i_m \leq k } \mathds{1}_{ \{ (x_{i_1}, \dots, x_{i_m}  ) \in C \} },
\end{equation*}
for any Borel set $C \subseteq S^{m}$, where the superscript $\neq$ indicates summation over $m$-tuples with pairwise difference entries (i.e., $i_1, \dots, i_m$ are pairwise distinct) and where an empty sum is defined as zero. In other words, this means that
\begin{equation*}
  \mu^{(m)} = \sideset{}{_{}^{\neq}} \sum_{ 1 \leq i_1 , \dots, i_m \leq k } \delta_{ ( x_{i_1}, \dots, x_{i_m} ) }.
\end{equation*}

Let $\eta$ be a finite proper point process on $S$. Then any realization $\eta(\omega)$ of $\eta$ is a finite counting measure on $S$. Therefore, if $m \geq 1$, the Janossy measure of order $m$ of $\eta$ is the measure on $S^{m}$ defined by 
\begin{equation*}
  J_{\eta, m} (\cdot) := \frac{1}{m!} E\left[ \mathds{1}_{ \{ |\eta| =m \} } \eta^{(m)} (\cdot) \right].
\end{equation*}
If $m = 0$, the number $J_{\eta, 0} := P\left( |\eta| =0 \right)$ is called the Janossy measure of order $0$.

It is straightforward to verify that the Janossy measures $J_{\eta, m}$ are symmetric and for $m \geq 1$
\begin{equation*}
  J_{\eta, m}( S^{m} ) = P \left( |\eta| =m \right).
\end{equation*}
Therefore, if $P( |\eta| =m ) >0$, we define the normalized Janossy measure of order $m$ to be 
\begin{equation*}
  \widetilde{J}_{\eta, m} (\cdot) := \frac{1}{  J_{\eta, m}( S^{m} ) }  J_{\eta, m}( \cdot ).
\end{equation*}
Thus, $ \widetilde{J}_{\eta, m} $ is a probability measure on $S^{m}$.

With the above preparation, we now proceed to formulate the assumption defining the specific class of point processes.

\begin{enumerate}[(A3)]
\item Suppose $\eta$ is a finite proper point process. For any $m \geq 1$ such that $P(|\eta| =m) >0$, the normalized Janossy measure of order $m$ satisfies
\begin{equation*}
  \widetilde{J}_{\eta, m} (C) \leq \theta^{m}(C)
\end{equation*}
for any Borel set $C \subseteq S^{m}$, where $\theta$ is a positive finite measure on $S$. Moreover, $\theta$ satisfies a local upper bound condition: there exists $\sigma > 0$ such that
\begin{equation*}
  \theta \left( \overline{B_{\rho}\left(x, \varepsilon\right)} \right) \leq K_2 \varepsilon^{\sigma},
\end{equation*}
for all $x \in S$ and $\varepsilon >0 $.
\end{enumerate}

\subsection{A lower bound}
In this subsection, we establish a lower bound for $W_p(\widehat{\mathcal{L}}_{n}(\eta), \mathcal{L}(\eta))$ that holds both in expectation and almost surely. Importantly, the derivation does not rely on structural assumptions regarding the underlying metric space or its associated metric---such as compactness or boundedness. As a result, the analysis avoids the use of truncation and telescoping arguments required for the upper bound, leading to a simpler proof. The key ingredient is the following proposition, originally adapted from \cite{dudley1969speed} and cited in \cite{weed2019sharp}, which applies not only to empirical measures but to any probability measure supported on at most $n$ elements.

\begin{proposition} \label{prop:sixthWeed}
Suppose that there exist positive constants $\varepsilon'$, $\tau$, and $t$ such that 
\begin{equation*}
  \mathcal{M}_{\varepsilon}( \mu, \tau) \geq \varepsilon^{-t}
\end{equation*}
for all $\varepsilon \leq \varepsilon'$. If $n > \varepsilon'^{-t}$ and $\nu$ is any measure supported on at most $n$ points, then
\begin{equation*}
  W_{p}^{p}(\mu, \nu) \geq \tau 4^{-p} n^{-p/t}.
\end{equation*}
\end{proposition}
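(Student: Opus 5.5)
Fix $\varepsilon \in (0,\varepsilon']$, to be chosen in terms of $n$. Let $\nu$ be supported on a set $F$ of at most $n$ points. I would show that a set of $\mu$-mass at least $\tau$ lies at distance more than $\varepsilon/4$ from $F$. Any coupling of $\mu$ and $\nu$ must transport that mass over distance at least $\varepsilon/4$, which gives the lower bound $W_p^p(\mu,\nu) \ge \tau(\varepsilon/4)^p$. The whole argument is this covering-number / transport comparison, applied at one well-chosen scale.

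\textbf{Step 1 (covering the neighbourhood of $F$).} Let $U := \{y : r(y,F) \le \varepsilon/2\}$. This is covered by the $|F| \le n$ closed balls $\overline{B_r(x,\varepsilon/2)}$, $x \in F$, each of diameter at most $\varepsilon$. Hence $\mathcal{M}_{\varepsilon}(U) \le n$.

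\textbf{Step 2 (choice of scale).} Choose $\varepsilon$ with $\varepsilon^{-t} > n$, for instance $\varepsilon = (2n)^{-1/t}$ or any $\varepsilon$ slightly below $n^{-1/t}$. The hypothesis $n > \varepsilon'^{-t}$ gives $n^{-1/t} < \varepsilon'$, so such an $\varepsilon \le \varepsilon'$ exists. Then $\mathcal{M}_\varepsilon(\mu,\tau) \ge \varepsilon^{-t} > n \ge \mathcal{M}_\varepsilon(U)$. By the definition of $\mathcal{M}_\varepsilon(\mu,\tau)$, the inequality $\mu(U) \ge 1-\tau$ is impossible, so $\mu(U^c) > \tau$. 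Every $y \in U^c$ has $r(y,F) > \varepsilon/2$.

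\textbf{Step 3 (transport bound).} For any coupling $\gamma \in \Upsilon(\mu,\nu)$, $\gamma$ is concentrated on $X \times F$. Therefore
\[
\int r^p \, d\gamma \;\ge\; \int_{U^c \times F} r^p \, d\gamma \;\ge\; (\varepsilon/2)^p \, \mu(U^c) \;\ge\; \tau\, (\varepsilon/2)^p .
\]
Taking the infimum over $\gamma$ gives $W_p^p(\mu,\nu) \ge \tau (\varepsilon/2)^p$.

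\textbf{Main obstacle: matching the stated constant.} The loss from $\varepsilon/2$ to the stated $4^{-p} n^{-p/t}$ comes from choosing $\varepsilon$ with $\varepsilon^{-t} > n$ strictly. The limiting choice $\varepsilon \uparrow n^{-1/t}$ requires care, both with strict versus non-strict inequalities and with the infimum over sets $Y$ in the definition of $\mathcal{M}_\varepsilon(\mu,\tau)$. The cleanest fix is to take $\varepsilon = n^{-1/t}/2^{1/t}$, or simply $\varepsilon = \tfrac12 n^{-1/t}$, which lies in $(0,\varepsilon']$ with $\varepsilon^{-t} = 2^t n > n$. This yields
\[
W_p^p(\mu,\nu) \;\ge\; \tau\,(\varepsilon/2)^p \;=\; \tau\, 4^{-p} n^{-p/t},
\]
which is exactly the claimed bound.
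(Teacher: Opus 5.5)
Your argument is correct and is essentially the standard proof of this result. The paper does not prove it itself but imports it from Weed and Bach (after Dudley), and that proof runs as yours does: with $\varepsilon=\tfrac12 n^{-1/t}<\varepsilon'$, the closed $\varepsilon/2$-neighbourhood of the support of $\nu$ is covered by at most $n<\varepsilon^{-t}$ balls of diameter $\varepsilon$. Hence $\mu$ puts mass greater than $\tau$ outside that neighbourhood, and every coupling pays at least $\tau(\varepsilon/2)^p=\tau 4^{-p}n^{-p/t}$.

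One correction: only the choice $\varepsilon=\tfrac12 n^{-1/t}$ gives the stated constant. The alternative $\varepsilon=(2n)^{-1/t}$ yields only $\tau 2^{-p}(2n)^{-p/t}$, which is weaker than the claim when $t<1$, so drop that option.
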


To apply Proposition \ref{prop:sixthWeed} in our context, it is necessary to evaluate $\mathcal{M}_{\varepsilon}( \mathcal{L}(\eta), \tau )$. This is the primary objective of the following lemma.

\begin{lemma}  \label{lm:coveringlowerbound}
Let $\eta$ be a point process satisfying Assumption (A3), and define $\tau_m = P(|\eta| = m)/2$. Then, for any $m \in \mathbb{N}^{+}$ such that $\tau_m > 0$ and for any $ 0 <\kappa < \sigma $, we have
\begin{equation*}
\mathcal{M}_{\varepsilon}\left( \mathcal{L}(\eta), \tau_m  \right) \geq \varepsilon^{ - m(\sigma -\kappa) }
\end{equation*}
for all $ \varepsilon \leq \frac{  2^{\sigma/\kappa} }{K_2^{1/\kappa} (2 m!)^{1/(m \kappa)}} \wedge \alpha $.
\end{lemma}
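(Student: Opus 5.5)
Fix $m$ with $\tau_m>0$ and $\varepsilon$ in the stated range. Let $Y\subseteq\mathfrak{N}$ satisfy $\mathcal{L}(\eta)(Y)\ge 1-\tau_m$, and suppose $Y$ is covered by $N$ closed $D_1$-balls $\overline{B_{D_1}(\nu_i,\varepsilon/2)}$, $i=1,\dots,N$. The goal is to show $N\ge \varepsilon^{-m(\sigma-\kappa)}$.

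\textbf{Step 1: reduce to $\mathfrak{N}_m$.} Since $\mathcal{L}(\eta)(\mathfrak{N}_m)=2\tau_m$, we have $\mathcal{L}(\eta)(Y\cap\mathfrak{N}_m)\ge \tau_m$. Consider a ball that meets $\mathfrak{N}_m$. If its center $\nu_i$ had cardinality $\ne m$, then any $\mu\in\mathfrak{N}_m$ would satisfy $D_1(\mu,\nu_i)\ge\alpha$, because at least one point must be matched with $s_\alpha$. This is impossible when $\varepsilon/2<\alpha$, which holds since $\varepsilon\le\alpha$ gives $\varepsilon/2<\alpha$. So every relevant center lies in $\mathfrak{N}_m$, say $\nu_i=\sum_{l=1}^m\delta_{z^i_l}$.

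\textbf{Step 2: pull back to $S^m$ via the Janossy measure.} For $\mu=\sum_l\delta_{x_l}\in\mathfrak{N}_m$, the condition $D_1(\mu,\nu_i)\le\varepsilon/2$ means some permutation $\pi$ satisfies $\sum_l\rho(x_l,z^i_{\pi(l)})\le\varepsilon/2$. In particular $\rho(x_l,z^i_{\pi(l)})\le\varepsilon/2\le\varepsilon$ for every $l$. Hence the preimage of the ball under $\phi$ from Lemma~\ref{lm:coveringnumber} is contained in
\[
\bigcup_{\pi\in\Pi_m}\prod_{l=1}^m\overline{B_\rho(z^i_{\pi(l)},\varepsilon)} .
\]
Since $P(|\eta|=m)\,\widetilde J_{\eta,m}(\phi^{-1}(\cdot))$ equals $\mathcal{L}(\eta)$ restricted to $\mathfrak{N}_m$, Assumption (A3) gives, for each ball,
\[
\mathcal{L}(\eta)(\text{ball}\cap\mathfrak{N}_m)\le P(|\eta|=m)\, m!\,(K_2\varepsilon^\sigma)^m .
\]
The factor $m!$ comes from the union bound over permutations, and the product structure of $\theta^m$ turns each box into a product of ball masses.

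\textbf{Step 3: count.} Summing over the $N$ balls,
\[
\tfrac12 P(|\eta|=m)=\tau_m\le N\,P(|\eta|=m)\, m!\,K_2^m\varepsilon^{m\sigma},
\]
so $N\ge \bigl(2\,m!\,K_2^m\bigr)^{-1}\varepsilon^{-m\sigma}$. It remains to check that $\bigl(2\,m!\,K_2^m\bigr)^{-1}\varepsilon^{-m\kappa}\ge1$, that is,
\[
\varepsilon\le K_2^{-1/\kappa}(2\,m!)^{-1/(m\kappa)} .
\]
The stated threshold carries an extra factor $2^{\sigma/\kappa}$. This factor matches a version in which the radius $\varepsilon/2$ is used in the $\theta$-bound, giving $(\varepsilon/2)^{\sigma}$ in place of $\varepsilon^{\sigma}$. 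So I will keep the radius $\varepsilon/2$ throughout rather than relax it to $\varepsilon$. With $(K_2(\varepsilon/2)^\sigma)^m$ in Step 2, the requirement becomes $\varepsilon^{m\kappa}\le 2^{m\sigma}/(2\,m!\,K_2^m)$, which is exactly the stated bound. Taking the infimum over $Y$ then finishes the proof.

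\textbf{Main obstacle.} The delicate step is Step 2. Identifying $\mathcal{L}(\eta)$ on $\mathfrak{N}_m$ with the pushforward of $J_{\eta,m}$ requires care with the $1/m!$ normalization and with coincident points. One must also avoid double-counting permutations so that the constant stays at $m!$ and not $(m!)^2$. Because $\eta^{(m)}$ contains all $m!$ orderings while $J_{\eta,m}$ divides by $m!$, each unordered configuration receives total $\widetilde J$-mass matching its probability. This is the identification I will verify carefully.
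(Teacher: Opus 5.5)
Your proof is correct and follows the paper's argument essentially step for step. It uses the same mass bound $\mathcal{L}(\eta)(Y\cap\mathfrak{N}_m)\ge\tau_m$, the same observation that for $\varepsilon\le\alpha$ only balls centered in $\mathfrak{N}_m$ can meet $\mathfrak{N}_m$, the same identification of $\mathcal{L}(\eta)$ on $\mathfrak{N}_m$ with $J_{\eta,m}\circ\phi^{-1}$, and the same union bound of $\theta^m$ over $m!$ permuted product boxes of radius $\varepsilon/2$; keeping that radius at $\varepsilon/2$, as you ultimately do, gives exactly the factor $2^{\sigma/\kappa}$ in the stated threshold.
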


\begin{proof}
Recalling the definition of $\mathcal{M}_{\varepsilon} (\mu, \tau) $, we have
\begin{equation*}
  \mathcal{M}_{\varepsilon}\left( \mathcal{L}(\eta), \tau_m  \right) = \inf\{ \mathcal{M}_{\varepsilon}(\mathfrak{B}) : \mathcal{L}(\eta) (  \mathfrak{B}) \geq 1- \tau_{m} \}.
\end{equation*}
Therefore, it suffices to show that for any $\mathfrak{B} \subseteq \mathfrak{N}$ such that $\mathcal{L}(\eta) (  \mathfrak{B}) \geq 1- \tau_{m}$,
\begin{equation*}
\mathcal{M}_{\varepsilon}( \mathfrak{B} ) \geq \varepsilon^{-m(\sigma - \kappa)}
\end{equation*}
holds for all $\varepsilon \leq \frac{  2^{\sigma/\kappa} }{K_2^{1/\kappa} (2 m!)^{1/(m \kappa)}} \wedge \alpha$.

To establish the above result, we first claim that
\begin{equation}
  \mathcal{L}(\eta)( \mathfrak{B} \cap \mathfrak{N}_{m} ) \geq \frac{1}{2} \mathcal{L}(\eta) ( \mathfrak{N}_m) = \frac{1}{2} P( |\eta| = m ).  \label{eqlo:aboundfortaum}
\end{equation}
Otherwise, $ \mathcal{L}(\eta)( \mathfrak{B} \cap \mathfrak{N}_{m} ) < \frac{1}{2} \mathcal{L}(\eta) ( \mathfrak{N}_m) $. Thus,
\begin{align*}
\mathcal{L}(\eta)( \mathfrak{B} ) &= \sum_{j=0}^{\infty} \mathcal{L}(\eta) ( \mathfrak{B} \cap \mathfrak{N}_{j} ) =  \sum_{j \neq m} \mathcal{L}(\eta) ( \mathfrak{B} \cap \mathfrak{N}_{j} ) + \mathcal{L}(\eta) ( \mathfrak{B} \cap \mathfrak{N}_{m} )  \\
&<  \sum_{j \neq m} \mathcal{L}(\eta) ( \mathfrak{N}_{j} ) + \frac{1}{2} \mathcal{L}(\eta) ( \mathfrak{N}_m) = 1 - \frac{1}{2} \mathcal{L}(\eta) ( \mathfrak{N}_m) \\
& = 1- \tau_{m}.
\end{align*}
This is a contradiction.

In the nest step, we consider $ \mathcal{M}_{\varepsilon}( \mathfrak{B} \cap \mathfrak{N}_{m} ) $. For any $0 <\varepsilon \leq \alpha$, suppose $\mathcal{M}_{\varepsilon} ( \mathfrak{B} \cap \mathfrak{N}_{m} ) =k$. Then, there exist $k$ closed balls of diameter $\varepsilon$, denoted by $\overline{B_{D_1}( \mu_1, \varepsilon/2 )}, \dots, \overline{B_{D_1}( \mu_k, \varepsilon/2 )} $ with centers $\mu_1, \dots, \mu_k \in \mathfrak{N}$, that cover $\mathfrak{B} \cap \mathfrak{N}_{m}$. We claim that all of $\mu_1, \dots, \mu_k$ must belong to $\mathfrak{N}_m$. Otherwise, suppose $\mu_j \notin \mathfrak{N}_m$ for some $j$. However, recalling the definition of $D_1$, we have
\begin{equation*}
  D_{1} (\mu_j, \nu)  \geq \alpha > \frac{\varepsilon}{2}
\end{equation*}
for any $\nu \in \mathfrak{N}_m$. As a result, $\overline{B_{D_1}( \mu_j, \varepsilon/2 )} \cap \mathfrak{N}_m = \varnothing$, and thus, only $(k-1)$ closed balls of diameter $\varepsilon$ are enough to cover $\mathfrak{B} \cap \mathfrak{N}_m$. This contradicts the assumption $ \mathcal{M}_{\varepsilon} ( \mathfrak{B} \cap \mathfrak{N}_{m} ) =k $.

Since $\mu_1, \dots, \mu_k \in \mathfrak{N}_m$, for $j=1, \dots, m$, we write
\begin{equation*}
  \mu_j = \sum_{i=1}^{m} \delta_{x_{i}^{j}},
\end{equation*}
where $ x_{1}^{j}, \dots, x_{m}^{j} \in S $. Again, noting that $D_{1}( \mu_j, \nu ) \geq \alpha > \varepsilon/2$ for $\nu \notin \mathfrak{N}_m$, we obtain
\begin{equation*}
\overline{B_{D_1}( \mu_j, \varepsilon/2 )} = \left\{  \sum_{i=1}^{m} \delta_{y_i} : \min_{\pi \in \Pi_m } \sum_{i=1}^{m} \rho \left( x_{i}^{j}, y_{\pi(i)} \right) \leq  \frac{\varepsilon}{2} \right\}.
\end{equation*}
Therefore,
\begin{align*}
 \mathcal{L}(\eta) \left( \overline{B_{D_1}( \mu_j, \varepsilon/2 )} \right)
&= P\left( \eta \in \left\{  \sum_{i=1}^{m} \delta_{y_i} : \min_{\pi \in \Pi_m } \sum_{i=1}^{m} \rho \left( x_{i}^{j}, y_{\pi(i)} \right) \leq  \frac{\varepsilon}{2} \right\} \right)  \\
&= P\left( \eta \in \left\{  \sum_{i=1}^{m} \delta_{y_i} : \min_{\pi \in \Pi_m } \sum_{i=1}^{m} \rho \left( x_{i}^{j}, y_{\pi(i)} \right) \leq  \frac{\varepsilon}{2} \right\}, \, |\eta| =m \right).
\end{align*}
From the definition of $\eta^{(m)}$, we deduce that, given $|\eta| = m$,
\begin{equation*}
\eta^{(m)}\left( \left\{ (y_1, \dots, y_m): \min_{\pi \in \Pi_m } \sum_{i=1}^{m} \rho \left( x_{i}^{j}, y_{\pi(i)} \right) \leq  \frac{\varepsilon}{2} \right\} \right) = m!,
\end{equation*}
if $  \eta \in \left\{  \sum_{i=1}^{m} \delta_{y_i} : \min_{\pi \in \Pi_m } \sum_{i=1}^{m} \rho ( x_{i}^{j}, y_{\pi(i)} ) \leq  \varepsilon/2 \right\} $, and
\begin{equation*}
\eta^{(m)}\left( \left\{ (y_1, \dots, y_m): \min_{\pi \in \Pi_m } \sum_{i=1}^{m} \rho \left( x_{i}^{j}, y_{\pi(i)} \right) \leq  \frac{\varepsilon}{2} \right\} \right) = 0,
\end{equation*}
if $  \eta \notin \left\{  \sum_{i=1}^{m} \delta_{y_i} : \min_{\pi \in \Pi_m } \sum_{i=1}^{m} \rho ( x_{i}^{j}, y_{\pi(i)} ) \leq  \varepsilon/2 \right\} $. Therefore,
\begin{align*}
 \mathcal{L}(\eta) \left( \overline{B_{D_1}( \mu_j, \varepsilon/2 )} \right)
 &= P\left( \eta \in \left\{  \sum_{i=1}^{m} \delta_{y_i} : \min_{\pi \in \Pi_m } \sum_{i=1}^{m} \rho \left( x_{i}^{j}, y_{\pi(i)} \right) \leq  \frac{\varepsilon}{2} \right\}, \, |\eta| =m \right) \\
 &= \frac{1}{m!} E\left[ \mathds{1}_{\{ |\eta| =m \} } \, \eta^{(m)}\left( \left\{ (y_1, \dots, y_m): \min_{\pi \in \Pi_m } \sum_{i=1}^{m} \rho \left( x_{i}^{j}, y_{\pi(i)} \right) \leq  \frac{\varepsilon}{2} \right\} \right) \right] \\
 &= J_{\eta, m} \left( \left\{ (y_1, \dots, y_m): \min_{\pi \in \Pi_m } \sum_{i=1}^{m} \rho \left( x_{i}^{j}, y_{\pi(i)} \right) \leq  \frac{\varepsilon}{2} \right\} \right) \\
 &= P(|\eta =m|) \widetilde{J}_{\eta,m} \left( \left\{ (y_1, \dots, y_m): \min_{\pi \in \Pi_m } \sum_{i=1}^{m} \rho \left( x_{i}^{j}, y_{\pi(i)} \right) \leq  \frac{\varepsilon}{2} \right\} \right),
\end{align*}
where the last equality follows from the definition that $\widetilde{J}_{\eta, m} (\cdot) := J_{\eta, m}( \cdot )/  J_{\eta, m}( S^{m} )$ and the fact $ J_{\eta, m}( S^{m} ) = P( |\eta| =m )$. Note that
\begin{eqnarray*}
&&\left\{ (y_1, \dots, y_m): \min_{\pi \in \Pi_m } \sum_{i=1}^{m} \rho \left( x_{i}^{j}, y_{\pi(i)} \right) \leq  \frac{\varepsilon}{2} \right\} \\
&=& \left\{ (y_1, \dots, y_m): \min_{\pi \in \Pi_m } \sum_{i=1}^{m} \rho \left( x_{\pi(i)}^{j}, y_{i} \right) \leq  \frac{\varepsilon}{2} \right\} \\
&\subset& \bigcup_{ \pi \in \Pi_m } \left\{ (y_1, \dots, y_m):  \sum_{i=1}^{m} \rho \left( x_{\pi(i)}^{j}, y_{i} \right) \leq  \frac{\varepsilon}{2} \right\}  \\
&\subset& \bigcup_{ \pi \in \Pi_m } \left\{ y_1 : \rho\left( x_{\pi(1)}^{j}, y_{1} \right) \leq \frac{\varepsilon}{2} \right\} \times \cdots \times \left\{ y_m : \rho\left( x_{\pi(m)}^{j}, y_{m} \right) \leq \frac{\varepsilon}{2} \right\}  \\
&=& \bigcup_{ \pi \in \Pi_m } \overline{ B_{\rho}\left( x_{\pi(1)}^{j} , \varepsilon/2 \right)} \times \cdots \times \overline{ B_{\rho}\left( x_{\pi(m)}^{j} , \varepsilon/2 \right)}.
\end{eqnarray*}
Applying Assumption (A3) yields
\begin{eqnarray*}
&& \widetilde{J}_{\eta,m} \left( \left\{ (y_1, \dots, y_m): \min_{\pi \in \Pi_m } \sum_{i=1}^{m} \rho \left( x_{i}^{j}, y_{\pi(i)} \right) \leq  \frac{\varepsilon}{2} \right\} \right) \\
&\leq & \theta^{m} \left( \bigcup_{ \pi \in \Pi_m } \overline{ B_{\rho}\left( x_{\pi(1)}^{j} , \varepsilon/2 \right)} \times \cdots \times \overline{ B_{\rho}\left( x_{\pi(m)}^{j} , \varepsilon/2 \right)} \right) \\
&\leq& \sum_{ \pi \in \Pi_m } \theta\left( \overline{ B_{\rho}\left( x_{\pi(1)}^{j} , \varepsilon/2 \right)} \right) \cdots \theta\left( \overline{ B_{\rho}\left( x_{\pi(m)}^{j} , \varepsilon/2 \right)} \right) \\
&\leq& m! K_2^{m} \left( \varepsilon/2 \right)^{m\sigma},
\end{eqnarray*}
and hence,
\begin{equation*}
\mathcal{L}(\eta) \left( \overline{B_{D_1}( \mu_j, \varepsilon/2 )} \right) \leq m! K_2^{m} \left( \varepsilon/2 \right)^{m\sigma} P(|\eta =m|),
\end{equation*}
for all $j=1,\dots, m$.

On the other hand, since $\overline{B_{D_1}( \mu_1, \varepsilon/2 )}, \dots, \overline{B_{D_1}( \mu_k, \varepsilon/2 )} $ cover $\mathfrak{B} \cap \mathfrak{N}_m$, it follows by \eqref{eqlo:aboundfortaum} that
\begin{align*}
\frac{1}{2} P( |\eta| =m ) &\leq \mathcal{L}(\eta) ( \mathfrak{B} \cap \mathfrak{N}_m ) \leq \mathcal{L}(\eta) \left( \bigcup_{j=1}^{k} \overline{B_{D_1}( \mu_j, \varepsilon/2 )} \right) \\
&\leq \sum_{j=1}^{k} \mathcal{L}(\eta) \left(  \overline{B_{D_1}( \mu_j, \varepsilon/2 )} \right)
\leq k m! K_2^{m} \left( \varepsilon/2 \right)^{m\sigma} P(|\eta =m|).
\end{align*}
Therefore,
\begin{equation*}
\mathcal{M}_{\varepsilon}(\mathfrak{B}) \geq \mathcal{M}_{\varepsilon}(\mathfrak{B}\cap \mathfrak{N}_{m} ) = k \geq \frac{1}{ 2 m! K_2^{m} \left( \varepsilon/2 \right)^{m\sigma} }.
\end{equation*}
By the arbitrarity of $\mathfrak{B}$, it follows
\begin{equation*}
  \mathcal{M}_{\varepsilon}( \mathcal{L}(\eta), \tau_m ) \geq \frac{1}{ 2 m! K_2^{m} \left( \varepsilon/2 \right)^{m\sigma} }.
\end{equation*}

Finally, it follows by a straightforward calculation that
\begin{equation*}
 \mathcal{M}_{\varepsilon}( \mathcal{L}(\eta), \tau_m ) \geq \frac{1}{ 2 m! K_2^{m} \left( \varepsilon/2 \right)^{m\sigma} } \geq \varepsilon^{-m(\sigma -\kappa)}
\end{equation*}
for all $\varepsilon \leq \frac{  2^{\sigma/\kappa} }{K_2^{1/\kappa} (2 m!)^{1/(m \kappa)}}$. We conclude the proof.
\end{proof}

With the above preparation, we proceed to present the main result regrading the lower bound.

\begin{theorem}  \label{thm:lowerbound}
Let $\eta$ be a point process satisfying Assumption (A3). Then, for any $0 < \kappa < \sigma$, there exists an integer $N$, depending on $\sigma$, $\kappa$, $\alpha$, and $K_2$, such that for all $n \geq N$, 
\begin{equation*}
W_{p}^{p} \left( \widehat{\mathcal{L}}_{n}(\eta), \mathcal{L}(\eta) \right) \geq 2^{-(2p+1)} \sup_{\substack{ m: 1 \leq m \leq \left \lfloor ( \log n )^{2/3} \right\rfloor  \\ P( |\eta | =m) >0}} P( |\eta| =m ) n^{- \frac{p}{m(\sigma-\kappa)} }.
\end{equation*}
Consequently, for $n \geq N$,
\begin{equation*}
W_{p} \left( \widehat{\mathcal{L}}_{n}(\eta), \mathcal{L}(\eta) \right) \geq 2^{-2-1/p} \sup_{\substack{ m: 1 \leq m \leq \left \lfloor ( \log n )^{2/3} \right\rfloor  \\ P( |\eta | =m) >0}} \left( P( |\eta| =m ) \right)^{1/p}  n^{- \frac{1}{m(\sigma-\kappa)} }.
\end{equation*}
\end{theorem}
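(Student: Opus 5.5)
The proof combines Lemma~\ref{lm:coveringlowerbound} with Proposition~\ref{prop:sixthWeed}, applied separately for each admissible $m$, and then takes the supremum. Fix $0<\kappa<\sigma$ and an integer $m$ with $1\le m\le \lfloor (\log n)^{2/3}\rfloor$ and $P(|\eta|=m)>0$. Put $\tau_m=P(|\eta|=m)/2$, $t_m=m(\sigma-\kappa)$, and
\begin{equation*}
\varepsilon'_m=\frac{2^{\sigma/\kappa}}{K_2^{1/\kappa}(2\, m!)^{1/(m\kappa)}}\wedge\alpha .
\end{equation*}
Lemma~\ref{lm:coveringlowerbound} gives $\mathcal{M}_{\varepsilon}(\mathcal{L}(\eta),\tau_m)\ge \varepsilon^{-t_m}$ for all $\varepsilon\le\varepsilon'_m$. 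The empirical measure $\widehat{\mathcal{L}}_{n}(\eta)$ is supported on at most $n$ points of $\mathfrak{N}$, for every realization. So Proposition~\ref{prop:sixthWeed} yields, deterministically,
\begin{equation*}
W_p^p\bigl(\widehat{\mathcal{L}}_{n}(\eta),\mathcal{L}(\eta)\bigr)\ge \tau_m 4^{-p} n^{-p/t_m} = 2^{-(2p+1)}P(|\eta|=m)\,n^{-\frac{p}{m(\sigma-\kappa)}},
\end{equation*}
provided $n>(\varepsilon'_m)^{-t_m}$. Since the bound holds for each admissible $m$, taking the supremum gives the first display. Taking $p$-th roots, with $(2^{-(2p+1)})^{1/p}=2^{-2-1/p}$, gives the second.

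The main step is to find a single $N$, depending only on $\sigma,\kappa,\alpha,K_2$ and not on the law of $\eta$ beyond these, such that $n>(\varepsilon'_m)^{-m(\sigma-\kappa)}$ holds for every $m\le (\log n)^{2/3}$ once $n\ge N$. Write the requirement as
\begin{equation*}
\log n> m(\sigma-\kappa)\log(1/\varepsilon'_m).
\end{equation*}
The minimum in $\varepsilon'_m$ splits it into two constraints.

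The $\alpha$ part contributes $m(\sigma-\kappa)\log(1/\alpha)$ (or nothing if $\alpha\ge1$). This is $O((\log n)^{2/3})$, which is $o(\log n)$.

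The other part contributes
\begin{equation*}
m(\sigma-\kappa)\Bigl[\tfrac{1}{\kappa}\log K_2-\tfrac{\sigma}{\kappa}\log 2+\tfrac{1}{m\kappa}\log(2\,m!)\Bigr]
=\tfrac{\sigma-\kappa}{\kappa}\bigl[m\log K_2-m\sigma\log 2+\log 2+\log m!\bigr].
\end{equation*}
Using $\log m!\le m\log m$ and $m\le(\log n)^{2/3}$, this is at most $C(1+m\log m)\le C'(\log n)^{2/3}\log\log n$, which is again $o(\log n)$, with $C'$ depending only on $\sigma,\kappa,K_2$. This is exactly why the range of $m$ is cut off at $(\log n)^{2/3}$: it keeps the $\log m!$ term sublinear in $\log n$.

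Hence there is an $N=N(\sigma,\kappa,\alpha,K_2)$ such that both constraints hold uniformly over the admissible $m$ whenever $n\ge N$. I expect this uniformity estimate to be the only real obstacle; the rest is bookkeeping. Because Proposition~\ref{prop:sixthWeed} holds for any measure supported on at most $n$ points, the resulting lower bound holds almost surely (indeed for every realization) and hence also in expectation.
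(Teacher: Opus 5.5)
Your proposal is correct and follows the paper's proof essentially step for step. For each admissible $m$ you combine Lemma~\ref{lm:coveringlowerbound} with Proposition~\ref{prop:sixthWeed}, which applies to every realization because $\widehat{\mathcal{L}}_{n}(\eta)$ has at most $n$ atoms; you obtain a threshold $N(\sigma,\kappa,\alpha,K_2)$ that is uniform over $m\le(\log n)^{2/3}$ via $\log m!\le m\log m$; and you then take the supremum. Your log-form bookkeeping effectively uses $|\log K_2|$, which is slightly cleaner than the paper's display: when substituting $m\le(\log n)^{2/3}$, the paper implicitly assumes $\log K_2\ge 0$.
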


\begin{proof}
Since the second assertion is a direct result of the first, we only prove the first.

For any $0 < \kappa < \sigma$ and any $m \leq \left \lfloor ( \log n )^{2/3} \right\rfloor$,
\begin{eqnarray*}
&& \left( \frac{ 2^{\sigma/\kappa} }{ K_1^{1/\kappa} ( 2 m! )^{1/(m \kappa)} } \wedge \alpha \right)^{- m(\sigma -\kappa)} 
= \frac{  K_2^{m(\sigma-\kappa)/\kappa} 2^{(\sigma -\kappa)/\kappa} (m!)^{(\sigma -\kappa)/\kappa} }{ 2^{m\sigma(\sigma-\kappa)/\kappa}} \vee \alpha^{- m(\sigma-\kappa)}  \\
&\leq& \left( 2^{(\sigma-\kappa)/\kappa} K_2^{m(\sigma-\kappa)/\kappa} \left(m^{m}\right)^{(\sigma -\kappa)/\kappa} \right) \vee e^{ m |\log \alpha| (\sigma-\kappa) } \\
&=& \left( 2^{(\sigma-\kappa)/\kappa} e^{(\sigma -\kappa) m \log m /\kappa +  (\sigma -\kappa) m \log K_2/\kappa } \right) \vee e^{ m |\log \alpha| (\sigma-\kappa) } \\
&\leq& \left( 2^{(\sigma-\kappa)/\kappa} e^{ 2(\sigma -\kappa) (\log n)^{2/3} \log \log n /(3\kappa) +  (\sigma -\kappa) (\log n)^{2/3} \log K_2/\kappa } \right) \vee e^{  |\log \alpha| (\sigma-\kappa) (\log n)^{2/3} },
\end{eqnarray*}
where in the last inequality we have invoked the result $ m \leq \left \lfloor ( \log n )^{2/3} \right\rfloor \leq ( \log n )^{2/3} $. It is straightforward that the right-hand side of the last display is less then $e^{\log n}$ for all $n$ greater than or equal to some integer $N$, where $N$ depends on $\sigma$, $\kappa$, $\alpha$, and $K_2$. Therefore, we have
\begin{equation*}
  n > \left( \frac{ 2^{\sigma/\kappa} }{ K_1^{1/\kappa} ( 2 m! )^{1/(m \kappa)} } \wedge \alpha \right)^{- m(\sigma -\kappa)} 
\end{equation*}
for all $ 1 \leq  m \leq \left \lfloor ( \log n )^{2/3} \right\rfloor $. Applying Proposition \ref{prop:sixthWeed} with $\mu = \mathcal{L}(\eta)$, $\nu = \widehat{\mathcal{L}}_{n}(\eta)$, $\tau = \tau_m$, $t = m(\sigma-\kappa)$ and $ \varepsilon' = \frac{  2^{\sigma/\kappa} }{K_2^{1/\kappa} (2 m!)^{1/(m \kappa)}} \wedge \alpha $, together with Lemma \ref{lm:coveringlowerbound}, we obtain
\begin{align*}
W_{p}^{p} \left( \widehat{\mathcal{L}}_{n}(\eta), \mathcal{L}(\eta) \right) \geq \tau_{m} 4^{-p} n^{-\frac{p}{m(\sigma - \kappa)}} = 2^{-(2p+1)} P( |\eta| =m ) n^{-\frac{p}{m(\sigma - \kappa)}},
\end{align*}
for any $m$ satisfying $ 1 \leq  m \leq \left \lfloor ( \log n )^{2/3} \right\rfloor$ and $  P( |\eta| =m ) >0 $. Taking the supremum over $m$, the desired result follows.
\end{proof}

\begin{remark}
If we assume, in addition, that $P(|\eta| = m) \geq K_3 e^{-\lambda m}$ for all $m \in \mathbb{N}^+$, for some constant $K_3 > 0$, and that the $\sigma$ appearing in Assumption (A3) equals $\mathrm{dim}_{M}(S)$, then
\begin{equation*}
  \left( P( |\eta| =m ) \right)^{1/p}  n^{- \frac{1}{m(\sigma-\kappa)} } \geq K_{3}^{\frac{1}{p}} e^{- \frac{\lambda m}{p} } n^{- \frac{1}{m \left( \mathrm{dim}_{M}(S) -\kappa \right)} }.
\end{equation*}
For sufficiently large $n$, applying Theorem \ref{thm:lowerbound} and setting $ m = \left\lceil \sqrt{ p \log n / \left( \lambda \left(  \mathrm{dim}_{M}(S) -\kappa  \right) \right) } \right\rceil$, it follows after a routine calculation that
\begin{align*}
W_{p} \left( \widehat{\mathcal{L}}_{n}(\eta), \mathcal{L}(\eta) \right) &\geq 2^{-2- \frac{1}{p}} \sup_{\substack{ m: 1 \leq m \leq \left \lfloor ( \log n )^{2/3} \right\rfloor  \\ P( |\eta | =m) >0}} \left( P( |\eta| =m ) \right)^{1/p}  n^{- \frac{1}{m \left( \mathrm{dim}_{M}(S) -\kappa \right)} }   \\
&\geq 2^{-2- \frac{1}{p}} K_{3}^{\frac{1}{p} } \sup_{\substack{ m: 1 \leq m \leq \left \lfloor ( \log n )^{2/3} \right\rfloor  \\ P( |\eta | =m) >0}} e^{- \frac{\lambda m}{p} } n^{- \frac{1}{m \left( \mathrm{dim}_{M}(S) -\kappa \right)} } \\
& \geq 2^{-2- \frac{1}{p}} K_{3}^{\frac{1}{p} } e^{- \frac{ \lambda}{p}} \times e^{- 2 \sqrt{ \frac{ \lambda \log n }{ p \left( \mathrm{dim}_{M}(S) -\kappa \right)} }}.
\end{align*}
Comparing the last display with Theorem \ref{thm:upperbound} implies the rate of convergence we derived is nearly optimal.
\end{remark}

\subsection{Example: Poisson point processes}
Here, we will demonstrate the lower bound through a special case of Poisson point process. Let $\eta$ be a Poisson point process with intensity measure $\Lambda$ satisfying $0 < \Lambda(S) = \lambda < \infty$. We first show that the Poisson Point process $\eta$ satisfies Assumption (A3) under mild conditions. Note that the Janossy measure of a Poisson Point process with intensity measure $\Lambda$ is given by $J_{\eta, m}(\cdot) = \frac{e^{-\Lambda(S)}}{m!}\Lambda^{m}(\cdot)$. For more details, see \cite{Last_Penrose_2017}. Therefore, we have
\begin{equation*}
\widetilde{J}_{\eta, m} (C) = \frac{\Lambda^{m}(C)}{\Lambda^{m}(S^{m})},
\end{equation*}
for any Borel set $C \subseteq S^{m}$. Define the normalized intensity measure $\theta(\cdot):=\frac{\Lambda(\cdot)}{\Lambda(S)}$. Then, $\theta^{m}(C)=\frac{\Lambda^{m}(C)}{\Lambda^{m}(S)}$, which implies $\widetilde{J}_{\eta, m} = \theta^{m}$. This verifies the first part of Assumption (A3).

For the second part, additional conditions on $\Lambda$ and the space $S$ may be required. For example, suppose $S$ is a Euclidean space and $\Lambda(C) \leq K \mu(C)$ for any $C \subset S$, where $K$ is a fixed constant and $\mu$ denotes the Lebesgue measure. In this case, $\sigma$ in Assumption (A3) equals $\mathrm{dim}_{M}(S)$, and $K_2$ depends on $K$, $\lambda(S)$, and $\mathrm{dim}_{M}(S)$. By applying Theorem \ref{thm:lowerbound}, we have
\begin{align}
W_{p} \left( \widehat{\mathcal{L}}_{n}(\eta), \mathcal{L}(\eta) \right) &\geq 2^{-2-1/p} \sup_{\substack{ m: 1 \leq m \leq \left \lfloor ( \log n )^{2/3} \right\rfloor  \\ P( |\eta | =m) >0}} \left( P( |\eta| =m ) \right)^{1/p}  n^{- \frac{1}{m(\mathrm{dim}_{M}(S)-\kappa)} } \nonumber \\
&= c \sup_{\substack{ m: 1 \leq m \leq \left \lfloor ( \log n )^{2/3} \right\rfloor }} \left( \frac{e^{-\lambda} \lambda^{m}}{m!} \right)^{1/p}  n^{- \frac{1}{m(\mathrm{dim}_{M}(S)-\kappa)} }  \nonumber \\
&\geq c \sup_{\substack{ m: 1 \leq m \leq \left \lfloor ( \log n )^{2/3} \right\rfloor }} \left( e^{-\frac{1}{12}} \frac{1}{\sqrt{2\pi m }}\left(\frac{\lambda e}{m}\right)^m \right)^{1/p}  n^{- \frac{1}{m(\mathrm{dim}_{M}(S)-\kappa)} } \nonumber \\
&= c \sup_{\substack{ m: 1 \leq m \leq \left \lfloor ( \log n )^{2/3} \right\rfloor }} e^{-\frac{1}{2p}\log m + \frac{\log \lambda +1}{p} m}  e^{ - \frac{1}{p} m \log m -\frac{1}{m(\mathrm{dim}_{M}(S) - \kappa)} \log n},  \label{eqlow:theexponentexample}
\end{align}
where the last inequality follows from Robbins’ refinement of Stirling’s formula, and where the constant $c$ may vary from line to line.
By setting $m = \left\lfloor \sqrt{\frac{2p}{\mathrm{dim}_{M}(S) - \kappa} \frac{\log n}{\log \log n}} \right \rfloor$, we obtain that the leading term in the exponent in \eqref{eqlow:theexponentexample} is $-\sqrt{\frac{2}{p(\mathrm{dim}_{M}(S) - \kappa  )}} \sqrt{\log n \log \log n} $. Therefore, for sufficiently large $n$, we have
\begin{equation*}
W_{p} \left( \widehat{\mathcal{L}}_{n}(\eta), \mathcal{L}(\eta) \right) \geq c \, e^{- (1 + \chi)\sqrt{\frac{2}{p (\mathrm{dim}_{M}(S) - \kappa)}} \sqrt{\log n \log \log n}},
\end{equation*}
for some constant $0 < \chi <1$.

Comparing this rate with that obtained in Section \ref{subsec:exampleupperbound} shows that the rate of convergence we derived for the Poisson point process is nearly optimal.

\section{Concentration for the Wasserstein distance} \label{sec:concentration}
In this section, we establish concentration properties of the Wasserstein distance in the context of point processes, thereby providing guidelines for constructing rejection regions in hypothesis testing. A common approach to deriving concentration inequalities is through McDiarmid’s inequality. Let $X_1,\dots,X_n$ be independent random variables. Suppose $f:\mathcal{X}_1 \times \dots \times \mathcal{X}_n \to \mathbb{R}$ is a function satisfying the bounded differences property, that is, substituting the value of the $i$th coordinate $x_i$, while keeping all others fixed, changes the value of $f$ by at most $c_i$. More formally, for all $i \in \{1, \dots, n\}$, all $x_1 \in \mathcal{X}_1, \dots, x_n \in \mathcal{X}_n$, and all $x_i' \in \mathcal{X}_i$,
\[
\big|f(x_1,\dots,x_i,\dots,x_n)-f(x_1,\dots,x_i',\dots,x_n)\big|\le c_i.
\] 
Then, The McDiarmid’s inequality \cite{mcdiarmid1989method} states that, for any $\varepsilon > 0$,
\[
P\left(f(X_1,\dots,X_n)-E[f(X_1,\dots,X_n)] \geq \varepsilon \right)
\leq 2 \exp\left(-\frac{2\varepsilon^2}{\sum_{i=1}^n c_i^2}\right).
\]  

We first observe that the Wasserstein distance between the empirical measure and the distribution of the underlying point process satisfies the bounded differences property. Indeed, it follows from the triangle inequality together with the properties of the Wasserstein distance on point masses that
\begin{eqnarray}
&&\left|W_p \left(\frac{1}{n}\sum_{i=1}^{n}\delta_{\eta_i}, \mathcal{L}(\eta) \right) - W_p\left(   \frac{1}{n}\sum_{i\neq j}  \delta_{\eta_i}+ \frac{1}{n}\delta_{\eta_j^\prime} , \mathcal{L}(\eta)\right)\right| \nonumber  \\
&\leq& \left|W_p \left( \frac{1}{n}\sum_{i=1}^{n}\delta_{\eta_i}, \sum_{i\neq j} \frac{1}{n} \delta_{\eta_i}+  \frac{1}{n} \delta_{\eta_j^\prime} \right)    \right| \nonumber \\
&\leq& \left|n^{-1/p} D_{1}(\eta_j, \eta_j^{\prime})\right|.  \label{eqcon:boundforchangeone}
\end{eqnarray}

It is important to emphasize that the specific choice of metric on $\mathfrak{N}(S)$ can significantly affect the concentration properties. For instance, if we replace our metric $D_{1}$ with a bounded metric, the bounded differences of
$W_{p}(\widehat{\mathcal{L}}_{n}(\eta), \mathcal{L}(\eta))$ appearing in the last display
scale as $n^{-1/p}$. Consequently, a direct application of McDiarmid’s inequality yields a concentration rate of order $\exp(-n^{2/p-1})$ for $W_{p}(\widehat{\mathcal{L}}_{n}(\eta), \mathcal{L}(\eta))$, which is meaningful only if $p \in [1,2)$. However, since our metric $D_1$ is unbounded, an alternative approach is required to refine the results that follow directly from the standard McDiarmid’s inequality. A common strategy is to control the moments, for example, via a Bernstein-type moment condition. Nevertheless, the restriction on $p$ remains, and hence the subsequent results are applicable only when $1 \leq p < 2$.

Next, we begin with a Bernstein-type McDiarmid’s inequality, followed by our concentration results for the Wasserstein distance.

\begin{proposition}\label{prop:Bernstein}
Let $X_1, \dots, X_n$ be independent random elements with $X_i \in \mathcal{X}_i$, and let $X_i'$ denotes an independent copy of $X_i$. Define $X= (X_1, \dots, X_n)$ and $X^{\prime}_{(i)} = (X_1, \ldots, X_{i-1}, X_i', X_{i+1}, \ldots, X_n)$. Additionally, let $f : \prod_{i=1}^n \mathcal{X}_i \to \mathbb{R}$ satisfy $\mathbb{E}|f(X)| < \infty$. For each $i$, define the difference
\[
D_i := f(X) - f(X^{\prime}_{(i)}).
\]
Suppose that there exist constants $\sigma_i > 0$ and $M > 0$ such that for all integers $k \geq 2$,
\[
E\left(|D_i|^k \mid X_{-i} \right) \leq \frac{1}{2} \sigma_i^2 k! M^{k-2} \quad \text{a.s.},
\]
where $X_{-i} = ( X_1, \dots, X_{i-1}, X_{i+1}, \dots, X_n ) $ is the random vector obtain by removing $X_i$ from $X$.
Then for all $t > 0$, the following concentration inequality holds:
\[
P\left(f(X) - Ef(X) \geq t\right) \leq \exp\left(-\frac{t^2}{2\sigma^2 + 2tM}\right),
\]
where $\sigma^2 = \sum_{i=1}^n \sigma_i^2$.
\end{proposition}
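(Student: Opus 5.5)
We follow the standard martingale (Doob) decomposition combined with a Chernoff bound, as in the usual proof of McDiarmid's inequality, but with a Bernstein-type control of the increments in place of boundedness. Let $\mathcal{F}_k=\sigma(X_1,\dots,X_k)$, $\mathcal{F}_0$ trivial, and write
\[
f(X)-Ef(X)=\sum_{i=1}^n V_i,\qquad V_i:=E[f(X)\mid\mathcal{F}_i]-E[f(X)\mid\mathcal{F}_{i-1}].
\]
Since $X_i'$ is an independent copy of $X_i$ and the $X_j$ are independent, we have $E[f(X'_{(i)})\mid \mathcal{F}_i]=E[f(X)\mid\mathcal{F}_{i-1}]$. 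Hence $V_i=E[D_i\mid\mathcal{F}_i]$, which is the representation that links the martingale increments to the differences $D_i$.

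The key step is a conditional moment-generating-function bound for each increment. For $0<\lambda<1/M$ we aim to show
\[
E\big[e^{\lambda V_i}\mid\mathcal{F}_{i-1}\big]\le \exp\!\Big(\frac{\lambda^2\sigma_i^2}{2(1-\lambda M)}\Big).
\]
Conditional Jensen gives $e^{\lambda V_i}\le E[e^{\lambda D_i}\mid\mathcal{F}_i]$, so $E[e^{\lambda V_i}\mid\mathcal{F}_{i-1}]\le E[e^{\lambda D_i}\mid\mathcal{F}_{i-1}]$. Next, expand $e^{\lambda D_i}=1+\lambda D_i+\sum_{k\ge2}\lambda^kD_i^k/k!$. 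The linear term vanishes, because $E[D_i\mid\mathcal{F}_{i-1}]=0$: conditionally on $\mathcal{F}_{i-1}$ together with $(X_{i+1},\dots,X_n)$, the pair $(X_i,X_i')$ is exchangeable, so $D_i$ is conditionally antisymmetric. For the higher terms we condition on $X_{-i}$, which carries more information than $\mathcal{F}_{i-1}$, and apply the tower property with the hypothesis $E(|D_i|^k\mid X_{-i})\le\frac12\sigma_i^2k!M^{k-2}$. This gives
\[
\sum_{k\ge2}\frac{\lambda^k}{k!}\cdot\frac{\sigma_i^2k!M^{k-2}}{2}=\frac{\lambda^2\sigma_i^2}{2(1-\lambda M)},
\]
and $1+x\le e^x$ completes the step. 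The interchange of sum and expectation is justified by Fubini, since the series of absolute moments converges for $\lambda M<1$.

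Iterating the bound down the filtration yields $E e^{\lambda(f(X)-Ef(X))}\le\exp\big(\lambda^2\sigma^2/(2(1-\lambda M))\big)$. Markov's inequality then gives
\[
P(f(X)-Ef(X)\ge t)\le\exp\!\Big(-\lambda t+\frac{\lambda^2\sigma^2}{2(1-\lambda M)}\Big).
\]
Choosing $\lambda=t/(\sigma^2+tM)<1/M$ makes the exponent equal to $-\frac{t^2}{2(\sigma^2+tM)}$, which is exactly the claimed $\exp(-t^2/(2\sigma^2+2tM))$.

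The main obstacle is the conditional MGF step. We cannot apply the moment hypothesis directly to $V_i$; we have to pass through $D_i$ using Jensen, and we need the centering $E[D_i\mid\mathcal{F}_{i-1}]=0$, which relies on the exchangeability of $X_i$ and $X_i'$. Some care is also needed because the hypothesis is stated conditionally on $X_{-i}$ while the martingale uses $\mathcal{F}_{i-1}$; the tower property handles this, since $\mathcal{F}_{i-1}\subseteq\sigma(X_{-i})$.
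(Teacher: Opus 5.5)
Your proof is correct, but it takes a different route from the paper. The paper gives no derivation: it simply invokes Theorem 1.2A of \cite{Victor1999}, a Bernstein-type exponential inequality for martingale difference sequences under conditional moment conditions.

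You essentially reprove that martingale inequality from scratch. You bound the conditional MGF of each Doob increment by $\exp(\lambda^2\sigma_i^2/(2(1-\lambda M)))$, iterate down the filtration, and apply Chernoff with $\lambda=t/(\sigma^2+tM)$. Along the way you supply the reduction the citation leaves implicit. The hypothesis concerns $D_i$ conditionally on $X_{-i}$, whereas a martingale inequality needs control of the increments $V_i$ given $\mathcal{F}_{i-1}$. Three steps bridge exactly this gap: your identity $V_i=E[D_i\mid\mathcal{F}_i]$, conditional Jensen, and the tower property through $\mathcal{F}_{i-1}\subseteq\sigma(X_{-i})$. Equivalently, Jensen gives $E[|V_i|^k\mid\mathcal{F}_{i-1}]\le E[|D_i|^k\mid\mathcal{F}_{i-1}]\le\frac12\sigma_i^2k!M^{k-2}$, after which a martingale Bernstein bound applies.

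The trade-off is as follows. The citation is shorter and draws on a more general martingale result. Your version is self-contained, and it uses the moment hypothesis only through the conditional MGF estimate. It therefore works directly with the deterministic dominating constants $\sigma_i^2$ as stated, with no need to match the precise form of the cited theorem's conditions.

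One small simplification: the centering $E[D_i\mid\mathcal{F}_{i-1}]=0$ follows immediately from $E[V_i\mid\mathcal{F}_{i-1}]=0$ and $V_i=E[D_i\mid\mathcal{F}_i]$. The exchangeability argument is valid but not needed.
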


\begin{proof}
This result follows directly from Theorem 1.2A in \cite{Victor1999}; see also \cite{van1995exponential} for a related discussion.
\end{proof}

\begin{corollary}  \label{coro:concentration}
Suppose that the metric space $(S, \rho)$ is bounded and that Assumption (A2) holds. Then,
\begin{equation*}
P\left( W_{p} \left( \widehat{\mathcal{L}}_{n}(\eta), \mathcal{L}(\eta) \right) - E\left[ W_{p} \left( \widehat{\mathcal{L}}_{n}(\eta), \mathcal{L}(\eta) \right) \right] > \varepsilon \right) \leq e^{-\frac{\varepsilon^2 \lambda^3 }{ 16 K_1 e^{\lambda} \left( \mathrm{diam}(S) +\alpha \right)^2 n^{1- 2/p}  + 4 \lambda^2 \varepsilon \left( \mathrm{diam}(S) +\alpha \right) n^{- 1/p }  }}.
\end{equation*}
\end{corollary}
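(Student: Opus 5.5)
We apply Proposition~\ref{prop:Bernstein} with $X_i=\eta_i$ and $f(\eta_1,\dots,\eta_n)=W_p(\widehat{\mathcal{L}}_n(\eta),\mathcal{L}(\eta))$. The integrability $E|f(X)|<\infty$ comes from Theorem~\ref{thm:d1metric} and (A2): (A2) gives $\sum_i i^pP(|\eta|=i)<\infty$, hence $\mathcal{L}(\eta)\in\mathcal{P}_p(\mathfrak{N})$ by \eqref{pp_space}, and then $W_p(\widehat{\mathcal{L}}_n,\mathcal{L})\le W_p(\widehat{\mathcal{L}}_n,\delta_{\mathbf 0})+W_p(\delta_{\mathbf 0},\mathcal{L})$, which has finite mean.

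By \eqref{eqcon:boundforchangeone}, $|D_i|\le n^{-1/p}D_1(\eta_i,\eta_i')$. Theorem~\ref{thm:d1metric} then gives
\[
|D_i|\le n^{-1/p}(\mathrm{diam}(S)+\alpha)(|\eta_i|\vee|\eta_i'|)\le n^{-1/p}(\mathrm{diam}(S)+\alpha)(|\eta_i|+|\eta_i'|).
\]
This bound does not depend on $X_{-i}$, so the conditional moments are dominated by unconditional moments of $Z:=|\eta_i|\vee|\eta_i'|$. Writing $a:=n^{-1/p}(\mathrm{diam}(S)+\alpha)$, we need to bound $E[Z^k]\le 2E[|\eta|^k]$. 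Assumption (A2) yields
\[
E|\eta|^k\le K_1\sum_{m\ge0}m^ke^{-\lambda m}.
\]
Comparing the sum with $\int_0^\infty x^ke^{-\lambda x}dx=k!/\lambda^{k+1}$ gives a bound of the form $C\,K_1k!\,e^{\lambda}/\lambda^{k+1}$. The shift $m\le x+1$ on each unit interval is what produces the factor $e^{\lambda}$.

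Hence $E(|D_i|^k\mid X_{-i})\le 2K_1e^{\lambda}k!\,a^k\lambda^{-(k+1)}$, and we match this to the Bernstein form $\tfrac12\sigma_i^2k!M^{k-2}$. The natural choice is $M=a/\lambda$ (up to a factor 2) and $\sigma_i^2=4K_1e^{\lambda}a^2/\lambda^3$ (up to constants). Then $\sigma^2=\sum_i\sigma_i^2\asymp K_1e^{\lambda}(\mathrm{diam}(S)+\alpha)^2n^{1-2/p}/\lambda^3$ and $M\asymp(\mathrm{diam}(S)+\alpha)n^{-1/p}/\lambda$. Substituting into $\exp(-t^2/(2\sigma^2+2tM))$ and multiplying numerator and denominator by $\lambda^3$ yields the displayed exponent
\[
\varepsilon^2\lambda^3\big/\bigl(16K_1e^{\lambda}(\cdot)^2n^{1-2/p}+4\lambda^2\varepsilon(\cdot)n^{-1/p}\bigr).
\]

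The main obstacle is the constant bookkeeping in this moment bound, namely getting exactly $16$ and $4$. The factor $2$ from $E[(|\eta|\vee|\eta'|)^k]\le 2E|\eta|^k$ and the factor $2$ in $\sigma^2$ inside Proposition~\ref{prop:Bernstein} combine, and to absorb stray factors it may be necessary to take $M=2a/\lambda$ rather than $a/\lambda$. I would first carry out the sum-integral comparison carefully, using $m^ke^{-\lambda m}\le e^{\lambda}\int_m^{m+1}x^ke^{-\lambda x}dx$, and then choose $\sigma_i,M$ to land on the stated constants.
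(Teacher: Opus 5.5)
Your proposal is correct and follows the paper's route: Proposition~\ref{prop:Bernstein} applied to $f=W_p$, the bounded-difference estimate \eqref{eqcon:boundforchangeone} combined with Theorem~\ref{thm:d1metric}, independence of $(\eta_i,\eta_i')$ from $X_{-i}$ to drop the conditioning, and the (A2) sum--integral comparison giving $E|\eta|^k\le K_1e^{\lambda}k!\,\lambda^{-(k+1)}$. The constant bookkeeping you flag is not an obstacle. With $a=n^{-1/p}(\mathrm{diam}(S)+\alpha)$, your bound $(|\eta_i|\vee|\eta_i'|)^k\le|\eta_i|^k+|\eta_i'|^k$ and the choice $\sigma_i^2=4K_1e^{\lambda}a^2\lambda^{-3}$, $M=a/\lambda$ give the exponent with $8$ and $2$ in place of $16$ and $4$, which is stronger and implies the stated inequality. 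The paper instead uses the cruder $(|\eta_j|+|\eta_j'|)^k\le 2^{k-1}(|\eta_j|^k+|\eta_j'|^k)$ with $\sigma_i^2=8K_1e^{\lambda}a^2\lambda^{-3}$ and $M=2a/\lambda$, which lands exactly on the displayed constants.
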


\begin{proof}
As before, let $\eta_1, \dots, \eta_n$ be i.i.d. realizations of $\eta$, and let $ \widehat{\mathcal{L}}_{n}(\eta) $ denote the empirical measure $\sum_{i=1}^{n} \delta_{\eta_i}/n $. For each $i$, let $\eta_{i}'$ be an independent copy of $\eta_i$. Define $X=(\eta_1,\dots,\eta_n)$, $X_{-i} = (\eta_1,\dots, \eta_{i-1},   \eta_{i+1}, \dots,\eta_n)$, and $X_{(i)}^{\prime} = (\eta_1,\dots, \eta_{i-1},  \eta_i^{\prime}, \eta_{i+1}, \dots,\eta_n)$ for all $i=1,\dots, n$. Our goal is to apply Proposition \ref{prop:Bernstein} by substituting $\prod_{i=1}^{n} \mathcal{X}_i $ with $\mathfrak{N}^{n}(S)$ and $f(X)$ with $ W_{p} ( \widehat{\mathcal{L}}_{n}(\eta), \mathcal{L}(\eta) ) $, thereby obtaining concentration results for $ W_{p} ( \widehat{\mathcal{L}}_{n}(\eta), \mathcal{L}(\eta) ) $.

Taking conditional expectation on both sides of \eqref{eqcon:boundforchangeone}, we obtain
\begin{eqnarray*}
E\left[ \left| D_j  \right|^{k} \, \Big| \, X_{-j} \right]
\leq E\left[ n^{-k/p} \left| D_{1}(\eta_j, \eta_j^{\prime}) \right|^{k} \, \Big| \, X_{-j}  \right]
= E\left[ n^{-k/p} \left| D_{1}(\eta_j, \eta_j^{\prime}) \right|^{k}  \right],
\end{eqnarray*}
where the last equality follows by the independence between $(\eta_j,  \eta_{j}^{\prime})$ and $ X_{-j} $. Noting that
\begin{align*}
\left| D_{1}(\eta_j, \eta_j^{\prime}) \right|^{k} &\leq \left(  \left(|\eta_j| \vee |\eta_j^{\prime}|\right)  \left( \mathrm{diam}(S) +\alpha \right)\right)^{k}
\leq \left(  \left(|\eta_j| + |\eta_j^{\prime}|\right)  \left( \mathrm{diam}(S) +\alpha \right)\right)^{k} \\
&\leq 2^{k-1} \left( \mathrm{diam}(S) +\alpha \right)^{k}\left( |\eta_j|^{k} + |\eta_j^{\prime}|^{k} \right),
\end{align*}
it follows that
\begin{align*}
E\left[ \left| D_j  \right|^{k} \, \Big| \, X_{-j} \right] &\leq n^{-k/p} 2^{k-1} \left( \mathrm{diam}(S) +\alpha \right)^{k} \left( E\left[ |\eta_j|^{k} \right]  +  E\left[ |\eta_j^{\prime}|^{k} \right] \right) \\
& = n^{-k/p} 2^{k} \left( \mathrm{diam}(S) +\alpha \right)^{k} E\left[ |\eta_j|^{k} \right].
\end{align*}
Furthermore, by Assumption (A2), we have
\begin{align*}
E\left[ |\eta_j|^{k} \right] &= \sum_{m=0}^{\infty} m^{k} P\left( |\eta_{j}| =m \right) \leq \sum_{m=0}^{\infty} m^{k} \times K_1 e^{-\lambda m} =  K_1 \sum_{m=1}^{\infty} m^{k}  e^{-\lambda m} \\
& \leq K_1 \int_{1}^{\infty} x^{k} e^{-\lambda (x-1) } \, dx  \leq K_1 e^{\lambda} \int_{0}^{\infty} x^{k} e^{-\lambda x}\, dx = K_1 e^{\lambda} k! \lambda^{-(k+1)}.
\end{align*}
Combining the last two displays, it follows that
\begin{align*}
E\left[ \left| D_j  \right|^{k} \, \Big| \, X_{-j} \right] &\leq K_1 e^{\lambda} k!  n^{-k/p} 2^{k} \left( \mathrm{diam}(S) +\alpha \right)^{k} \lambda^{-(k+1)} \\
&= \frac{1}{2} \left(  8 K_1 e^{\lambda} \left( \mathrm{diam}(S) +\alpha \right)^2 \lambda^{-3}  n^{-2/p}  \right) \times k! \times \left( 2 \left( \mathrm{diam}(S) +\alpha \right) \lambda^{-1}  n^{-1/p} \right)^{k-2}.
\end{align*}
Hence, the desired result follows directly from Proposition \ref{prop:Bernstein} after a suitable rearrangement.
\end{proof}

\begin{remark}
In the proof of Corollary \ref{coro:concentration}, if we apply Proposition \ref{prop:Bernstein} with $f(X) = - W_{p} ( \widehat{\mathcal{L}}_{n}(\eta), \mathcal{L}(\eta) )$, we obtain
\begin{equation*}
P\left( W_{p} \left( \widehat{\mathcal{L}}_{n}(\eta), \mathcal{L}(\eta) \right) - E\left[ W_{p} \left( \widehat{\mathcal{L}}_{n}(\eta), \mathcal{L}(\eta) \right) \right] <- \varepsilon \right) \leq e^{-\frac{\varepsilon^2 \lambda^3 }{ 16 K_1 e^{\lambda} \left( \mathrm{diam}(S) +\alpha \right)^2 n^{1- 2/p}  + 4 \lambda^2 \varepsilon \left( \mathrm{diam}(S) +\alpha \right) n^{- 1/p }  }}.
\end{equation*}
Therefore, it follows by the union bound that
\begin{equation*}
P\left( W_{p} \left( \left| \widehat{\mathcal{L}}_{n}(\eta), \mathcal{L}(\eta) \right) - E\left[ W_{p} \left( \widehat{\mathcal{L}}_{n}(\eta), \mathcal{L}(\eta) \right) \right| \right] > \varepsilon \right) \leq 2 e^{-\frac{\varepsilon^2 \lambda^3 }{ 16 K_1 e^{\lambda} \left( \mathrm{diam}(S) +\alpha \right)^2 n^{1- 2/p}  + 4 \lambda^2 \varepsilon \left( \mathrm{diam}(S) +\alpha \right) n^{- 1/p }  }}.
\end{equation*}
Together with our bounds for $E[W_{p}(\widehat{\mathcal{L}}_{n}(\eta), \mathcal{L}(\eta))]$, we are able to construct rejection regions for hypothesis testing.
\end{remark}

\section{Some applications} \label{sec:applications}
We consider two applications of our theoretical results, including an approximation of Campbell measures and a rule for determining sample sizes for generative models.

\subsection{Approximation of the Campbell measure}
The Campbell measure is a fundamental and essential tool in the theory of point processes, particularly in the context of Palm theory \cite{Baccelli2020RandomMP}. Here, we demonstrate how our results can be used to approximate the Campbell measure.

Let $\eta$ be a point process on $S$. The Campbell measure is a measure on the product space $S \times \mathfrak{N}(S)$, commonly defined by
\begin{equation*}
C_{\eta}(f) = E \left[ \int_{S} f(s,\eta) \eta(ds) \right].   
\end{equation*}
for any nonnegative measurable function $f: S \times \mathfrak{N}(S) \rightarrow \bar{\mathbb{R}}_{+}$. Note that the integral inside the expectation can be regarded as a function of $\eta$. Denote $g_{f}(\eta) = \int_{S} f(s,\eta) \eta(ds)$, so that $g_{f}$ is a function on $\mathfrak{N}(S)$ and
\begin{equation*}
C_{\eta}(f) = E[g_{f}(\eta)] = \int_{\mathfrak{N}(S)} g_{f}(\mu) d \mathcal{L}(\eta) (\mu).
\end{equation*}
Suppose that $g_{f}$ is a Lipschitz function with Lipschitz constant $1$, that is, $|g_{f}(\mu) - g_{f}(\nu)| \le D_1(\mu,\nu)$. Then, by invoking the duality of the Wasserstein distance and Theorem \ref{thm:upperbound}, one can justify the approximation of the Campbell measure using i.i.d. realizations of $\eta$.

Before proceeding to the details, we state without proof a sufficient condition under which $g_{f}$ is Lipschitz. Extend $f$ to $\tilde{f}$ on $\left( S \cup \{s_{\alpha}\} \right) \times \mathfrak{N}(S) $ by setting $\tilde{f}(s_{\alpha}, \mu) = 0$ for all $\mu \in \mathfrak{N}(S)$. If $\tilde{f}$ is a Lipschitz function with compact support, then $g_{f}$ is also Lipschitz, with a Lipschitz constant depending on both the Lipschitz constant of $\tilde{f}$ and the size of its support. Moreover, if $S$ possesses suitable topological properties---for example, if $S$ is a compact subset of Euclidean space---then the condition reduces to requiring that $f$ itself be Lipschitz with compact support. In this case, such a collection of functions suffices to determine the measure on $S \times \mathfrak{N}(S)$. 

We then turn to the duality of the Wasserstein distance. We use $\mathcal{P}_{1} (\mathfrak{N}) $ to denote the space of probability measures on $\mathfrak{N}$ with finite first moment, that is, if $ \mathcal{L}(\eta) \in \mathcal{P}_{1} ( \mathfrak{N} ) $, then $\int_{\mathfrak{N}}  D_{1}( \mu, \mu_0 ) \, d\mathcal{L}(\eta) (\mu) < \infty$. In the setting of point processes, the duality can be formulated as follows: for any $\mathcal{L}(\eta), \mathcal{L}(\xi) \in \mathcal{P}_1(\mathfrak{N})$,
\begin{equation*}
W_1(\mathcal{L}(\eta),\mathcal{L}(\xi)) = \sup_{ \left\| h \right\| _{\mathrm{Lip} } \leq 1 }\left|\int_{\mathfrak{N}(S)} h(\mu) d\mathcal{L}(\eta)(\mu) - \int_{\mathfrak{N}(S)} h(\mu) d\mathcal{L}(\xi)(\mu)\right|,
\end{equation*}
where $\| \cdot\|_{\mathrm{Lip}}$ represents the Lipschitz constant. If $\eta$ satisfies Assumption (A2), then the condition of equation (\ref{pp_space}) holds with $p=1$, implying that $\mathcal{L}(\eta) \in \mathcal{P}_{1}(\mathfrak{N})$. Moreover, it is immediate that $\widehat{\mathcal{L}}_{n}(\eta) \in \mathcal{P}_{1}(\mathfrak{N})$ almost surely. Therefore, for all $f$ such that $ \|g_{f}\|_{\mathrm{Lip}} \leq 1 $, it follows by the duality of the Wasserstein distance that
\begin{eqnarray*}
&& \left| C_{\eta}(f) - \frac{1}{n} \sum_{i=1}^{n} \int_{S} f\left( s, \eta_i \right) \, \eta_{i}(ds)  \right| 
= \left| \int_{\mathfrak{N}(S)} g_{f}(\mu) d\mathcal{L}(\eta)(\mu) - \frac{1}{n} \sum_{i=1}^{n} g_{f}(\eta_i) \right| \\
&=&\left| \int_{\mathfrak{N}(S)} g_{f}(\mu) d\mathcal{L}(\eta)(\mu) - \int_{\mathfrak{N}(S)} g_{f}(\mu) d\widehat{\mathcal{L}}_{n}(\eta)(\mu) \right| \\
&\leq& \sup_{\|h\|_{\mathrm{Lip} \leq 1}}\left| \int_{\mathfrak{N}(S)} h(\mu) d\mathcal{L}(\eta)(\mu) - \int_{\mathfrak{N}(S)} h(\mu) d\widehat{\mathcal{L}}_{n}(\eta)(\mu) \right| \\
&=& W_{1} \left(\mathcal{L}(\eta),\widehat{\mathcal{L}}_{n}(\eta)\right).
\end{eqnarray*}
Applying Corollary \ref{coro:concentration} with $\varepsilon = e^{ -2 \sqrt{\lambda/(\mathrm{dim}_{M}(S) + 2\kappa)} \sqrt{\log n} }$ and $p=1$, together with Theorem \ref{thm:upperbound}, we have, with probability at least $1 - e^{-c n e^{- 4 \sqrt{\lambda/(\mathrm{dim}_{M}(S) + 2\kappa)} \sqrt{\log n} } }$ for some constant $c$,
\begin{equation*}
W_{1} \left(\mathcal{L}(\eta),\widehat{\mathcal{L}}_{n}(\eta) \right) \leq C e^{  -2 \sqrt{\frac{\lambda}{ \mathrm{dim}_{M}(S) + 2\kappa }} \sqrt{\log n} },
\end{equation*}
and hence,
\begin{equation*}
\left| C_{\eta}(f) - \frac{1}{n} \sum_{i=1}^{n} \int_{S} f\left( s, \eta_i \right) \, \eta_{i}(ds)  \right|  \leq C e^{  -2 \sqrt{\frac{\lambda}{ \mathrm{dim}_{M}(S) + 2\kappa }} \sqrt{\log n} }.
\end{equation*}
In other words, the Campbell measure can be approximated using realizations of $\eta$. Furthermore, if $ \|g_{f}\|_{\mathrm{Lip}} > 1$, the above result remains applicable after an appropriate rescaling of $f$ so that $ \|g_{f}\|_{\mathrm{Lip}} \leq 1$.

\subsection{Generative point process models}
In \cite{xiao2017wasserstein}, a generative model was proposed for temporal point processes to produce realizations resembling those from the true underlying process. More concretely, the unknown underlying point process is $\eta$, while the generative model point process is the process obtained by applying a transformation $g_{\theta}$, parameterized by $\theta$, to a non-informative process, taken to be a homogeneous Poisson process $\xi$. The goal is to find an optimal transformation $g_{\theta}$ such that the distribution induced by the generative process
is close to the true distribution of the underlying process, which is done by minimizing the Wasserstein distance between the two distributions. Formally, the learning problem and its dual form is given by
\begin{equation*}
\min_{\theta} \ W_1(\mathcal{L}(\eta),\mathcal{L}(g_{\theta}(\xi))) = \min_{\theta} \ \sup_{ \left\| h \right\| _{\mathrm{Lip} } \leq 1 }\left|\int_{\mathfrak{N}(S)} h(\mu) d\mathcal{L}(\eta)(\mu) - \int_{\mathfrak{N}(S)} h(g_{\theta}(\mu)) d\mathcal{L}(\xi)(\mu)\right|,
\end{equation*}
where the second integral on the right-hand side follows from a standard change-of-variable argument.

Since the family of Lipschitz functions with constant 1 is too broad to optimize over directly, the authors approximate the Wasserstein distance by restricting to a parameterized family $\{h_w : w \in \mathcal{W}\}$: 
\begin{equation*}
W_1(\mathcal{L}(\eta),\mathcal{L}(g_{\theta}(\xi))) \approx \ \max_{w \in \mathcal{W}, \, \| h_w \|_{\text{Lip}} \leq 1} 
\left|\int_{\mathfrak{N}(S)} h_w(\mu) d\mathcal{L}(\eta)(\mu) - \int_{\mathfrak{N}(S)} h_w(g_{\theta}(\mu)) d\mathcal{L}(\xi)(\mu)\right|.
\end{equation*}
Because expectations cannot be computed exactly in practice, they are replaced by empirical averages. 
Given $n$ realizations $\eta_{1}, \dots, \eta_{n}$ from $\eta$ and $\xi_{1}, \dots, \xi_{n}$ from $\xi$, the authors consider the problem of solving
\begin{equation*}
\min_{\theta} \ W_1 \left(\widehat{\mathcal{L}}_{n}(\eta),\widehat{\mathcal{L}}_{n}(g_{\theta}(\xi)) \right)  \approx \min_{\theta} \ \max_{w \in \mathcal{W}, \, \| h_w \|_{\text{Lip}} \leq 1} 
\sum_{i=1}^{n}  h_w(\eta_{i})  
- \sum_{i=1}^{n}  h_w(g_{\theta}(\xi_{i}))  .
\end{equation*}
Provided that one has the optimal solution 
\[
\theta^{*} = \mathop{\mathrm{argmin}}_{\theta} 
\max_{w \in \mathcal{W}, \, \| h_w \|_{\text{Lip}} \leq 1} 
\sum_{i=1}^{n} h_w(\eta_{i}) - \sum_{i=1}^{n} h_w(g_{\theta}(\xi_{i})).
\]
Then, by triangle inequality, one has
\[W_1(\mathcal{L}(\eta),\mathcal{L}(g_{\theta^*}(\xi))) \leq W_1 \left(\widehat{\mathcal{L}}_{n}(\eta),\widehat{\mathcal{L}}_{n}(g_{\theta}(\xi)) \right) + W_1 \left( \mathcal{L}(g_{\theta^*}(\xi)),\widehat{\mathcal{L}}_{n}(g_{\theta}(\xi)) \right) + W_1 \left(\mathcal{L}(\eta),\widehat{\mathcal{L}}_{n}(\eta)\right).\]
Thus, magnitude of $W_1(\mathcal{L}(\eta), \mathcal{L}(g_{\theta^{*}}(\xi)))$ depends not only 
on the empirical Wasserstein distance 
\begin{equation*}
W_1 \left(\widehat{\mathcal{L}}_{n}(\eta),\widehat{\mathcal{L}}_{n}(g_{\theta}(\xi)) \right),
\end{equation*}
but also on two additional error terms arising from empirical approximation. If both the unknown process and the transformed Poisson process satisfy Assumption (A2), then Theorem \ref{thm:upperbound} establishes necessary conditions on the sample size. In particular, it specifies the minimum sample size required for the Wasserstein distance between the true distribution and the learned model to lie within a prescribed error bound.


\section*{Acknowledgments}
The authors would like to thank Professor Bojan Basrak and Professor Piotr Kokoszka for the valuable discussions.


\appendix
\section*{Appendix}
\addcontentsline{toc}{section}{Appendix}
\renewcommand{\thesubsection}{A.\arabic{subsection}}
\subsection{Proofs in Section~\ref{sec: ANewMetric}} \label{appendix1}
Here, we provide the proofs of  Theorem~\ref{thm:d1metric} and Proposition~\ref{topology1} stated in Section~\ref{sec: ANewMetric}.
\begin{proof}[Proof of Theorem~\ref{thm:d1metric}]


First, by definition, $D_1(\mu_1, \mu_1)=0$. Now, consider $\mu_1\neq \mu_2$. Note that, when $|\mu_1| \neq |\mu_2|$, we clearly have $D_{1}(\mu_1,\mu_2) > 0$. When $|\mu_1| = |\mu_2|$, the positivity of $D_1$ is inherited from that of $D_{w}$. In addition, the symmetry follows directly from the definition. 

To establish the triangle inequality, it suffices to show that $D_{1}(\mu_1, \mu_2) \leq D_{1}(\mu_1, \mu_3) + D_{1}(\mu_2, \mu_3)$ for any three counting measures $\mu_1, \mu_2$ and $\mu_3$. The cardinalities of $\mu_1$, $\mu_2$, and $\mu_3$ are denoted by $n$, $m$, and $l$, respectively.
For ease of exposition, we assume without loss of generality that $n \leq m \leq l$; the remaining cases follow by the same argument.

By (\ref{eqn:equaltyD1}), we have $D_{1}(\mu_1,\mu_2) = D_{w}(\mu_1 + \sum_{i=n+1}^{m}\delta_{s_{\alpha}}, \mu_{2})$. We first claim that $D_{w}(\mu_1 + \sum_{i=n+1}^{m}\delta_{s_{\alpha}}, \mu_{2}) = D_{w}(\mu_1 + \sum_{i=n+1}^{l}\delta_{s_{\alpha}}, \mu_{2}+\sum_{i=m+1}^{l}\delta_{s_{\alpha}})$. We briefly outline the proof as follows. Let $\pi$ be the optimal permutation of the left-hand side and $\pi^{\prime}$ be the optimal permutation of the right-hand side. 

To show $D_{w}(\mu_1 + \sum_{i=n+1}^{m}\delta_{s_{\alpha}}, \mu_{2}) \ge D_{w}(\mu_1 + \sum_{i=n+1}^{l}\delta_{s_{\alpha}}, \mu_{2}+\sum_{i=m+1}^{l}\delta_{s_{\alpha}})$, one can construct a valid permutation for the right-hand side by extending $\pi$, pairing each additional $s_{\alpha}$ with another $s_{\alpha}$. Such pairing contributes zero value to $D_w$. Hence the value obtained from this extended permutation equals that of the left-hand side. Since the right-hand side is defined as the minimum over all valid permutations, it cannot be larger than the left-hand side. 

Next, we show $D_{w}(\mu_1 + \sum_{i=n+1}^{m}\delta_{s_{\alpha}}, \mu_{2}) \le D_{w}(\mu_1 + \sum_{i=n+1}^{l}\delta_{s_{\alpha}}, \mu_{2}+\sum_{i=m+1}^{l}\delta_{s_{\alpha}})$. Suppose $\pi^{\prime}$ pairs some $s_\alpha$ with an element $x$ in the support of $\mu_1$, then it must pairs some $y$ in the support of $\mu_2$ with 
another $s_\alpha$. Therefore, swapping to pair $s_{\alpha}$ to $s_{\alpha}$ and $x$ to $y$ will produce a smaller value since $\rho(s_{\alpha},s_{\alpha}) + \rho(x,y) \le \rho(s_{\alpha},x) + \rho(s_{\alpha},y)$. Thus, in optimal matching $\pi^{\prime}$, all $s_{\alpha}$ must be matched with another $s_{\alpha}$ whenever possible. The remaining terms then form a valid permutation for the left-hand side. Again, since pairing  $s_{\alpha}$ with each other does not increase the value and the left-hand side is defined as the minimum over all permutations, the result follows. Thus, $D_{1}(\mu_1,\mu_2) = D_{w}(\mu_1 + \sum_{i=n+1}^{l}\delta_{s_{\alpha}}, \mu_{2}+\sum_{i=m+1}^{l}\delta_{s_{\alpha}})$.

By the triangle inequality of $D_{w}$, we have $D_{w}(\mu_1 + \sum_{i=n+1}^{l}\delta_{s_{\alpha}}, \mu_{2}+\sum_{i=m+1}^{l}\delta_{s_{\alpha}}) \le D_{w}(\mu_1 + \sum_{i=n+1}^{l}\delta_{s_{\alpha}}, \mu_{3}) + D_{w}(\mu_{2}+\sum_{i=m+1}^{l}\delta_{s_{\alpha}},\mu_{3}) = D_{1}(\mu_{1},\mu_{3}) + D_{1}(\mu_{2},\mu_{3})$, where the last equality again uses the equality~\eqref{eqn:equaltyD1}.
 The triangle inequality for $D_{1}$ follows by combining the above arguments. Thus far, we have established all properties required for $D_1$ to be a valid metric.
 
Finally, we prove the upper bound of the metric. By symmetry, we may assume that $ |\mu_1| = n \leq |\mu_2| = m$. It follows by the definition of $D_1$ and $s_{\alpha}$ that
\begin{align*}
D_{1}(\mu_{1},\mu_{2}) &= \min_{\pi \in \Pi_{m}}\left[\sum_{i=1}^{n}\rho(x_i,y_{\pi(i)}) + \sum_{i=n+1}^{m}\rho(s_{\alpha},y_{\pi(i)})  \right]
\leq \sum_{i=1}^{n}\rho(x_i,y_{i}) + \sum_{i=n+1}^{m}\rho(s_{\alpha},y_{i})  \\
&\leq \sum_{i=1}^{n }\mathrm{diam}(S) + \sum_{i=n+1}^{m} \left(\rho(s_{\alpha},s^{*}) + \rho(s^{*},y_{i}) \right) 
\leq \sum_{i=1}^{n }\mathrm{diam}(S) + \sum_{i=n+1}^{m} \left( \alpha + \mathrm{diam}(S) \right)\\
& \leq  m \left( \mathrm{diam}(S) + \alpha \right)
 = \left( |\mu_1| \vee |\mu_2|  \right) \left( \mathrm{diam}(S) + \alpha \right),
\end{align*}
where $s^{*} \in S$ satisfies $\rho(s^{*},s_{\alpha}) = \alpha$. Note that $s^{*}$ is attainable due to the fact that $S$ is compact.
\end{proof}

\begin{proof}[Proof of Proposition~\ref{topology1}]
(i) We begin by introducing the relative Wasserstein distance $\tilde{D}_{w}$ as defined in \cite{schuhmacher2005}. For two finite counting measures $\mu_1$ and $\mu_2$, if $|\mu_1| = |\mu_2| \geq 1$, then $\tilde{D}_{w}(\mu_1, \mu_2)$ is defined as $D_{1}(\mu_1, \mu_2)/|\mu_1|$. In addition, we set $\tilde{D}_{w}(\mu_1, \mu_2) = 1$ if $|\mu_1| \neq |\mu_2|$, and $\tilde{D}_{w}(\mu_1, \mu_2) = 0$ if $|\mu_1| = |\mu_2| = 0$.

We now proceed with the proof. To establish the first part, it suffices to show that $\mu_n \rightarrow \mu$ vaguely (namely, $\int f \, d\mu_n  \rightarrow \int f \, d\mu$ for all continuous functions $f$ with compact support) is equivalent to $D_{1}( \mu_n, \mu ) \rightarrow 0$. Suppose first that $D_{1}(\mu_n, \mu) \to 0$. Then there exists $N$ such that $D_1(\mu_{n},\mu) < \alpha$ for all $n \geq N$. This implies that $|\mu_{n}| = |\mu|$ for all $n \geq N$, and therefore, $\tilde{D}_{w} ( \mu_n, \mu )= D_{1}( \mu_n, \mu )/|\mu| $. Hence $\tilde{D}_{w}(\mu_n, \mu) \to 0$. Conversely, by a similar argument, if $\tilde{D}_{w}(\mu_n, \mu) \to 0$, then $D_{1}(\mu_n, \mu) \to 0$. Thus, $\tilde{D}_{w}(\mu_n, \mu) \to 0$ if and only if $D_{1}(\mu_n, \mu) \to 0$. Finally, by Theorem 3.1.B in \cite{schuhmacher2005}, $\tilde{D}_{w}(\mu_n, \mu) \to 0$ is equivalent to $\mu_n \to \mu$ vaguely, which establishes the first part.

Since $D_1$ induces the vague topology $\mathcal{T}$, and the $\sigma$-algebra $\mathcal{N}_{\mathcal{T}}$ is generated by the vague topology, we conclude the second part of the statement.


(ii) Let $\{\mu_{n}\}_{n=1}^{\infty}$ be a sequence of counting measures on $S$ such that $|\mu_n| \leq m$. Then there exists an integer $N \leq m$ and a subsequence $\left\{\mu_{n_k}\right\}_{k=1}^{\infty}$ such that $|\mu_{n_k}| = N$ for all $k$. Accordingly, we may write $\mu_{n_{k}} = \sum_{i=1}^{N} \delta_{x_{n_{k},i}}$. Since $(S,\rho)$ is compact, the sequence $(x_{n_{k},1},...,x_{n_{k},N})$ admits a convergent subsequence, say converging to $(x_1,...,x_N)$. For notational convenience, we continue to denote the indices of this subsequence by $\{n_k\}_{k=1}^{\infty}$. Define $\mu = \sum_{i=1}^{N} \delta_{x_{i}}$. Since $\sum_{i=1}^{N} \rho(x_{n_{k},i}, x_i) \rightarrow 0$, we have $D_{1}(\mu_{n},\mu) = D_{w}(\mu_{n},\mu) = \min_{\pi \in \Pi_{N}} \sum_{i=1}^{N}\rho(x_{n_{k},i},x_{\pi(i)}) \rightarrow 0$.

(iii) By Proposition 2.6 of \citep{Xia2005}, $\mathfrak{N}(S)$ is Polish in its vague topology, and therefore, it follows by the first statement of Proposition \ref{topology1} that $(\mathfrak{N}(S), D_1)$ is separable. Now consider a Cauchy sequence $\mu_n$. There exists $N$ such that $D_{1}(\mu_{n},\mu_{m}) < \alpha$ for all $n,m \ge N$. Thus, we must have $|\mu_{n}| = |\mu_{m}|$ for all $n,m \ge N$. Completeness can be established by a similar argument to that in the second statement.
\end{proof}

\subsection{Useful existing results for estimating Wasserstein distances} \label{appendix2}
We restate Proposition 1 and Proposition 3 in \cite{weed2019sharp} for the ease of readability and for the completeness of proofs.
\begin{proposition}  \label{Prop:firstinWeed} (Proposition 1 in \cite{weed2019sharp})
Let $(X, D)$ be a compact metric space with $\mathrm{diam}(X) \leq 1$, and let $W_p$ denote the Wasserstein distance of order $p$ between probability measures on $(X, D)$. Let $\mu$ and $\nu$ be two probability measures supported on $X$. Suppose $k^{*} \in \{0\} \cup \mathbb{N}^{+}$, and let $\mathcal{A}_1, \dots, \mathcal{A}_{k^{*}}$ be a sequence of nested partitions of $X$ into measurable subsets such that, for each $1 \leq k \leq k^{*}$, $\sup_{F \in \mathcal{A}_{k}} \mathrm{diam}(F) \leq 3^{-k}$. Then
\begin{equation*}
W_{p}^{p}(\mu, \nu) \leq 3^{-k^{*} p} + \sum_{k=1}^{k^{*}} 3^{-(k-1)p} \sum_{F \in \mathcal{A}_{k}} | \mu(F) - \nu(F) |.
\end{equation*}
\end{proposition}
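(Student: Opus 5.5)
This is Proposition 1 of Weed and Bach, restated, so I would prove it by a multiscale transport construction in which mass is matched inside the finest cells it can be. First I would reduce to the case $\mu(F)>0$ for every cell, and make the partitions refine down to a trivial partition $\mathcal{A}_0=\{X\}$, whose single cell has diameter at most $1=3^{0}$ because $\mathrm{diam}(X)\le 1$. Since $W_p^p$ is jointly convex in its arguments, it is enough to exhibit one coupling $\gamma$ with cost bounded by the right-hand side.

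\textbf{Construction.} I would build $\gamma$ greedily from the finest level upward.
\begin{enumerate}[(i)]
\item At level $k^{*}$, for each cell $F\in\mathcal{A}_{k^*}$, couple the amount $\mu(F)\wedge\nu(F)$ of $\mu|_F$ with the same amount of $\nu|_F$ inside $F$, using proportional (product) couplings. Each matched pair lies in $F$, so the cost is at most $\mathrm{diam}(F)^p\le 3^{-k^*p}$ per unit mass. Summed over cells this gives at most $3^{-k^*p}$, since total mass is $1$.
\item The unmatched residual measures $\mu_{k^*},\nu_{k^*}$ satisfy $\mu_{k^*}(F)=(\mu(F)-\nu(F))_+$ and $\nu_{k^*}(F)=(\nu(F)-\mu(F))_+$.
\item Move up to level $k^*-1$. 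For each parent cell $G\in\mathcal{A}_{k^*-1}$, match as much of the residual mass inside $G$ as possible, at cost at most $\mathrm{diam}(G)^p\le 3^{-(k^*-1)p}$ per unit.
\item Iterate up to level $0$. There the residuals have equal total mass (both are probability measures), so everything gets matched.
\end{enumerate}

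\textbf{Accounting.} The key quantity is the total residual mass left unmatched after processing level $k$. I would show it equals $\frac12\sum_{F\in\mathcal{A}_k}|\mu(F)-\nu(F)|$. This holds because matching within a cell of level $k$ leaves exactly the positive or negative part of the imbalance $\mu(F)-\nu(F)$, and an imbalance of a union of children is the sum of the children's imbalances.

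That residual mass is then matched at level $k-1$ or coarser, so each unit costs at most $3^{-(k-1)p}$. Charging conservatively, the mass transported at level $k-1$ is at most the residual after level $k$. Summing over $k=1,\dots,k^*$ gives
\[
W_p^p(\mu,\nu)\le 3^{-k^*p}+\sum_{k=1}^{k^*}3^{-(k-1)p}\sum_{F\in\mathcal{A}_k}|\mu(F)-\nu(F)|.
\]
The factor $\tfrac12$ can be dropped, giving the stated constant.

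\textbf{Main obstacle.} The delicate step is the bookkeeping. Mass first matched at level $k-1$ must be charged to the residual at level $k$, not at level $k-1$. This needs the telescoping identity that the residual after level $k-1$ is at most the residual after level $k$, which holds by the triangle inequality on imbalances, together with nestedness of the partitions. I would check that the measure-theoretic gluing of the piecewise couplings is well defined by building it as a finite sum of product measures of normalized restrictions.
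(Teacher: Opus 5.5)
Your proof is correct and is essentially the argument Weed and Bach use for this result (the paper only restates it, with no proof of its own): greedy bottom-up matching by product couplings of normalized restrictions of the residuals on each cell, where the residual after level $k$ has mass $\frac12\sum_{F\in\mathcal{A}_k}|\mu(F)-\nu(F)|$ and is charged at most $3^{-(k-1)p}$ per unit, which even gives the bound with an extra factor $\frac12$ on the sum. Two small corrections. Exhibiting one coupling suffices simply because $W_p^p$ is an infimum over couplings; joint convexity plays no role. No reduction to $\mu(F)>0$ is needed, and it would not help anyway, since residuals can vanish at coarser levels; just set the coupling piece on a cell to zero whenever either residual mass there is zero.
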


\begin{proposition}  \label{prop:thirdinWeed} (Proposition 3 in \cite{weed2019sharp})
Let $(X, D)$ denote the same metric space as in Proposition~\ref{Prop:firstinWeed}. Then there exists a sequence of nested partitions $\{ \mathcal{A}_{k} \}_{1 \leq k \leq k^{*}}$ of $X$ into measurable subsets such that, for each $1 \leq k \leq k^{*}$, we have $\sup_{F \in \mathcal{A}_{k}} \mathrm{diam}(F) \leq 3^{-k}$ and $\mathrm{card}(\mathcal{A}_{k}) \leq \mathcal{M}_{3^{-(k+1)}}(X)$.
\end{proposition}

\subsection{Proofs of Lemmas~\ref{lm:resultaftertelescope} and \ref{lm:exactboundWassersteinxi}} \label{appendix3}

This subsection presents the proofs of Lemmas~\ref{lm:resultaftertelescope} and \ref{lm:exactboundWassersteinxi}. We begin with the proof of Lemma~\ref{lm:resultaftertelescope}.

\begin{proof}[Proof of Lemma~\ref{lm:resultaftertelescope}]
Let $\gamma_j$ denote the optimal coupling between $\widehat{\mathcal{L}}_{n}(\xi)^{\mathfrak{N}_j}$ and $\mathcal{L}(\xi)^{\mathfrak{N}_j}$ for each $j = 0, 1, 2, \dots, m$. The existence of such couplings is guaranteed by the completeness of $\mathfrak{N}$ with respect to the metric $D_1$ (see, for example, \cite[Theorem 11.8.2]{Dudley_2002}). Observe that both $\widehat{\mathcal{L}}_{n}(\xi)^{\mathfrak{N}_0}$ and $\mathcal{L}(\xi)^{\mathfrak{N}_0}$ are supported on $\mathfrak{N}_0$, which consists solely of the zero counting measure $\mathbf{0}$. Consequently, $\gamma_0$ is the Dirac measure concentrated at $\mathbf{0} \times \mathbf{0}$. Let $ K = \frac{1}{2} \sum_{j=0}^{m} | \widehat{\mathcal{L}}_{n}(\xi)(\mathfrak{N}_{j}) - \mathcal{L}(\xi)(\mathfrak{N}_{j}) | $. We then define
\begin{equation*}
\gamma = \sum_{j=0}^{m} \left( \widehat{\mathcal{L}}_{n}(\xi)(\mathfrak{N}_{j}) \wedge \mathcal{L}(\xi)(\mathfrak{N}_{j}) \right) \gamma_j + K^{-1} \mathds{1}_{\{ K \neq 0 \}} \,  H \times \Xi  ,
\end{equation*}
where
\begin{equation*}
H = \sum_{j=0}^{m} \left(  \widehat{\mathcal{L}}_{n}(\xi)(\mathfrak{N}_{j}) - \mathcal{L}(\xi)(\mathfrak{N}_{j}) \right)_{+} \widehat{\mathcal{L}}_{n}(\xi)^{ \mathfrak{N}_j }, \quad
\Xi = \sum_{j=0}^{m} \left(  \widehat{\mathcal{L}}_{n}(\xi)(\mathfrak{N}_{j}) - \mathcal{L}(\xi)(\mathfrak{N}_{j}) \right)_{-} \mathcal{L}(\xi)^{ \mathfrak{N}_j }.
\end{equation*}
It can be easily verified that $ H( \bigcup_{j=0}^{m} \mathfrak{N}_j ) = \Xi( \bigcup_{j=0}^{m} \mathfrak{N}_j ) = K $, and thus, $\gamma$ is a probability measure which is supported on $\left(\bigcup_{j=0}^{m} \mathfrak{N}_j\right) \times \left( \bigcup_{j=0}^{m} \mathfrak{N}_j \right) $ and whose marginal measures are $\widehat{\mathcal{L}}_{n}(\xi)  $
and $\mathcal{L}(\xi)$, respectively.

It follows by the definition of $W_{p}$ that
\begin{eqnarray*}
&&W_{p}^{p} ( \widehat{\mathcal{L}}_{n}(\xi), \mathcal{L}(\xi) )
\leq \int \int D_{1}^{p} ( \mu, \nu ) \, d\gamma(\mu, \nu) \\
&=& \sum_{j=0}^{m} \left( \widehat{\mathcal{L}}_{n}(\xi)(\mathfrak{N}_{j}) \wedge \mathcal{L}(\xi)(\mathfrak{N}_{j}) \right) \int \int D_{1}^{p} ( \mu, \nu ) \, d\gamma_{j}(\mu, \nu) 
 + K^{-1} \mathds{1}_{\{ K \neq 0 \}} \int \int D_{1}^{p} ( \mu, \nu ) \, dH(\mu) d \Xi(\nu).
\end{eqnarray*}
Since $ \gamma_0 = \delta_{ \mathbf{0} \times \mathbf{0} } $, we have $ \int \int D_{1}^{p} ( \mu, \nu ) \, d\gamma_{0}(\mu, \nu) =0 $. Therefore,
\begin{eqnarray*}
&& \sum_{j=0}^{m} \left( \widehat{\mathcal{L}}_{n}(\xi)(\mathfrak{N}_{j}) \wedge \mathcal{L}(\xi)(\mathfrak{N}_{j}) \right) \int \int D_{1}^{p} ( \mu, \nu ) \, d\gamma_{j}(\mu, \nu) \\
&=& \sum_{j=1}^{m} \left( \widehat{\mathcal{L}}_{n}(\xi)(\mathfrak{N}_{j}) \wedge \mathcal{L}(\xi)(\mathfrak{N}_{j}) \right) \int \int D_{1}^{p} ( \mu, \nu ) \, d\gamma_{j}(\mu, \nu) \\
&=& \sum_{j=1}^{m} \left( \widehat{\mathcal{L}}_{n}(\xi)(\mathfrak{N}_{j}) \wedge \mathcal{L}(\xi)(\mathfrak{N}_{j}) \right) W_{p}^{p}  \left( \widehat{\mathcal{L}}_{n}(\xi)^{\mathfrak{N}_j}, \mathcal{L}(\xi)^{\mathfrak{N}_j} \right) \\
&=& \sum_{j=1}^{m} \left( \widehat{\mathcal{L}}_{n}(\xi)(\mathfrak{N}_{j}) \wedge \mathcal{L}(\xi)(\mathfrak{N}_{j}) \right) W_{p}^{p} \left( \widehat{\mathcal{L}}_{n}(\xi)^{\mathfrak{N}_j}, \mathcal{L}(\xi)^{\mathfrak{N}_j} \right) \mathds{1}_{\{ \widehat{\mathcal{L}}_{n}(\xi)(\mathfrak{N}_j) >0 \}},
\end{eqnarray*}
where the second equality follows from the fact that $\gamma_j$ is the optimal coupling between $\widehat{\mathcal{L}}_{n}(\xi)^{\mathfrak{N}_j}$ and $\mathcal{L}(\xi)^{\mathfrak{N}_j}$, and the last equality holds because $\widehat{\mathcal{L}}_{n}(\xi)^{\mathfrak{N}_j} = \mathcal{L}(\xi)^{\mathfrak{N}_j}$ when $\widehat{\mathcal{L}}_{n}(\xi)(\mathfrak{N}_j) = 0$.

Furthermore, noting that $ D_{1}^{p} ( \mu, \nu ) \leq 2^{p} \left(D_{1}^{p} ( \mu, \mathbf{0} ) +  D_{1}^{p} ( \mathbf{0}, \nu )\right)$, it follows that
\begin{eqnarray*}
&& K^{-1} \mathds{1}_{\{ K \neq 0 \}} \int \int D_{1}^{p} ( \mu, \nu ) \, dH(\mu) d \Xi(\nu)  \\
&\leq& 2^{p}  K^{-1} \mathds{1}_{\{ K \neq 0 \}} \int \int \left(D_{1}^{p} ( \mu, \mathbf{0} ) +  D_{1}^{p} ( \mathbf{0}, \nu )\right) \, dH(\mu) d \Xi(\nu) \\
&=& 2^{p}  K^{-1} \mathds{1}_{\{ K \neq 0 \}} \left[  \int D_{1}^{p} ( \mu, \mathbf{0} ) \,  dH(\mu) \int  d \Xi(\nu) + \int dH(\mu) \int D_{1}^{p} ( \mathbf{0}, \nu ) \,  d \Xi(\nu) \right] \\
&\leq& 2^{p}  \left[ \int D_{1}^{p} ( \mu, \mathbf{0} ) \,  dH(\mu) + \int D_{1}^{p} ( \mathbf{0}, \nu )\ \,  d \Xi(\nu) \right] \\
&=& 2^{p} \sum_{j=0}^{m}  \left(  \widehat{\mathcal{L}}_{n}(\xi)(\mathfrak{N}_{j}) - \mathcal{L}(\xi)(\mathfrak{N}_{j}) \right)_{+} \int D_{1}^{p} ( \mu, \mathbf{0} ) \,  d\widehat{\mathcal{L}}_{n}(\xi)^{ \mathfrak{N}_j }(\mu) \\
&& + 2^{p} \sum_{j=0}^{m} \left(  \widehat{\mathcal{L}}_{n}(\xi)(\mathfrak{N}_{j}) - \mathcal{L}(\xi)(\mathfrak{N}_{j}) \right)_{-} \int \int D_{1}^{p} ( \mathbf{0}, \nu )  \, d\mathcal{L}(\xi)^{ \mathfrak{N}_j }(\nu) \\
&\leq& 2^{p} \left( \mathrm{diam}(S) + \alpha \right)^{p} \sum_{j=1}^{m} j^{p} \left[ \left(  \widehat{\mathcal{L}}_{n}(\xi)(\mathfrak{N}_{j}) - \mathcal{L}(\xi)(\mathfrak{N}_{j}) \right)_{+} + \left(  \widehat{\mathcal{L}}_{n}(\xi)(\mathfrak{N}_{j}) - \mathcal{L}(\xi)(\mathfrak{N}_{j}) \right)_{-} \right] \\
&=& 2^{p} \left( \mathrm{diam}(S) + \alpha \right)^{p} \sum_{j=1}^{m} j^{p} \left|\widehat{\mathcal{L}}_{n}(\xi)(\mathfrak{N}_{j}) - \mathcal{L}(\xi)(\mathfrak{N}_{j}) \right|,
\end{eqnarray*}
where the last inequality follows by the fact that $\widehat{\mathcal{L}}_{n}(\xi)^{ \mathfrak{N}_j }$ and $\mathcal{L}(\xi)^{ \mathfrak{N}_j }$ are supported on $ \mathfrak{N}_j $, the identity $  \mathfrak{N}_0 = \{ \mathbf{0} \} $, and the application of Theorem \ref{thm:d1metric}.
Combining the last three displays, the desired result follows.
\end{proof}

We proceed to prove Lemma~\ref{lm:exactboundWassersteinxi}.

\begin{proof}[Proof of Lemma~\ref{lm:exactboundWassersteinxi}]
The proof is similar to that of \cite{LeiJing2020Caco}.

For $j =1, \dots, m$, consider $\mathfrak{N}_j$ with the metric replaced by $D_{1}/ \left( j ( \mathrm{diam}(S) + \alpha ) \right)$. Then,\\
 $ \left(\mathfrak{N}_j,  D_{1}/ \left( j ( \mathrm{diam}(S) + \alpha ) \right) \right)$ satisfies the conditions in Proposition \ref{Prop:firstinWeed}. Therefore, applying Proposition \ref{prop:thirdinWeed}, there exists a sequence of nested partitions $\{ \mathcal{A}_{k}^{j} \}_{ 1 \leq k \leq k^{*}_{j} }$ of $\mathfrak{N}_j$ such that $ \mathrm{card}( \mathcal{A}_{k}^{j} ) \leq \mathcal{M}_{3^{-(k+1)}}( \mathfrak{N}_{j} ) $ and
\begin{equation*}
\sup_{\mathfrak{B} \in \mathcal{A}_{k}^{j} } \sup_{\mu, \nu \in \mathfrak{B}} \frac{ D_{1}( \mu, \nu) }{ j( \mathrm{diam}(S) + \alpha ) } \leq 3^{-k}.
\end{equation*}
For convenience of notation, we also use $\mathcal{A}_{0}^{j}$ to denote $\{ \mathfrak{N}_{j} \}$. It is obvious that $ \mathrm{card}( \mathcal{A}_{0}^{j} ) =1 \leq \mathcal{M}_{3^{-1}}( \mathfrak{N}_{j} )  $. With the above sequence of nested partitions, it follows by Proposition \ref{Prop:firstinWeed} that
\begin{eqnarray*}
&& W_{p}^{p} \left( \widehat{\mathcal{L}}_{n}(\xi)^{\mathfrak{N}_j}, \mathcal{L}(\xi)^{\mathfrak{N}_j} \right)
= \inf_{\gamma \in \Upsilon( \widehat{\mathcal{L}}_{n}(\xi), \mathcal{L}( \xi ) )} \int \int D_{1}^{p}(\mu, \nu ) \, d \gamma(\mu, \nu) \\
&=& j^{p} ( \mathrm{diam}(S) + \alpha )^{p} \inf_{\gamma \in \Upsilon( \widehat{\mathcal{L}}_{n}(\xi), \mathcal{L}( \xi ) )} \int \int  \left(\frac{D_{1}(\mu, \nu )}{j (\mathrm{diam}(S) + \alpha ) }\right)^{p}  \, d \gamma(\mu, \nu) \\
&\leq& j^{p} ( \mathrm{diam}(S) + \alpha )^{p} \left( 3^{-k_{j}^{*} p} + \sum_{k=1}^{k^{*}_{j}} 3^{-(k-1)p} \sum_{ \mathfrak{B} \in \mathcal{A}_{k}^{j} } \left| \widehat{\mathcal{L}}_{n}(\xi)^{ \mathfrak{N}_j } ( \mathfrak{B} ) - \mathcal{L}( \xi )^{ \mathfrak{N}_j } (\mathfrak{B}) \right|  \right).
\end{eqnarray*}
If $ \mathcal{L}(\xi)(\mathfrak{N}_j)>0 $, it follows by the definitions of $ \mathcal{L}(\xi)^{ \mathfrak{N}_j } $ and $ \widehat{\mathcal{L}}_{n}(\xi)^{ \mathfrak{N}_j } $ that
\begin{eqnarray*}
&& \sum_{ \mathfrak{B} \in \mathcal{A}_{k}^{j} } \left( \widehat{\mathcal{L}}_{n}(\xi)( \mathfrak{N}_j ) \wedge \mathcal{L}( \xi )( \mathfrak{N}_j ) \right) \left| \widehat{\mathcal{L}}_{n}(\xi)^{ \mathfrak{N}_j } (\mathfrak{B}) - \mathcal{L}( \xi )^{ \mathfrak{N}_j } (\mathfrak{B}) \right| \mathds{1}_{\{ \widehat{\mathcal{L}}_{n}(\xi)(\mathfrak{N}_j) >0 \}} \\
&\leq& \sum_{ \mathfrak{B} \in \mathcal{A}_{k}^{j} } \mathcal{L}( \xi )( \mathfrak{N}_j ) \left|  \frac{\widehat{\mathcal{L}}_{n}(\xi) ( \mathfrak{B} \cap \mathfrak{N}_j ) }{\widehat{\mathcal{L}}_{n}(\xi) ( \mathfrak{N}_j )}  -  \frac{\mathcal{L}( \xi ) ( \mathfrak{B} \cap \mathfrak{N}_j ) }{\mathcal{L}( \xi ) ( \mathfrak{N}_j )} \right|  \mathds{1}_{\{ \widehat{\mathcal{L}}_{n}(\xi)(\mathfrak{N}_j) >0 \}} \\
&=& \sum_{ \mathfrak{B} \in \mathcal{A}_{k}^{j} } \left|  \mathcal{L}( \xi )( \mathfrak{N}_j ) \frac{ \widehat{\mathcal{L}}_{n}(\xi) ( \mathfrak{B} \cap \mathfrak{N}_j )}{\widehat{\mathcal{L}}_{n}(\xi) ( \mathfrak{N}_j )}    -  \mathcal{L}( \xi ) ( \mathfrak{B} \cap \mathfrak{N}_j ) \right|  \mathds{1}_{\{ \widehat{\mathcal{L}}_{n}(\xi)(\mathfrak{N}_j) >0 \}} \\
&\leq &   \sum_{ \mathfrak{B} \in \mathcal{A}_{k}^{j} } \left| \mathcal{L}( \xi )( \mathfrak{N}_j ) - \widehat{\mathcal{L}}_{n}(\xi)( \mathfrak{N}_j )   \right|  \frac{ \widehat{\mathcal{L}}_{n}(\xi)( \mathfrak{B} \cap \mathfrak{N}_j ) }{ \widehat{\mathcal{L}}_{n}(\xi)( \mathfrak{N}_j ) } \mathds{1}_{\{ \widehat{\mathcal{L}}_{n}(\xi)(\mathfrak{N}_j) >0 \}} \\
&& + \sum_{ \mathfrak{B} \in \mathcal{A}_{k}^{j} } \left|    \widehat{\mathcal{L}}_{n}(\xi) ( \mathfrak{B} \cap \mathfrak{N}_j )  -  \mathcal{L}( \xi ) ( \mathfrak{B} \cap \mathfrak{N}_j ) \right|  \mathds{1}_{\{ \widehat{\mathcal{L}}_{n}(\xi)(\mathfrak{N}_j) >0 \}}  \\
&=&  \left| \mathcal{L}( \xi )( \mathfrak{N}_j ) - \widehat{\mathcal{L}}_{n}(\xi)( \mathfrak{N}_j )   \right|   \mathds{1}_{\{ \widehat{\mathcal{L}}_{n}(\xi)(\mathfrak{N}_j) >0 \}} 
+ \sum_{ \mathfrak{B} \in \mathcal{A}_{k}^{j} } \left|    \widehat{\mathcal{L}}_{n}(\xi) ( \mathfrak{B} \cap \mathfrak{N}_j )  -  \mathcal{L}( \xi ) ( \mathfrak{B} \cap \mathfrak{N}_j ) \right|  \mathds{1}_{\{ \widehat{\mathcal{L}}_{n}(\xi)(\mathfrak{N}_j) >0 \}}  \\
&\leq&  \left| \mathcal{L}( \xi )( \mathfrak{N}_j ) - \widehat{\mathcal{L}}_{n}(\xi)( \mathfrak{N}_j )   \right| + \sum_{ \mathfrak{B} \in \mathcal{A}_{k}^{j} } \left|    \widehat{\mathcal{L}}_{n}(\xi) ( \mathfrak{B} \cap \mathfrak{N}_j )  -  \mathcal{L}( \xi ) ( \mathfrak{B} \cap \mathfrak{N}_j ) \right|,
\end{eqnarray*}
where the last equality follows by the fact that $ \mathcal{A}_{k}^{j} $ is a partition of $\mathfrak{N}_{j}$. If $\mathcal{L}(\xi)(\mathfrak{N}_j)=0$, the above inequality holds automatically, since the left-hand side is $0$. Therefore, Combining the last two displays, it follows after a rearrangement that
\begin{eqnarray*}
&& \left( \widehat{\mathcal{L}}_{n}(\xi)( \mathfrak{N}_j ) \wedge \mathcal{L}( \xi )( \mathfrak{N}_j ) \right) W_{p}^{p} \left( \widehat{\mathcal{L}}_{n}(\xi)^{\mathfrak{N}_j}, \mathcal{L}(\xi)^{\mathfrak{N}_j} \right)  \mathds{1}_{\{ \widehat{\mathcal{L}}_{n}(\xi)(\mathfrak{N}_j) >0 \}}  \\
&\leq& j^{p} ( \mathrm{diam}(S) + \alpha )^{p} \Bigg[  3^{-k_{j}^{*} p} \left( \widehat{\mathcal{L}}_{n}(\xi)( \mathfrak{N}_j ) \wedge \mathcal{L}( \xi )( \mathfrak{N}_j ) \right)  \\
&& \qquad +  \sum_{k=1}^{k^{*}_{j}} 3^{-(k-1)p} \bigg(\left| \mathcal{L}( \xi )( \mathfrak{N}_j ) - \widehat{\mathcal{L}}_{n}(\xi)( \mathfrak{N}_j )   \right| + \sum_{ \mathfrak{B} \in \mathcal{A}_{k}^{j} } \left|    \widehat{\mathcal{L}}_{n}(\xi) ( \mathfrak{B} \cap \mathfrak{N}_j )  -  \mathcal{L}( \xi ) ( \mathfrak{B} \cap \mathfrak{N}_j ) \right|\bigg) \Bigg] \\
&\leq& j^{p} ( \mathrm{diam}(S) + \alpha )^{p} \Bigg[  3^{-k_{j}^{*} p}  \mathcal{L}( \xi )( \mathfrak{N}_j ) + \frac{1}{1- 3^{-p}} \left| \mathcal{L}( \xi )( \mathfrak{N}_j ) - \widehat{\mathcal{L}}_{n}(\xi)( \mathfrak{N}_j )   \right| \\
&& \qquad \qquad \qquad \qquad \qquad + 3^{p} \sum_{k=1}^{k^{*}_{j}} 3^{-kp} \sum_{ \mathfrak{B} \in \mathcal{A}_{k}^{j} } \left|    \widehat{\mathcal{L}}_{n}(\xi) ( \mathfrak{B} \cap \mathfrak{N}_j )  -  \mathcal{L}( \xi ) ( \mathfrak{B} \cap \mathfrak{N}_j ) \right| \Bigg].
\end{eqnarray*}
Combining the last display with Lemma \ref{lm:resultaftertelescope}, we have
\begin{align*}
& W_{p}^{p} ( \widehat{\mathcal{L}}_{n}(\xi), \mathcal{L}(\xi) ) \\
\leq &  ( \mathrm{diam}(S) + \alpha )^{p}  \sum_{j=1}^{m}  j^{p}\Bigg[  3^{-k_{j}^{*} p}  \mathcal{L}( \xi )( \mathfrak{N}_j ) + \left(\frac{1}{1- 3^{-p}} + 2^{p}\right) \left| \mathcal{L}( \xi )( \mathfrak{N}_j ) - \widehat{\mathcal{L}}_{n}(\xi)( \mathfrak{N}_j )   \right| \\
&  \qquad \qquad \qquad \qquad \qquad + 3^{p} \sum_{k=1}^{k^{*}_{j}} 3^{-kp} \sum_{ \mathfrak{B} \in \mathcal{A}_{k}^{j} } \left|    \widehat{\mathcal{L}}_{n}(\xi) ( \mathfrak{B} \cap \mathfrak{N}_j )  -  \mathcal{L}( \xi ) ( \mathfrak{B} \cap \mathfrak{N}_j ) \right| \Bigg] \\
\leq& 2 \times 3^{p} ( \mathrm{diam}(S) + \alpha )^{p} \sum_{j=1}^{m}  j^{p}\Bigg[  3^{-k_{j}^{*} p}  \mathcal{L}( \xi )( \mathfrak{N}_j ) +  \left| \mathcal{L}( \xi )( \mathfrak{N}_j ) - \widehat{\mathcal{L}}_{n}(\xi)( \mathfrak{N}_j )   \right| \\
&  \qquad \qquad \qquad \qquad \qquad \qquad +  \sum_{k=1}^{k^{*}_{j}} 3^{-kp} \sum_{ \mathfrak{B} \in \mathcal{A}_{k}^{j} } \left|    \widehat{\mathcal{L}}_{n}(\xi) ( \mathfrak{B} \cap \mathfrak{N}_j )  -  \mathcal{L}( \xi ) ( \mathfrak{B} \cap \mathfrak{N}_j ) \right| \Bigg] \\
=& 2 \times 3^{p} ( \mathrm{diam}(S) + \alpha )^{p} \sum_{j=1}^{m}  j^{p}\Bigg[  3^{-k_{j}^{*} p}  \mathcal{L}( \xi )( \mathfrak{N}_j ) + \sum_{k=0}^{k^{*}_{j}} 3^{-kp} \sum_{ \mathfrak{B} \in \mathcal{A}_{k}^{j} } \left|    \widehat{\mathcal{L}}_{n}(\xi) ( \mathfrak{B} \cap \mathfrak{N}_j )  -  \mathcal{L}( \xi ) ( \mathfrak{B} \cap \mathfrak{N}_j ) \right| \Bigg],
\end{align*}
where in the last equality, we applied the identity that $\mathcal{A}_{0}^{j} = \{ \mathfrak{N}_j \} $.

Finally, since $n \widehat{\mathcal{L}}_{n}(\xi) ( \mathfrak{B} \cap \mathfrak{N}_j )$ is a binomial random variable with parameters $ \mathcal{L}( \xi ) ( \mathfrak{B} \cap \mathfrak{N}_j ) $ and $n$, we have 
\begin{equation*}
E\left[ \left|  \widehat{\mathcal{L}}_{n}(\xi) ( \mathfrak{B} ) - \mathcal{L}( \xi ) ( \mathfrak{B} ) \right| \right] \leq \left( 2 \mathcal{L}( \xi ) ( \mathfrak{B} ) \right) \wedge \sqrt{ \mathcal{L}( \xi ) ( \mathfrak{B} )/n },
\end{equation*}
for all $\mathfrak{B} \in \mathcal{N}_{\mathcal{T}}$. Therefore,
\begin{align*}
& \sum_{ \mathfrak{B} \in \mathcal{A}_{k}^{j} } E \left[\left|    \widehat{\mathcal{L}}_{n}(\xi) ( \mathfrak{B} \cap \mathfrak{N}_j )  -  \mathcal{L}( \xi ) ( \mathfrak{B} \cap \mathfrak{N}_j ) \right|\right]
\leq  \sum_{ \mathfrak{B} \in \mathcal{A}_{k}^{j} }  \left( 2 \mathcal{L}( \xi ) ( \mathfrak{B} \cap \mathfrak{N}_j ) \right) \wedge \sqrt{ \mathcal{L}( \xi ) ( \mathfrak{B} \cap \mathfrak{N}_j )/n } \\
 \leq & \left( 2 \mathcal{L}( \xi ) ( \mathfrak{N}_j ) \right) \wedge \left( \sqrt{ \frac{ \mathrm{card}( \mathcal{A}_{k}^{j} ) \mathcal{L}( \xi ) ( \mathfrak{N}_j ) }{n} } \right)
 \leq  \left( 2 \mathcal{L}( \xi ) ( \mathfrak{N}_j ) \right) \wedge \left( \sqrt{ \frac{ \mathcal{M}_{3^{-(k+1)}}( \mathfrak{N}_{j} ) \mathcal{L}( \xi ) ( \mathfrak{N}_j ) }{n} } \right),
\end{align*}
where the second inequality follows by Cauchy-Schwarz inequality and the last inequality is a consequence of $ \mathrm{card}( \mathcal{A}_{k}^{j} ) \leq \mathcal{M}_{3^{-(k+1)}}( \mathfrak{N}_{j} ) $.

Combining the preceding display with the result given three displays above, we obtain the desired conclusion and thus complete the proof.
\end{proof}

\bibliographystyle{refstyle}
\bibliography{ref}

\end{document}